\title{On the multiplicity conjecture for non-Cohen-Macaulay simplicial complexes}
\author{Michael Goff}
\newtheorem{theorem}{Theorem}[section]
\newtheorem{proposition}[theorem]{Proposition}
\newtheorem{corollary}[theorem]{Corollary}
\newtheorem{lemma}[theorem]{Lemma}
\newtheorem{conjecture}[theorem]{Conjecture}
\newcommand{\K}{\Gamma}
\newcommand{\field}{{\bf k}}
\newcommand{\codim}{\mbox{\upshape codim}\,}
\newcommand{\Skel}{\mbox{\upshape Skel}\,}
\newcommand{\lk}{\mbox{\upshape lk}\,}
\newcommand{\st}{\mbox{\upshape st}\,}
\newcommand{\Hom}{\mbox{\upshape Hom}\,}
\newcommand{\Ext}{\mbox{\upshape Ext}\,}
\def\proof{\smallskip\noindent {\it Proof: \ }}
\def\proofof#1{\smallskip\noindent {\it Proof of #1: \ }}
\def\endproof{\hfill$\square$\medskip}
\begin{document}

\begin{abstract}
We prove a reformulation of the multiplicity upper bound conjecture and use that reformulation to prove it for three-dimensional simplicial complexes and homology manifolds with many vertices.  We provide necessary conditions for a Cohen-Macaulay complex with many vertices to have a pure minimal free resolution and a characterization of flag complexes whose minimal free resolution is pure.
\end{abstract}

\date{February 1, 2008}

\maketitle

\section{Introduction and preliminaries}
The Multiplicity Conjecture of Herzog, Huneke, and Srinivasan provides a powerful connection between combinatorics and commutative algebra.  Let us first review the conjecture.

Throughout the paper we consider the polynomial ring
   $S=\field[x_1, \ldots, x_n]$
over an arbitrary field $\field$. If $N$ is a finitely-generated graded $S$-module, 
then the ($\mathbb{Z}$-graded) \textit{Betti numbers} of $N$, 
$\beta_{i,j}=\beta_{i,j}(N)$,
   are the invariants that appear
in the minimal free resolution of $N$ as an $S$-module:
\[ 0 \rightarrow
\bigoplus_j S(-j)^{\beta_{l,j}} \rightarrow \ldots \rightarrow 
\bigoplus_j S(-j)^{\beta_{2,j}} \rightarrow \bigoplus_j 
S(-j)^{\beta_{1,j}} \rightarrow S \rightarrow S/I \rightarrow 0 \] 
In the above expression, $S(-j)$ denotes $S$ with grading shifted by $j$, and $l$ 
denotes the length of the resolution. In particular, $l \geq \codim(N)$.

Our main objects of study are the \textit{maximal and minimal shifts} 
in the resolution of $N$, defined by 
$M_i=M_i(N)=\max\{j : \beta_{i,j}\neq 0\}$ and 
$m_i=m_i(N)=\min \{j : \beta_{i,j}\neq 0\}$ for $i=1, \ldots, l$, respectively.
The following conjecture, due to Herzog, Huneke, and 
Srinivasan \cite{HerzSr98}, is known as the multiplicity conjecture.

\begin{conjecture}  \label{multiplicity-conj} 
Let $I\subset S$ be a homogeneous ideal of codimension $c$. 
Then the multiplicity of $N$, $e(N)$, satisfies the following upper bound:
$$ e(N) \leq ( \prod_{i=1}^{c} M_i )/c!.
$$
Moreover, if $N$ is Cohen-Macaulay, then also

$$ e(N) \geq ( \prod_{i=1}^{c} m_i )/c!.
$$
\end{conjecture}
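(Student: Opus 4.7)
The plan is to split the conjecture into its two inequalities and attack each by different techniques: a decomposition into pure resolutions for the Cohen--Macaulay lower bound, and a Hochster-formula reduction to combinatorics for the upper bound in the Stanley--Reisner setting that the paper focuses on.

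For the lower bound, I would first check the extremal case. If the resolution of $N$ is pure, so that $\beta_{i,j}\neq 0$ only when $j=m_i=M_i=d_i$, then the Herzog--K\"uhl equations force $e(N)=\prod_{i=1}^c d_i/c!$ and equality holds. For general Cohen--Macaulay $N$, I would try to decompose the Betti diagram as a positive rational combination of pure diagrams $\pi_{\mathbf{d}}$ with $d_i\geq m_i$. Because multiplicity is additive along such decompositions, each summand contributes at least $\prod m_i/c!$, and the bound follows. The hard step here is establishing that such a positive decomposition always exists, which is a structural statement about the cone of Betti diagrams of Cohen--Macaulay modules.

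For the upper bound, which must hold for all homogeneous $I$, I would restrict to Stanley--Reisner ideals $I=I_\K$. Hochster's formula expresses $\beta_{i,j}(S/I_\K)$ as a sum of dimensions of reduced homology groups $\tilde H_{j-i-1}(\K_W;\field)$ over induced subcomplexes with $|W|=j$, so $M_i$ is the largest vertex size of an induced subcomplex carrying nontrivial $(j-i-1)$-homology. The multiplicity $e(S/I_\K)$ counts top-dimensional facets, so the inequality becomes a combinatorial comparison between facet counts and homologically supported subsets. In restricted settings, such as low dimension or many vertices (where $\K_W$ retains the homotopy type of $\K$ for most $W$), the plan is to show that these supports are forced to be large enough to dominate the product $\prod M_i$.

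The main obstacle is the upper bound in its full generality. Unlike the Cohen--Macaulay case, there is no Herzog--K\"uhl relation pinning down $e(N)$, and controlling the $M_i$ is delicate because once the Cohen--Macaulay hypothesis is dropped the Betti diagram can take essentially arbitrary shape. Imposing either a rigid combinatorial structure (such as the homology manifold condition) or a large number of vertices seems essential to force enough of the $M_i$ to their maximal possible values, which is exactly the kind of reformulation the paper proposes to exploit.
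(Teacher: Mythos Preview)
The statement you are attempting is labeled a \emph{conjecture} in the paper, and the paper does not prove it in full; it establishes only special cases of the upper bound (three-dimensional complexes, homology manifolds with many vertices). So there is no ``paper's own proof'' of Conjecture~\ref{multiplicity-conj} to compare against.

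That said, your plan is well-calibrated to the actual status of the problem. Your lower-bound argument via a positive decomposition of the Betti table into pure diagrams is precisely the Boij--S\"oderberg route that Eisenbud and Schreyer carried out; the paper cites this as settling the Cohen--Macaulay case but does not reproduce it. The ``hard step'' you flag---existence of the positive decomposition---is exactly the content of their theorem, so your outline is correct but relies on a deep external result rather than on anything developed in the paper.

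For the upper bound, your reduction via Hochster's formula to a combinatorial statement about induced subcomplexes is the same starting point the paper uses. However, the paper does not proceed by directly comparing facet counts to ``homologically supported subsets'' as you sketch. Instead, it reformulates the upper bound as the nonnegativity of an explicit alternating sum $\sum_i c_i \sum_{j<i}(-1)^{i-j-1}A(j,Q_i)$ of averaged Betti numbers over induced subcomplexes of sizes equal to the upper skips $Q_i$ (Theorem~\ref{UBIEqiv}), obtained by solving a Cramer-type linear system built from the Euler--Poincar\'e relations. All of the paper's partial results flow from analyzing this specific expression. Your proposal never arrives at this reformulation, and your vague plan to show ``supports are forced to be large enough'' is not the mechanism the paper uses.

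The genuine gap is that your upper-bound sketch, as written, is not a proof of anything: you restrict to $I=I_\K$, which already abandons the general homogeneous case, and then you restrict further to ``low dimension or many vertices'' without giving any argument for either. You correctly diagnose that the non-Cohen--Macaulay upper bound is the obstacle and that extra structure is needed, but you stop at the diagnosis. The paper's actual contribution is the machinery (the skip reformulation, the shifting operations in Section~\ref{3DSection}, the link and Dehn--Sommerville estimates in Section~\ref{HomManifold}) that turns these restrictions into proofs.
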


For simplicity, we denote $(\prod_{i=1}^{c} M_i )/c!$ by $U(N)$ and $(\prod_{i=1}^{c} m_i )/c!$ by $L(N)$.

It is furthermore conjectured (\cite{HerzZheng, MiglNagRom2}) that if $e(N)$ attains the upper bound, or if $e(N)$ attains the lower bound and $N$ is Cohen-Macaulay, then $N$ has a \textit{pure resolution}, which means that $m_i = M_i$ for $1 \leq i \leq c$.

In their remarkable new paper, Eisenbud and Schreyer \cite{EisSch} prove the Boij-S\"{o}derberg conjecture \cite{BoijSo}, which in turn implies both bounds of the multiplicity conjecture in the case that $N$ is Cohen-Macaulay.

This conjecture was motivated by the following result due to 
Huneke and Miller \cite{HuMill}.  If $I$ is a homogeneous ideal of $S$ and if $S/I$ is 
Cohen-Macaulay and has a pure resolution, 
then $e(S/I)= (\prod_{i=1}^{c} m_i )/c!$. 
Starting with the  paper of Herzog and Srinivasan \cite{HerzSr98}, 
a tremendous amount of effort has been made in establishing 
Conjecture \ref{multiplicity-conj} for various classes of rings $S/I$.
In particular, the non-Cohen-Macaulay case of the conjecture was proved in the following cases: $I$ is a stable or squarefree strongly 
stable ideal \cite{HerzSr98}, 
$I$ is a codimension 2 ideal \cite{Gold03, HerzSr98, Rom05}, 
and $I$ is a codimension 3 Gorenstein ideal \cite{MiglNagRom}.

We investigate Conjecture \ref{multiplicity-conj} 
for squarefree monomial ideals or, equivalently, 
Stanley-Reisner ideals of simplicial complexes.  A \textit{simplicial complex} $\K$ is a collection of subsets, called \textit{faces}, of $[n]$, such that $\K$ is closed under inclusion and for all $i \in [n]$, $\{i\} \in \K$.  We will also refer to $[n]$ as $V(\K)$, or the \textit{vertex set} of $\K$.  If $F \in \K$, the \textit{dimension} of $F$ is $|F|-1$.  The \textit{dimension} of $\K$ is the largest dimension of its faces.

The link of a face $F$, denoted $\lk_\K(F)$, is defined by $\{G-F: F \subseteq G, G \in \K\}$.  If $W \subseteq V(\K)$, then $\K[W]$ denotes the \textit{induced subcomplex} on $W$.  $\K[W]$ has vertex set $W$, and $F \in \K[W]$ if $F \in \K$ and $F \subseteq W$.  If $v \in V(\K)$, we abbreviate $\K[V-\{v\}]$ by $\K-v$.  We say that $\K$ is $r$-\textit{neighborly} if every $r$ vertices of $\K$ form a face.  If $\K$ is a simplicial complex on the vertex set 
$[n]:=\{1, 2, \ldots, n\}$, then its \textit{Stanley-Reisner ideal} 
(or the \textit{face ideal}), $I_\K$, 
is the ideal generated by the squarefree monomials 
corresponding to non-faces of $\K$, that is, $$ I_\K = (
       x_{i_1}\cdots x_{i_k} \, : \, \{i_1<\cdots <i_k\}\notin \K
              ), $$
and the \textit{Stanley-Reisner ring} (or the \textit{face ring}) 
of $\K$ is $S/I_\K$ \cite{St96}.

We denote the reduced simplicial homology of $\K$ with coefficients in $\field$ by $\tilde{H}(\K; \field)$.  We use $\tilde{H}(\K)$ when $\field$ is implicit.  Also, $\tilde{\beta}_p(\K;\field) = \tilde{\beta}_p(\K) = \dim_\field \tilde{H}(\K; \field)$ denotes the $p$-th Betti number of $\K$.

Various combinatorial and topological invariants of $\K$ 
are encoded in the algebraic invariants of $I_\K$ 
and vice versa \cite{BrHerz, St96}.
The Krull dimension of $S/I_\K$, $\dim  S/I_\K$, 
and the topological dimension of $\K$, $\dim \K$, satisfy
$\dim  S/I_\K =\dim \K +1$ and so
 $$       
\codim(I_\K)=n-\dim\K-1.
$$
The Hilbert series of $S/I_\K$ is 
determined by knowing the number of faces in each dimension.  
Specifically, let  $f_i$  be the number of $i$-dimensional faces.
 By convention, the empty set is the unique face of dimension $-1$. 
 Then, 
$$
\sum^\infty_{i=0} \dim_\field (S/I_\K)_i \lambda^i = 
\frac{h_0 + h_1 \lambda +\dots + h_d \lambda^d}{(1-\lambda)^d},
$$ 
where $(S/I_\K)_i$ is the $i$-th graded component of 
$S/I_\K$, $d=\dim\K+1=\dim  S/I_\K$, and
 \begin{equation} \label{h-vector}
    h_i = \sum^i_{j=0} (-1)^{i-j} \binom{d-j}{d-i} f_{j-1}.
\end{equation}
The multiplicity $e(S/I_\K)$ is 
the number of $(d-1)$-dimensional faces of $\K$ which in
 turn is $h_0 + \dots + h_d.$ The minimal and maximal shifts 
have the following interpretation in terms of the reduced homology:
\begin{eqnarray}
\label{m-interpr} m_i(S/I_\K)&=&\min\{|W| \, : \, W\subseteq[n] \mbox{ and }
              \tilde{H}_{|W|-i-1}(\K[W]; \field)\neq 0\}. \\
\label{M-interpr} M_i(S/I_\K)&=&\max\{|W| \, : \, W\subseteq[n] \mbox{ and }
              \tilde{H}_{|W|-i-1}(\K[W]; \field)\neq 0\},
\end{eqnarray}
The above expressions for $m_i$ and $M_i$ follow easily from Hochster's 
formula on the Betti numbers $\beta_{i,j}(S/I_\K)$ 
\cite[Theorem II.4.8]{St96}. Thus, for face ideals,
 Conjecture \ref{multiplicity-conj} can be considered as 
a purely combinatorial-topological statement.  In the case of Stanley-Reisner rings, we refer to the multiplicity on simplicial complexes and write $L(\K) := L(S/I_\K)$ and $U(\K) := U(S/I_\K)$.

We say that a $(d-1)$-dimensional simplicial complex $\K$ with $n$ vertices is \textit{Cohen-Macaulay} if its Stanley-Reisner ring is Cohen-Macaulay.  Equivalently, by Reisner's criterion \cite{Hochs}, $\K$ is Cohen-Macaulay if the following condition holds: for all $F \in \K$, $\tilde{H}_p(\lk_\K(F); \field) = 0$ if $-1 \leq p < d-1-|F|$.  Equivalently, by Hochster's theorem \cite{Reisner}, for $W \subset V(\K)$, $\tilde{H}_p(\K[W]; \field) = 0$ when $|W| > n-d+1+p$.  Similarly, $\K$ is \textit{Gorenstein} if its Stanley-Reisner ring is Gorenstein.  

The \textit{simplicial join} of simplicial complexes $\K_1$ and $\K_2$, denoted by $\K_1 * \K_2$, is given by $V(\K_1 * \K_2) = V(\K_1) \coprod V(\K_2)$ and $F \in \K_1 * \K_2$ if $F \cap V(\K_1) \in \K_1$ and $F \cap V(\K_2) \in \K_2$.  Say that $\K$ is \textit{Gorenstein*} if $\K$ is Gorenstein and not a cone.  Every Gorenstein complex is the simplicial join of a Gorenstein* complex and a simplex.

In \cite{NovSw}, the multiplicity conjecture is established for matroid complexes, complexes of dimension at most $2$, and Gorenstein* complexes of dimension at most $4$.  Our work uses many of the same techniques as in \cite{NovSw}.  Our main results are the following.

\begin{itemize}
\item In Section \ref{3DSection}, we prove the upper bound part of Conjecture \ref{multiplicity-conj} 
for the Stanley-Reisner rings of three-dimensional simplicial complexes.
\item In Section \ref{HomManifold}, we prove the upper bound part of Conjecture \ref{multiplicity-conj} 
for the Stanley-Reisner rings of $(d-1)$-dimensional homology manifolds with sufficiently many vertices, where $d$ is odd (Theorem \ref{HomManOddD}) or $d \leq 10$ and the manifold is orientable over $\field$ (Theorem \ref{EvenD}).
\item In Theorem \ref{LLB}, we provide conditions under which a simplicial complex with sufficiently many vertices can attain the lower bound of Conjecture \ref{multiplicity-conj}.
\item As a corollary to Theorem \ref{LLB}, we characterize in Corollary \ref{GStar} and Lemma \ref{d2Neighborly} when the lower bound is attained for $(d-1)$-dimensional Gorenstein complexes with sufficiently many vertices.
\item In Theorem \ref{PureFlag}, we classify all quadratic monomial ideals that have a pure resolution.
\end{itemize}

By the minimality of the resolution, the minimal shifts of a graded $S$-module are strictly increasing: $m_i < m_{i+1}$.  If $\K$ is a $(d-1)$-dimensional Cohen-Macaulay simplicial complex with $n$ vertices, then the codimension of $I_\K$ is $n-d$.  Hence there are $d$ integers in the set $[n]$ that are not minimal shifts of $\K$; we call them \textit{lower skips} of $\K$.  By Lemma \ref{IncM}, there are also $d$ integers in $[n]$ that are not in the first $n-d$ maximal shifts of $\K$; we call them \textit{upper skips} of $\K$ and denote them $Q_i(\K)$, $1 = Q_0(\K) < Q_1(\K) < \ldots < Q_{d-1}(\K)$.

Recall that the \textit{i-skeleton} of a simplicial complex $\K$, $\Skel_i \K$, is the complex consisting of all faces of $\K$ of dimension $i$ or less.  If $\K$ has $n$ vertices and dimension $d-1$, we say that $\K$ is $i$-\textit{Cohen-Macaulay} if $\K[W]$ is Cohen-Macaulay of dimension $d-1$ whenever $|W| > n-i$.  For $0 \leq i \leq d-1$, let $q_i(\K)$ be the maximum $j$ such that $\Skel_i(\K)$ is $j$-Cohen-Macaulay.  The \textit{Cohen-Macaulay connectivity sequence} of $\K$ is $(q_0(\K),q_1(\K),\ldots,q_{d-1}(\K))$.  Since all $0$-dimensional complexes are Cohen-Macaulay, $q_0(\K)=n$, while $q_1(\K)$ is the maximum $j$ such that the graph of $\K$ is $j$-connected.  It is proven in \cite{Floyst} that $q_{d-1}(\K) < q_{d-2}(\K) < \ldots < q_1(\K) < q_0(\K)$, and it is proven in \cite{NovSw} that the set $\{n-q_i(\K)+1: 0 \leq i \leq d-1\}$ is precisely the set of lower skips of $\K$.  We will use the reformulation of the multiplicity lower bound conjecture from \cite{NovSw}:

\begin{theorem}
\label{CMLB}
Let $\K$ be a $(d-1)$-dimensional simplicial complex with $n$ vertices and Cohen-Macaulay sequence $(q_0,\ldots,q_{d-1})$.  Then $\K$ satisfies the multiplicity lower bound conjecture if and only if
$$
f_{d-1}(\K) \geq \frac{n(n-1)\ldots(n-d+1)}{(n-q_0+1)(n-q_1+1)\ldots (n-q_{d-1}+1)}.
$$
\end{theorem}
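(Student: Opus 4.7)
The plan is to observe that the statement is essentially a bookkeeping reformulation, given the results already cited in the preliminaries, and to verify that the combinatorial expression on the right-hand side is exactly $L(\K)$.

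First I would recall the three inputs that make the translation possible. (i) From the Hilbert series computation summarized in the preliminaries, the multiplicity is $e(S/I_\K) = f_{d-1}(\K)$, the number of facets. (ii) The codimension of $I_\K$ is $c = n - d$. (iii) From \cite{NovSw}, the set of lower skips of $\K$, i.e.\ the $d$ integers in $[n]$ that fail to appear as some $m_i(S/I_\K)$, is exactly $\{n - q_i(\K) + 1 : 0 \leq i \leq d-1\}$. Combined with Floystad's strict decrease $q_{d-1} < \cdots < q_0$, this gives $d$ distinct elements of $[n]$, as needed.

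Next I would carry out the key computation. Since the minimal shifts $m_1 < m_2 < \cdots < m_{n-d}$ together with the $d$ lower skips form a disjoint partition of $[n]$, we have
$$
\prod_{i=1}^{n-d} m_i \;=\; \frac{n!}{\prod_{i=0}^{d-1}(n - q_i(\K) + 1)}.
$$
Dividing by $c! = (n-d)!$ gives
$$
L(\K) \;=\; \frac{1}{(n-d)!}\prod_{i=1}^{n-d} m_i \;=\; \frac{n(n-1)\cdots(n-d+1)}{(n - q_0 + 1)(n - q_1 + 1)\cdots(n - q_{d-1} + 1)}.
$$
The inequality $e(S/I_\K) \geq L(\K)$ of Conjecture \ref{multiplicity-conj} is then, after substituting $e(S/I_\K) = f_{d-1}(\K)$, literally the inequality in the theorem statement. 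This gives both directions of the "if and only if."

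There is no substantive obstacle here beyond carefully checking that the $d$ lower skips and the $n-d$ minimal shifts really do tile $[n]$, which follows because $m_i < m_{i+1}$ forces $n-d$ distinct values of minimal shifts in $[n]$, leaving precisely $d$ remaining integers to be the lower skips. The whole argument is essentially an unpacking of definitions combined with the identification of lower skips from \cite{NovSw} and the fact that the multiplicity of a Stanley-Reisner ring equals its top face number; I would present it as a short verification rather than a multi-step proof.
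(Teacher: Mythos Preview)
Your proposal is correct and is exactly the derivation the paper itself sketches in its preliminaries: the paper states Theorem~\ref{CMLB} as a result from \cite{NovSw} without giving a separate proof, having already recorded the three ingredients you use (that $e(S/I_\K)=f_{d-1}$, that $\codim I_\K=n-d$, and that the lower skips are precisely $\{n-q_i+1\}$). Your bookkeeping computation $\prod m_i = n!/\prod(n-q_i+1)$ is the intended one-line unpacking; the only point worth making explicit is that the resolution has length exactly $n-d$ (so there really are $n-d$ minimal shifts lying in $[n]$), which holds because $\K$ is Cohen-Macaulay---implicit in the hypothesis that the Cohen-Macaulay sequence $(q_0,\ldots,q_{d-1})$ exists with $q_{d-1}\geq 1$.
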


Suppose $\K$ is a $(d-1)$-dimensional simplicial complex with $n$ vertices.  We say that $\K$ is \textit{t-Leray} if for all $W \subset V(\K)$ and $p \geq t$, $\tilde{H}_p(\K[W]) = 0$.  By Equation (\ref{M-interpr}), $\K$ is $t$-Leray if and only if $M_{i}(\K) \leq i+t$ for all $1 \leq i \leq n-d$.  By Hochster's formula, $\K$ is $t$-Leray if and only if $r(S/I_\K) \leq t$, where $r(S/I_\K)$ is the Castelnuovo-Mumford regularity of $S/I_\K$.

For a $(d-1)$-dimensional simplicial complex $\K$ with $n$ vertices, and for $0 \leq p < d$ and $1 \leq m \leq n$, we define $A(p,m)(\K)$, or $A(p,m)$ when $\K$ is implicit, to be $$A(p,m)(\K) := \frac{\sum_{|W|=m}\tilde{\beta}_p(\K[W])}{{n \choose m}}.$$ $A(p,m)(K)$ is the average value of the $p$-th Betti number over all induced subcomplexes of $\K$ of size $m$.  By Hochster's formula, $\beta_{m-p-1,m}(\K) = {n \choose m}A(p,m)(\K)$.

One of our main tools for verifying the multiplicity upper bound conjecture is the following.  For appropriate values $c_i$ that depend on the $Q_i(\K)$, $\K$ satisfies the multiplicity upper bound conjecture if and only if $$\sum_{i=0}^{d-1}c_i \sum_{j=0}^{i-1}(-1)^{i-j-1}A(j,Q_i) \geq 0.$$ There is also an analogous restatement of the multiplicity lower bound conjecture.  We discuss both of these restatements in the next section.

\section{A reformulation of the multiplicity conjecture}
\label{SkipSection}
In this section we prove useful reformulations of the multiplicity upper bound and lower bound conjectures.  By the minimality of the resolution, the minimal shift sequence is strictly increasing.  First we verify a similar statement regarding the maximal shift sequence of a graded module.

\begin{lemma}
\label{IncM}
Let $N$ be a graded $S$-module with codimension $c$.  Then for $1 \leq i \leq c-1$, $M_{i+1}(N) > M_i(N)$.
\end{lemma}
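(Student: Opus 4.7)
The plan is to dualize the minimal free resolution of $N$ and apply the standard minimality-plus-exactness argument to the dual complex. Let $F_\bullet\to N$ be the minimal free resolution and set $G^\bullet:=\mathrm{Hom}_S(F_\bullet,S)$, with differentials $\delta^i:G^i\to G^{i+1}$ given by transposing $d_{i+1}:F_{i+1}\to F_i$. Since by minimality the entries of each $d_i$ lie in $\mathfrak{m}=(x_1,\dots,x_n)$, the same holds for each $\delta^i$; thus $G^\bullet$ is a minimal cochain complex of free $S$-modules, with cohomology $H^i(G^\bullet)=\mathrm{Ext}^i_S(N,S)$.

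The central input I would use is the vanishing $\mathrm{Ext}^i_S(N,S)=0$ for $i<c$. I would derive this from local duality: up to a degree shift, $\mathrm{Ext}^i_S(N,S)$ is the graded Matlis dual of the local cohomology module $H^{n-i}_{\mathfrak{m}}(N)$, and by Grothendieck vanishing $H^j_{\mathfrak{m}}(N)=0$ for $j>\dim N$, which is exactly $i<c$. I expect this to be the main obstacle: without assuming $N$ is Cohen--Macaulay, $G^\bullet$ is not itself a resolution, and this Ext-vanishing is what substitutes for the partial exactness the argument needs.

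Granted the vanishing, the rest is a direct dualization of the classical proof that the $m_i$ strictly increase. From $G^i=\bigoplus_j S(j)^{\beta_{i,j}}$ one reads off that the minimum degree of a minimal generator of $G^i$ equals $-M_i$, so the target inequality $M_{i+1}>M_i$ is equivalent to the minimum generator degree of $G^{i+1}$ being strictly less than that of $G^i$. Fixing $i$ with $1\le i\le c-1$, I would pick a minimal generator $\phi$ of $G^i$ of degree $-M_i$ and argue $\delta^i(\phi)\ne 0$: otherwise $\phi$ is a cocycle, and $H^i(G^\bullet)=0$ would put $\phi\in\mathrm{Im}(\delta^{i-1})\subseteq\mathfrak{m}G^i$ by minimality of $\delta^{i-1}$, contradicting that $\phi$ is a minimal generator. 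Minimality of $\delta^i$ then forces each nonzero component of $\delta^i(\phi)$ onto a basis element of some summand $S(k)^{\beta_{i+1,k}}$ with coefficient in $\mathfrak{m}$, which requires $k\ge M_i+1$. Hence $\beta_{i+1,k}\ne 0$ for some $k>M_i$, giving $M_{i+1}>M_i$.
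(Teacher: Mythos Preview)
Your proof is correct and follows essentially the same route as the paper: dualize the minimal free resolution, invoke $\Ext^i_S(N,S)=0$ for $i<c$ to obtain exactness of the dual complex in that range, and then exploit minimality to conclude that the bottom generator degrees of the $G^i$ strictly decrease, which is exactly $M_{i+1}>M_i$. The only cosmetic differences are that the paper cites the Ext-vanishing from \cite[I.12.3]{St96} rather than deriving it via local duality and Grothendieck vanishing, and the paper packages your explicit chase of a minimal generator through $\delta^i$ as the single sentence ``the minimal shift sequence of $F^*$ is strictly increasing.''
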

\proof 
Let $F$ be the minimal free resolution of $N$, and consider $F^* := \Hom_S(F,S)$.  It is well known that for $i < c$, $\Ext^i_S(N,S) = 0$ (e.g. follows from I.12.3 of \cite{St96}), which implies $H^i(F^*) = H_i(F) = 0$ for $i < c$.  Hence the portion $i < c$ of $F^*$ is exact and a minimal free resolution.  The degrees of the generators of $F^*_j$ are the negatives of the degrees of the generators of $F_j$.  Hence the top degree generators of $F_j$ correspond to the bottom degree generators of $F^*_j$ for $j < c$.  Since the minimal shift sequence of $F^*$ is strictly increasing, $M_i(F) < M_{i+1}(F)$ for $i < c$.
\endproof

The maximal shift sequence might not be strictly increasing beyond the $c$th entry.  For example, let $\K$ be the simplicial complex given by the disjoint union of a cycle of length $3$ and a vertex.  Then $M(S/I_\K) = (3,4,4)$.

Now we want to use Lemma \ref{IncM} to find a recharacterization of the multiplicity conjecture.  The computations
below have a flavor similar to and are motivated by the computations from \cite{HerzSr98} used there to prove the multiplicity conjecture for ideals with a quasi-pure resolution.  Let $\K$ be a $(d-1)$-dimensional simplicial complex with $n$ vertices and upper skips $Q_i$.  Recall the Euler-Poincar\'{e} formula, which states $$f_{d-1}(\K)-f_{d-2}(\K)+\ldots+(-1)^d f_{-1}(\K) = \tilde{\beta}_{d-1}(\K;\field) - \ldots +(-1)^{d-1}\tilde{\beta}_0(\K;\field).$$ Choose $W \subset V(\K)$ such that $|W| = Q_j$ for some $1 \leq j \leq d-1$.  Then the Euler-Poincar\'{e} formula yields $$\sum_{i=-1}^{d-1}(-1)^{d-i-1}f_{i}(\K[W]) = \sum_{i=0}^{d-1}(-1)^{d-i-1}\tilde{\beta}_i(\K';\field).$$ By definition of $Q_j$, the right-hand side of this equation reduces to $\sum_{i=0}^{j-1}(-1)^{d-i-1}\tilde{\beta}_i(\K';\field)$.  Average over all such $W$ to conclude
\begin{equation}
\label{EP-Induced}
\sum_{i=-1}^{d-1}(-1)^{d-i-1}\frac{{Q_j \choose {i+1}}}{{n \choose {i+1}}}f_{i} = \sum_{i=0}^{j-1}(-1)^{d-i-1}A(i,Q_j).
\end{equation}
In Equation (\ref{EP-Induced}), take $f_0 = n$ and $f_{-1} = 1$ to build a system of linear equations in the remaining $f_i$:
\begin{equation}
\sum_{i=1}^{d-1}(-1)^{d-i-1}\frac{{Q_j \choose {i+1}}}{{n \choose {i+1}}}f_{i} = (-1)^{d}(Q_j-1) + \sum_{i=0}^{j-1}(-1)^{d-i-1}A(i,Q_j).
\end{equation}
Solving for $f_{d-1}$ using Cramer's rule yields $f_{d-1} = E/D$, where

\begin{equation}
\label{Cramer} E =
\begin{vmatrix} 
(-1)^{d-2}\frac{{Q_1 \choose 2}}{{n \choose 2}} & \hdots & (-1)^{d}(Q_1-1) + \sum_{i=0}^{1-1}(-1)^{d-i-1}A(i,Q_1) \\
\vdots & \ddots & \vdots \\
(-1)^{d-2}\frac{{Q_{d-1} \choose 2}}{{n \choose 2}} & \hdots & (-1)^{d}(Q_{d-1}-1) + \sum_{i=0}^{d-2}(-1)^{d-i-1}A(i,Q_{d-1}) \\
\end{vmatrix}
\quad \mbox{and}
\end{equation}
$$ D =
\begin{vmatrix} 
(-1)^{d-2}\frac{{Q_1 \choose 2}}{{n \choose 2}} & \hdots & (-1)^{0}\frac{{Q_1 \choose d}}{{n \choose d}} \\
\vdots & \ddots & \vdots \\
(-1)^{d-2}\frac{{Q_{d-1} \choose 2}}{{n \choose 2}} & \hdots & (-1)^{0}\frac{{Q_{d-1} \choose d}}{{n \choose d}} \\
\end{vmatrix}.
$$

For the moment, take $A(i,Q_j) = 0$.  The above reduces to

$$
f_{d-1} = (-1)^d{n \choose d}
\frac
{\begin{vmatrix} 
{Q_1 \choose 2} & \hdots & (Q_1-1)\\
\vdots & \ddots & \vdots \\
{Q_{d-1} \choose 2} & \hdots & (Q_{d-1}-1)\\
\end{vmatrix}}
{\begin{vmatrix} 
{Q_1 \choose 2} & \hdots & {Q_1 \choose d} \\
\vdots & \ddots & \vdots \\
{Q_{d-1} \choose 2} & \hdots & {Q_{d-1} \choose d} \\
\end{vmatrix}}
$$

\begin{equation}
\label{VDM}
= (-1)^d \frac{n(n-1)\ldots(n-d+1)}{Q_1\ldots Q_{d-1}}
\frac
{\begin{vmatrix} 
Q_1 & \hdots & 1\\
\vdots & \ddots & \vdots \\
Q_{d-1} & \hdots & 1\\
\end{vmatrix}}
{\begin{vmatrix} 
1 & \hdots & (Q_1-2)\ldots(Q_1-d+1) \\
\vdots & \ddots & \vdots \\
1 & \hdots & (Q_{d-1}-2)\ldots(Q_{d-1}-d+1) \\
\end{vmatrix}}.
\end{equation}

The previous equality follows by factoring terms out of each row in the matrices.  In the fraction of Equation (\ref{VDM}), the numerator is a polynomial of degree $(d-2)(d-1)/2$ in the variables $Q_i$, which vanishes when $Q_i = Q_j$ for all $1 \leq i < j \leq d-1$.  The monomial $Q_1Q_2^2 \ldots Q_{d-2}^{d-2}$ appears with coefficient $1$.  Hence the numerator is $(-1)^d\prod_{1 \leq i < j \leq d-1}(Q_j-Q_i)$.  All the same applies to the denominator, except that the monomial $Q_1Q_2^2 \ldots Q_{d-2}^{d-2}$ appears with coefficient $(-1)^d$; hence the denominator is $\prod_{1 \leq i < j \leq d-1}(Q_j-Q_i)$.  We conclude that $f_{d-1} = \frac{n\ldots(n-d+1)}{Q_1\ldots Q_{d-1}}$, precisely the conjectured upper bound.

For some positive numbers $c'_i$ that depend on the $Q_j$ and not on the $A(i,Q_j)$, $$f_{d-1} = \frac{n(n-1)\ldots(n-d+1)}{Q_1\ldots Q_{d-1}} + \sum_{i=0}^{d-1}c'_i \sum_{j=0}^{i-1}(-1)^{i-j}A(j,Q_i).$$  Let $\tilde{c}_i = Dc'_i$, where $D$ is the absolute value of the denominator of Equation (\ref{Cramer}).  Next we aim to calculate $\tilde{c}_i$ for $1 \leq j \leq d-1$.  By expanding along the rightmost column of the numerator of Equation (\ref{Cramer}), we find
\begin{equation}
\tilde{c}_i = (-1)^{d-1}(-1)^d
\begin{vmatrix}
\frac{{Q_1 \choose 2}}{{n \choose 2}} & \hdots & \frac{{Q_1 \choose d-1}}{{n \choose d-1}} \\
\vdots & \ddots & \vdots \\
\widehat{\frac{{Q_i \choose 2}}{{n \choose 2}}} & \hdots & \widehat{\frac{{Q_i \choose d-1}}{{n \choose d-1}}} \\
\vdots & \ddots & \vdots \\
\frac{{Q_{d-1} \choose 2}}{{n \choose 2}} & \hdots & \frac{{Q_{d-1} \choose d-1}}{{n \choose d-1}}
\end{vmatrix}.
\end{equation}
In the above expression, the hats indicate that the specified row is removed.  The factor of $(-1)^{d-1}$ is present since we are expanding along the $d$-th column, while the factor of $(-1)^d$ accounts for the removal of a factor $(-1)^d$ in Equation (\ref{Cramer}).

Let $c_i = -\tilde{c}_i{n \choose 2}{n \choose 3} \ldots {n \choose d-1}2!3!\ldots (d-1)!c_i$.  Then $$c_i = 
\begin{vmatrix}
(Q_1)(Q_1-1) & \hdots & (Q_1)(Q_1-1)\ldots(Q_1-d+1) \\
\vdots & \ddots & \vdots \\
\widehat{(Q_i)(Q_i-1)} & \hdots & \widehat{(Q_i)(Q_i-1)\ldots(Q_i-d+1)} \\
\vdots & \ddots & \vdots \\
(Q_{d-1})(Q_{d-1}-1) & \hdots & (Q_{d-1})(Q_{d-1}-1)\ldots(Q_{d-1}-d+1) \\
\end{vmatrix}
$$ $$= \prod_{1 \leq u \leq d-1, u \neq i}(Q_u(Q_u-1))\prod_{1 \leq u < v \leq d-1, u,v \neq i}(Q_v-Q_u).$$

The next theorem summarizes the foregoing discussion.

\begin{theorem}
\label{UBIEqiv}
$\K$ satisfies the multiplicity upper bound conjecture if and only if 
\begin{equation}
\label{UBIEqivEquation}
\sum_{i=0}^{d-1}c_i \sum_{j=0}^{i-1}(-1)^{i-j-1}A(j,Q_i) \geq 0.
\end{equation}
\end{theorem}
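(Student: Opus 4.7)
The plan is to package the computations already displayed in this section into a formal equivalence; almost all the work has been done, and my task is to assemble it and track signs.

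The preparatory step is to rewrite $U(\K)$ in terms of the upper skips. Lemma \ref{IncM} implies $M_1 < M_2 < \cdots < M_{n-d}$ are $n-d$ distinct elements of $[n]$, and since $I_\K$ has no linear generators, $M_1 \geq 2$, so $Q_0 = 1$ and
$$U(\K) = \frac{M_1 M_2 \cdots M_{n-d}}{(n-d)!} = \frac{n!/(Q_0 Q_1 \cdots Q_{d-1})}{(n-d)!} = \frac{n(n-1)\cdots(n-d+1)}{Q_1 \cdots Q_{d-1}}.$$
Thus the upper bound conjecture is the inequality $f_{d-1}(\K) \leq n(n-1)\cdots(n-d+1)/(Q_1 \cdots Q_{d-1})$.

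The next step is to verify the key vanishing $\tilde{\beta}_p(\K[W]; \field) = 0$ whenever $|W| = Q_j$ and $p \geq j$. If not, Hochster's formula gives $\beta_{Q_j-p-1, Q_j}(S/I_\K) \neq 0$, forcing $M_{Q_j - p - 1} \geq Q_j$; but the $j+1$ upper skips $Q_0, \ldots, Q_j$ all lie in $[Q_j]$, leaving only $Q_j - j - 1$ of the strictly increasing $M_i$ inside $[Q_j]$, so $Q_j - p - 1 \geq Q_j - j$, i.e.\ $p \leq j - 1$. With this vanishing in hand, the reduced Euler-Poincaré identity for $\K[W]$, averaged over $|W| = Q_j$ for $j = 1, \ldots, d-1$, yields precisely the $(d-1)\times(d-1)$ linear system (\ref{EP-Induced}) for $f_1, \ldots, f_{d-1}$ after substituting $f_{-1} = 1$ and $f_0 = n$; Cramer's rule then produces $f_{d-1}(\K) = E/D$ as in (\ref{Cramer}).

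The main step is to separate the constant from the $A$-linear part of $E/D$. Linearity of $E$ in its rightmost column gives $f_{d-1}(\K) = E_0/D + L/D$, where $E_0$ comes from setting every $A(j, Q_i)$ to zero and $L$ is linear in the $A(j, Q_i)$. The Vandermonde manipulation carried out in the passage leading to Equation (\ref{VDM}) evaluates $E_0/D$ to exactly $U(\K)$. The same row-factorization applied to the cofactor obtained by deleting the $i$-th row of the coefficient matrix turns the rescaled coefficient of $\sum_{j=0}^{i-1}(-1)^{i-j-1}A(j, Q_i)$ into the explicit product
$$c_i = \prod_{\substack{1 \le u \le d-1 \\ u \neq i}} Q_u(Q_u-1) \; \cdot \; \prod_{\substack{1 \le u < v \le d-1 \\ u, v \neq i}} (Q_v - Q_u),$$
which is strictly positive because $Q_u \geq 2$ and the $Q_k$ are strictly increasing. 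The rescaling between this $c_i$ and the raw Cramer coefficient is a positive constant depending only on $n$ and $d$, so after multiplying through, $f_{d-1}(\K) \leq U(\K)$ is equivalent to (\ref{UBIEqivEquation}).

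The main obstacle is sign bookkeeping: factors of $(-1)^d$ enter the Vandermonde evaluation, the cofactor expansion of $E$, and the definitions of $\tilde{c}_i$ and $c_i$, and one must verify they combine so that the coefficient of $(-1)^{i-j-1}A(j, Q_i)$ in $U(\K) - f_{d-1}(\K)$ is a positive scalar multiple of $c_i$ rather than of $-c_i$. The conceptual content, however, lies entirely in the two inputs above: the vanishing of $\tilde{H}_p(\K[W])$ for $p \geq j$ on $Q_j$-vertex subcomplexes, and the reduced Euler-Poincaré identity. Once the signs are reconciled, the theorem is a direct rearrangement of Cramer's identity.
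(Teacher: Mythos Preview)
Your proposal is correct and follows essentially the same route as the paper: the paper's ``proof'' of Theorem~\ref{UBIEqiv} is exactly the block of computations preceding its statement (Euler--Poincar\'e on induced subcomplexes of size $Q_j$, the vanishing of $\tilde{\beta}_p$ for $p\ge j$, Cramer's rule, and the Vandermonde evaluation), which you have faithfully reassembled. Your explicit justification of $Q_0=1$ and of the vanishing $\tilde{\beta}_p(\K[W])=0$ for $p\ge j$ via Hochster's formula is slightly more detailed than the paper's ``by definition of $Q_j$,'' but the argument is the same, and your acknowledgment that the remaining work is sign bookkeeping matches the paper's own (somewhat loose) handling of the signs in passing from $c'_i$ to $\tilde{c}_i$ to $c_i$.
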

Note that each $c_i > 0$.  For our applications, it is often more useful to work with ${c_j}/{c_i}$.  When $j > i$, $$\frac{c_j}{c_i} = \frac{Q_i(Q_i-1)}{Q_j(Q_j-1)}\prod_{u=j+1}^{d-1}\frac{Q_u-Q_i}{Q_u-Q_j}\prod_{u=i+1}^{j-1}\frac{Q_u-Q_i}{Q_j-Q_u}\prod_{u=1}^{i-1}\frac{Q_i-Q_u}{Q_j-Q_u}.$$

A stronger version of the multiplicity conjecture also asserts that if $\K$ attains the upper bound, then $\K$ is Cohen-Macaulay and has a pure resolution.  Our next theorem proves that fact in one important case.

\begin{proposition}
\label{UPureRes}
Let $\K$ be a $(d-1)$-dimensional simplicial complex.  Suppose that, for all $0 \leq j < i < d$, $A(j,Q_i) = 0$.  Then $\K$ is Cohen-Macaulay and has a pure resolution.
\end{proposition}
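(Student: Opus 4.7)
The plan is to use the hypothesis, together with Lemma \ref{IncM}, to confine the support of the Betti table of $S/I_\K$ to the $n-d+1$ columns $\{0\} \cup \{M_1,\ldots,M_{n-d}\}$, and then to read off Cohen--Macaulayness and purity simultaneously from the Boij--S\"oderberg decomposition of Eisenbud--Schreyer cited in the introduction.

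The column-vanishing step is as follows. Fix $1 \leq i \leq d-1$; I claim $\beta_{k,Q_i}(S/I_\K)=0$ for every $k$. Hochster's formula turns the hypothesis $A(j,Q_i)=0$ for $j<i$ into the vanishing $\beta_{k,Q_i}=0$ on the range $Q_i-i \leq k \leq Q_i-1$. For the complementary range $1 \leq k \leq Q_i-i-1$, observe that the $i+1$ upper skips $Q_0,\ldots,Q_i$ occupy $i+1$ of the slots in $\{1,\ldots,Q_i\}$, so Lemma \ref{IncM} identifies the maximal shifts below $Q_i$ as precisely $M_1<\cdots<M_{Q_i-i-1}$, all strictly less than $Q_i$; hence $M_k<Q_i$ and $\beta_{k,Q_i}=0$. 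A short counting argument (the $d-1-i$ upper skips $>Q_i$ and a non-negative number of maximal shifts $>Q_i$ sum to $n-Q_i$) yields $Q_i \leq n-d+1+i$, so $Q_i-i-1 \leq n-d$; thus the Lemma \ref{IncM} argument legitimately handles every $k \leq n-d$ while the hypothesis range absorbs every $k$ with $n-d<k \leq Q_i-1$. Combined with $\beta_{0,Q_i}=0$ and $\beta_{k,Q_i}=0$ for $k \geq Q_i$, the column $j=Q_i$ vanishes entirely, so the support of $\beta(S/I_\K)$ lies in $\{0\} \cup \{M_1,\ldots,M_{n-d}\}$.

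By Eisenbud--Schreyer, $\beta(S/I_\K) = \sum_\sigma \lambda_\sigma \pi_\sigma$ with $\lambda_\sigma > 0$ and each $\pi_\sigma$ the pure diagram attached to a degree sequence $\sigma=(d_0<d_1<\cdots<d_{s_\sigma})$. Every nonzero position $(k,d_k)$ of a contributing $\pi_\sigma$ must lie in the support of $\beta(S/I_\K)$, so $d_0=0$ and $d_k = M_{l_k}$ for $k \geq 1$ with $1 \leq l_1<l_2<\cdots<l_{s_\sigma} \leq n-d$, which automatically gives $l_k \geq k$. On the other hand, Lemma \ref{IncM} forces $\beta_{k,M_l}(S/I_\K)=0$ whenever $k \leq n-d$ and $l>k$, so $\lambda_\sigma > 0$ requires $l_k \leq k$ for $k \leq \min(s_\sigma,n-d)$; combined, $l_k = k$, so $\sigma$ is a prefix $(0,M_1,\ldots,M_{s_\sigma})$ of $(0,M_1,\ldots,M_{n-d})$ with $s_\sigma \leq n-d$. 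Since $\operatorname{pd}(S/I_\K) \geq \codim(S/I_\K)=n-d$, some contributing $\sigma$ achieves $s_\sigma = n-d$, forcing $\operatorname{pd}(S/I_\K)=n-d$ and $\K$ Cohen--Macaulay. Finally, every contributing $\pi_\sigma$ places its nonzero entries only at diagonal positions $(k,M_k)$, so $\beta(S/I_\K)$ has nonzero entries only at $(0,0)$ and at $(l,M_l)$ for $1 \leq l \leq n-d$, yielding the pure resolution.

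The crux of the argument is the column-vanishing step; there the sharp inequality $Q_i \leq n-d+1+i$ is what lets Lemma \ref{IncM} and the hypothesis dovetail across the $k=n-d$ boundary, which is the only subtle point. Once the column support is confined to $\{0\} \cup \{M_l\}$, Cohen--Macaulayness and purity come out together from Boij--S\"oderberg, rather than having to be established in sequence.
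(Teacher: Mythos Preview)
Your column-vanishing step is correct and in fact more carefully argued than the paper's one-line claim that the induced subcomplexes on $Q_i$ vertices are ``acyclic''; the inequality $Q_i \leq n-d+1+i$ is exactly what is needed to make Lemma~\ref{IncM} cover the range $1\le k\le Q_i-i-1$.

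Where you diverge from the paper is after that step. The paper's argument is entirely elementary: once the columns $Q_0,\ldots,Q_{d-1}$ of the Betti table vanish, the minimal shifts $m_1<m_2<\cdots$ are distinct elements of the $(n-d)$-element set $[n]\setminus\{Q_0,\ldots,Q_{d-1}\}$, so the resolution has length at most $n-d$; combined with $\operatorname{pd}\ge\codim=n-d$ this gives Cohen--Macaulayness, and then the $m_k$ and the $M_k$ are two strictly increasing sequences of length $n-d$ in the same $(n-d)$-element set, hence equal. No Boij--S\"oderberg is needed.

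Your route through the Boij--S\"oderberg decomposition is mathematically sound, but note one citation issue: the decomposition you invoke allows pure diagrams of \emph{varying} length $s_\sigma$, which is the statement for arbitrary (not necessarily Cohen--Macaulay) modules. What the introduction actually cites is the Eisenbud--Schreyer result for Cohen--Macaulay modules; the extension to general modules is the subsequent Boij--S\"oderberg paper, and you cannot apply the CM version before you know $S/I_\K$ is Cohen--Macaulay. So your argument is circular unless you import that extension. Given that the elementary pigeonhole argument above gets both conclusions in two lines, the Boij--S\"oderberg machinery is substantial overkill here.
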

\proof For $1 \leq i \leq d-1$, an induced subcomplex of $\K$ on a vertex set of size $Q_i$ is acyclic, and so the $Q_i$ must also be skips in the minimal shift sequence.  Hence for $0 \leq i \leq n-d$, $m_i(\K) = M_i(\K)$.  Also since $Q_0,Q_1,\ldots,Q_{d-1}$ are skips in the minimal shift sequence of $\K$ and the minimal shift sequence is strictly increasing even beyond the $(n-d)$-th term, the minimal free resolution has length $n-d$.  This implies that $\K$ is Cohen-Macaulay.
\endproof

Suppose that $A(j,Q_i)(\K) \neq 0$ for some $0 \leq j < i \leq d-1$.  Then either $Q_i-j-1 > n-d$ and $\K$ is not Cohen-Macaulay by Hochster's criterion, or $m_{Q_i-j-1}(\K) \leq Q_j$ and hence $X_i(\K) > Q_i(\K)$ and $\K$ does not have a pure resolution.  Therefore, the conjecture that if $\K$ satisfies the multiplicity upper bound with equality, then $\K$ is Cohen-Macaulay and has a pure resolution is equivalent to the following conjecture.

\begin{conjecture}
\label{UBIEqual}
Let $\K$ be a $(d-1)$-dimensional simplicial complex.  Then $$\sum_{i=0}^{d-1}c_i \sum_{j=0}^{i-1}(-1)^{i-j-1}A(j,Q_i) = 0$$ if and only if $A(j,Q_i) = 0$ for all $1 \leq j < i \leq d-1$.
\end{conjecture}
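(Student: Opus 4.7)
My approach to Conjecture \ref{UBIEqual} splits by direction. The ``if'' direction is immediate: if $A(j, Q_i) = 0$ for all $0 \leq j < i \leq d - 1$, every summand of the weighted alternating sum $\sum_{i=0}^{d-1} c_i \sum_{j=0}^{i-1} (-1)^{i-j-1} A(j, Q_i)$ vanishes. The substance lies in the ``only if'' direction, and here the plan is to translate the problem into the extremal case of the upper bound via Theorem \ref{UBIEqiv}: vanishing of the sum is equivalent to $f_{d-1}(\K) = n(n-1)\cdots(n-d+1)/(Q_1 \cdots Q_{d-1})$, i.e., the multiplicity upper bound is attained with equality. By Proposition \ref{UPureRes} and the paragraph following it, individual vanishing of all $A(j, Q_i)$ in the given range is equivalent to $\K$ being Cohen-Macaulay with a pure resolution, so the conjecture becomes: equality in the upper bound forces Cohen-Macaulay plus pure resolution.

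To attack the ``only if'' direction, I would proceed by induction on $d$. The base case $d = 2$ is trivial, since $T_1 = A(0, Q_1)$ and $c_1 > 0$, so $c_1 A(0, Q_1) = 0$ forces $A(0, Q_1) = 0$. For the inductive step I would combine two ingredients. First, the definition of upper skip: for each $m$ with $1 \leq m \leq n-d$ and each $i$, either $A(Q_i - m - 1, Q_i) = 0$ or there exists $W \subset V(\K)$ with $|W| > Q_i$ and $\tilde{H}_{|W| - m - 1}(\K[W]) \neq 0$. Second, the extremal hypothesis $f_{d-1}(\K) = U(\K)$, which I want to use to rule out the second alternative. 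A natural mechanism is to pass to the links $\lk_\K(v)$, whose $f$-vectors, upper skips, and induced Betti data relate to those of $\K$ via Hilbert-series recursions of the form $h_i(\K) = h_i(\K - v) + h_{i-1}(\lk_\K(v))$; applying the inductive hypothesis to each link, together with careful bookkeeping of how the weights $c_i$ transform under removal of a vertex, should localize the problem.

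The main obstacle is precisely the alternating signs. Each inner sum $T_i = \sum_{j=0}^{i-1}(-1)^{i-j-1} A(j, Q_i)$ is itself an alternating combination of nonnegative quantities, so $T_i$ can be of either sign, and a positive-coefficient linear combination $\sum c_i T_i$ can vanish through genuine cancellation even when many $A(j, Q_i) > 0$. Ruling this out requires nontrivial inequalities among the $A(j, Q_i)$: either Morse-type bounds relating $A(j, Q_i)$ to $A(j \pm 1, Q_i)$ in the spirit of rank-selection inequalities, or combinatorial structure theorems forcing an extremal complex with $f_{d-1}(\K) = U(\K)$ to be highly neighborly with Betti numbers concentrated in a single homological degree. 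Since this conjecture is the simplicial-complex incarnation of the stronger multiplicity conjecture of Herzog-Zheng and Migliore-Nagel-R\"omer, it is plausibly as hard as the latter in full generality; a reasonable first milestone would be to verify it for Cohen-Macaulay $\K$ of small dimension, where the inductive machinery can be checked by direct computation.
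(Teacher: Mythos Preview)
The statement you are addressing is Conjecture \ref{UBIEqual}, and the paper does \emph{not} prove it: it is explicitly stated as an open conjecture, identified in the paragraph preceding it as equivalent to the strong form of the multiplicity upper bound conjecture (equality forces Cohen--Macaulay with pure resolution). There is therefore no proof in the paper to compare your proposal against for the general statement.

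Your proposal is not a proof either, and you are forthright about this. The ``if'' direction and the $d=2$ base case are correct. The inductive scheme via links and the recursion $h_i(\K) = h_i(\K - v) + h_{i-1}(\lk_\K(v))$ is a natural thing to try, but as you yourself identify, the alternating signs in $T_i = \sum_{j} (-1)^{i-j-1} A(j,Q_i)$ permit genuine cancellation, and nothing in your outline supplies the inequalities among the $A(j,Q_i)$ needed to rule it out. That is the gap, and it is precisely the gap that leaves the conjecture open in the paper.

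For context, the paper does settle the conjecture in special cases: for $d \le 4$ (Theorem \ref{DimThree}), for complexes with $M_1 \ge d-1$ (the Corollary to Theorem \ref{LowSkips}), and for certain homology manifolds with many vertices (Section \ref{HomManifold}). The $d=4$ argument does not use your link-based induction; it combines a purely graph-theoretic inequality on the $A(0,\cdot)$ terms (Lemma \ref{AZero}), a topological disconnection lemma controlling $A(1,\cdot)$ (Lemma \ref{TopDis}), and an ad hoc shifting operation (Lemmas \ref{Shifting} and \ref{LimShift}) that preserves the weighted sum while strictly reducing $A(1,n)$. These tools are tailored to dimension three and offer no obvious template for general $d$, which is consistent with your assessment that the full conjecture is likely as hard as the Herzog--Zheng / Migliore--Nagel--R\"omer strengthening it encodes.
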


If $\K$ has a quasi-pure resolution (that is, $m_{i+1} \geq M_i$ for all $i$), then $A(j,Q_i) = 0$ unless $j = i-1$, and also $A(j,X_i) = 0$ unless $j = i$.  Hence from Theorem \ref{UBIEqiv}, we recover the multiplicity conjecture for Cohen-Macaulay simplicial complexes with quasi-pure resolutions from \cite{HerzSr98}, and equality is attained only when $\K$ has a pure resolution: $f_{d-1}(\K) = \prod_{i=1}^{n-d} m_i(\K) / (n-d)! = \prod_{i=1}^{n-d} M_i(\K) / (n-d)!$ (proven in \cite{HerzZheng}).

Our first application of Theorem \ref{UBIEqiv} is the following result that in particular implies the multiplicity upper bound conjecture for two-dimensional complexes and non-flag, three-dimensional complexes.  (Recall that a simplicial complex $\K$ is \textit{flag} if $I_\K$ is a quadratic ideal.)

\begin{theorem}
\label{LowSkips}
Let $\K$ be a $(d-1)$-dimensional complex, $d \geq 3$, with $n$ vertices and maximal shift sequence $M$ with skips $Q_i$.  If  $M_1 \geq d-1$, or equivalently if $Q_{d-3} = d-2$, then $\K$ satisfies the multiplicity upper bound conjecture.
\end{theorem}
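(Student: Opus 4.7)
By Theorem \ref{UBIEqiv} the multiplicity upper bound conjecture for $\K$ is equivalent to
$$S := \sum_{i=1}^{d-1} c_i T_i \;\geq\; 0, \qquad T_i := \sum_{j=0}^{i-1}(-1)^{i-j-1} A(j, Q_i),$$
with each $c_i > 0$. The hypothesis fixes $Q_i = i+1$ for $0 \leq i \leq d-3$, so only $Q_{d-2}, Q_{d-1} \geq d-1$ remain free. The plan is to simplify the small-index $T_i$ via the averaged Euler-Poincar\'{e} identity and then combine the result with $c_{d-2} T_{d-2} + c_{d-1} T_{d-1}$.

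For each $i \in \{1, \ldots, d-3\}$, the induced subcomplexes $\K[W]$ appearing in $A(\cdot, Q_i)$ have $|W| = i+1$ vertices, and the top-dimensional reduced Betti number $\tilde{\beta}_i(\K[W])$ automatically vanishes: either $\K[W] = \Delta_W$ is the full simplex on $W$, or $\dim \K[W] < i$. Hence $A(i, i+1) = 0$. Combined with the identity $\binom{i+1}{k+1} = 0$ for $k > i$, which truncates the LHS of (\ref{EP-Induced}) at $Q_j = i+1$, rearrangement of (\ref{EP-Induced}) yields
$$T_i \;=\; \sum_{k=-1}^{i}(-1)^{i-k-1}\frac{\binom{i+1}{k+1}}{\binom{n}{k+1}}\,f_k \qquad (1 \leq i \leq d-3),$$
so each such $T_i$ is a signed linear combination of the low-dimensional $f$-numbers only (and in particular is independent of the residual averages $A(j, Q_{d-2})$ and $A(j, Q_{d-1})$).

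Next, substituting these closed forms into $\sum_{i=1}^{d-3} c_i T_i$ and adding the two residual terms, $S$ regroups into an $f$-vector part and a part involving $A(j, Q_{d-2}), A(j, Q_{d-1})$. By the Vandermonde-style identity underlying the derivation of Equation (\ref{VDM}), the $f$-vector part is forced to collapse into $C(U(\K) - f_{d-1})$ for a positive constant $C$. Thus it remains to verify that the residual $A$-value combination is itself non-negative. I would carry this out using the explicit ratio formula for $c_j/c_i$ displayed after Theorem \ref{UBIEqiv}, specialized to $Q_u = u+1$ for $u \leq d-3$: many of the factors $(Q_u - Q_v)$ and $Q_u(Q_u - 1)$ then reduce to products of consecutive integers, and the signs of the coefficients of each residual $A(j, Q_{d-2})$ and $A(j, Q_{d-1})$ can be read off directly.

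The main obstacle is precisely this final sign analysis. Because the coefficients depend on both free skips $Q_{d-2}$ and $Q_{d-1}$, the check must be made uniformly over all admissible choices $d-1 \leq Q_{d-2} < Q_{d-1} \leq n$. The role of the hypothesis $M_1 \geq d-1$ is critical here: it pins the first $d-2$ upper skips to the consecutive values $1, 2, \ldots, d-2$, which both enables the truncation of (\ref{EP-Induced}) in the first step and produces the specific arithmetic structure in the $c_j/c_i$ ratios that makes the signs come out right. For $d = 3$ the hypothesis is vacuous and the argument recovers the two-dimensional case already treated in \cite{NovSw}; for $d = 4$ it is equivalent to non-flagness and yields the upper bound for non-flag three-dimensional complexes.
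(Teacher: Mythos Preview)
Your proposal has a genuine gap at the decisive step. The identity $S = C\,(U(\K) - f_{d-1})$ holds for the \emph{full} sum $\sum_{i=1}^{d-1} c_i T_i$; this is exactly the content of Theorem~\ref{UBIEqiv}. Your partial sum $\sum_{i=1}^{d-3} c_i T_i$, rewritten via (\ref{EP-Induced}), involves only $f_{-1},\ldots,f_{d-3}$ (as you yourself observe, since $\binom{i+1}{k+1}=0$ for $k>i$), so it cannot equal $C\,(U(\K)-f_{d-1})$, which carries an $f_{d-1}$ term. Thus the decomposition ``$f$-vector part $=C(U(\K)-f_{d-1})$, residual $A$-part $\geq 0$'' is not a valid reformulation of the problem: either the first equality is false, or you have tacitly used (\ref{EP-Induced}) for $i=d-2,d-1$ as well, in which case the ``residual'' is identically zero and you are back to proving $U(\K)\geq f_{d-1}$ directly.

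More importantly, you acknowledge that the sign analysis is ``the main obstacle'' and then do not carry it out. Inside $T_{d-1}$ the term $A(d-3,Q_{d-1})$ has coefficient $(-1)^{(d-1)-(d-3)-1}=-1$, and similarly $A(d-4,Q_{d-2})$ enters $T_{d-2}$ with a negative sign; these are genuine obstructions, and nothing coming from the lower $T_i$ cancels them, since those $T_i$ live in a disjoint set of variables after your rewriting. The paper does not resolve this by any ratio computation with the $c_j/c_i$; instead it performs three reductions on $\K$ that leave $f_{d-1}$ and all $M_i$ unchanged but force every negatively-signed $A$-term to vanish: first add all missing faces of dimension $\leq d-3$ to make $\K$ $(d-2)$-neighborly (killing $A(p,\cdot)$ for $p\leq d-4$); then use Lemma~\ref{HomRed} and induction on $n$ to assume $\tilde H_{d-1}(\K)=0$, so $Q_{d-1}=n$; finally add $(d-2)$-faces $F$ with $[\partial F]\neq 0$ in $\tilde H_{d-3}(\K)$ until $\tilde H_{d-3}(\K)=0$, checking carefully that this does not move $Q_{d-2}$. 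After these moves only $A(d-2,Q_{d-1})$ and $A(d-3,Q_{d-2})$ survive in (\ref{UBIEqivEquation}), both with positive sign. Your outline is missing exactly this mechanism.
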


First we need a lemma.  This calculation was used in the proof of the multiplicity upper bound conjecture for matroid complexes, and it is useful in a wider context.

\begin{lemma}
\label{HomRed}
Let $\K$ be a $(d-1)$-dimensional simplicial complex with $n$ vertices.  Suppose $\tilde{H}_{d-1}(\K) \neq 0$.  If $\K-v$ satisfies the multiplicity upper bound conjecture for all $v \in V(\K)$, then $\K$ satisfies the multiplicity upper bound conjecture.  Furthermore, suppose that for all $v \in V(\K)$, $\K-v$ attains the upper bound if and only if $\K-v$ is Cohen-Macaulay and has a pure resolution.  Then $\K$ attains the upper bound if and only if $\K$ is Cohen-Macaulay and has a pure resolution.
\end{lemma}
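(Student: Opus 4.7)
The plan is to reduce the problem to the vertex deletions $\K-v$ via a face-counting identity and then extract structural information from equality.

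For the inequality, I start with the identity
\[ \sum_{v \in V(\K)} f_{d-1}(\K-v) = (n-d)\,f_{d-1}(\K), \]
which holds because each $(d-1)$-face of $\K$ contains exactly $d$ vertices and therefore appears in $n-d$ of the deletions. The hypothesis gives $f_{d-1}(\K-v) \leq U(\K-v)$ for each $v$. Since every induced subcomplex of $\K-v$ is also an induced subcomplex of $\K$, equation~(\ref{M-interpr}) yields $M_i(\K-v) \leq M_i(\K)$, so $U(\K-v) \leq \prod_{i=1}^{n-d-1} M_i(\K)/(n-d-1)!$. The hypothesis $\tilde{H}_{d-1}(\K) \neq 0$ forces $M_{n-d}(\K) = n$, so $(n-d) U(\K) = n \prod_{i=1}^{n-d-1} M_i(\K)/(n-d-1)!$. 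Summing the per-$v$ bounds and combining with the identity gives $(n-d) f_{d-1}(\K) \leq (n-d) U(\K)$.

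For the equality case, suppose $f_{d-1}(\K) = U(\K)$. Then every inequality above is tight, which forces both $f_{d-1}(\K-v) = U(\K-v)$ for every $v$ (so each $\K-v$ is Cohen-Macaulay with a pure resolution, by the hypothesis) and $M_i(\K-v) = M_i(\K)$ for all $1 \leq i \leq n-d-1$. Hochster's formula together with the nonnegativity of Betti numbers then implies $\tilde{\beta}_{j-i-1}(\K[W]) = 0$ for every proper subset $W \subsetneq V(\K)$ with $|W|=j$ unless $j = M_i(\K)$ with $1 \leq i \leq n-d-1$. Hence $\beta_{i,j}(\K) = 0$ for $j < n$ except when $j = M_i(\K)$ with $1 \leq i \leq n-d-1$. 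Since $\beta_{i,n}(\K) = \tilde{\beta}_{n-i-1}(\K)$, proving $\K$ is Cohen-Macaulay with a pure resolution reduces to verifying $\tilde{H}_p(\K) = 0$ for all $0 \leq p < d-1$.

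This final vanishing is the main obstacle, and I would establish it by an iterated Mayer-Vietoris argument. Decomposing $\K$ as the union of $\K-v$ and the closed star of $v$ (a cone, hence contractible) with intersection $\lk_\K(v)$, and using $\tilde{H}_q(\K-v) = 0$ for $q < d-1$, the Mayer-Vietoris sequence collapses to $\tilde{H}_p(\K) \cong \tilde{H}_{p-1}(\lk_\K(v))$ for all $p < d-1$. The same argument applies recursively one dimension down: whenever $u \notin F$ and $F \in \K-u$, we have $\lk_\K(F) - u = \lk_{\K-u}(F)$, which is Cohen-Macaulay of dimension $d-1-|F|$ because $\K-u$ is. Peeling off vertices one at a time along a chain of faces yields
\[ \tilde{H}_p(\K) \cong \tilde{H}_{-1}(\lk_\K(F_{p+1})) \]
for any face $F_{p+1} \in \K$ of size $p+1$. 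The existence of a $(d-1)$-face of $\K$ (guaranteed by $\tilde{H}_{d-1}(\K) \neq 0$) lets me choose $F_{p+1}$ inside such a face; then $F_{p+1}$ is not a facet, $\lk_\K(F_{p+1})$ contains a vertex, and $\tilde{H}_{-1}(\lk_\K(F_{p+1})) = 0$. Combined with the Betti table analysis, this completes the proof.
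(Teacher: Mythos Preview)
Your argument is correct. The inequality step is exactly the paper's proof: the face-counting identity, the bound $M_i(\K-v)\le M_i(\K)$, and $M_{n-d}(\K)=n$ from $\tilde H_{d-1}(\K)\neq 0$.

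For the equality case the paper takes a shorter route than yours. From $M_i(\K-v)=M_i(\K)$ and the pure resolution of each $\K-v$ it observes that $A(j,Q_i)(\K-v)=0$ for all $0\le j<i\le d-1$; since $Q_i(\K)<n$ in this situation, every $W$ of size $Q_i$ sits inside some $\K-v$, so $A(j,Q_i)(\K)=0$ as well, and Proposition~\ref{UPureRes} finishes the job in one line. Your approach instead bypasses Proposition~\ref{UPureRes}: you read off $\beta_{i,j}(\K)$ for $j<n$ directly from Hochster's formula on the $\K-v$, and then handle the remaining column $j=n$ by an iterated Mayer--Vietoris peeling that uses the Cohen--Macaulayness of each $\K-u$ to force $\tilde H_p(\K)=0$ for $p<d-1$. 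This is a genuinely different and self-contained argument; it is longer, but it makes transparent exactly where the Cohen--Macaulay property of the deletions is consumed, whereas the paper's version hides this inside the $A(j,Q_i)$ formalism. Both approaches ultimately rest on the same information (pure resolutions of the $\K-v$ with the same shifts as $\K$); the paper packages it through the skip/average machinery of Section~\ref{SkipSection}, while you unpack it homologically.
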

\begin{proof}
Since $\tilde{H}_{d-1}(\K) \neq 0$, $M_{n-d}(\K) = n$.  Since every $(d-1)$-dimensional face of $\K$ contains $d$ vertices,
\begin{equation}
\label{HomRedEquation}
f_{d-1}(\K) = \frac{1}{n-d}\sum_{v \in V(\K)}f_{d-1}(\K-v)
\end{equation}
$$
\leq \frac{1}{n-d}\sum_{v \in V(\K)}\frac{\prod_{i=1}^{n-d-1}M_i(\K-v)}{(n-d-1)!} \leq \frac{\prod_{i=1}^{n-d} M(\K)}{(n-d)!}.
$$

Now suppose $\K$ attains the upper bound.  Then each inequality above holds with equality, and so $\K-v$ also has dimension $d-1$ and attains the upper bound for all $v \in V(\K)$.  Furthermore in this case, $M_i(\K-v) = M_i(\K)$ for all $v \in V(\K)$ and $1 \leq i \leq n-d-1$.  By hypothesis, all $\K-v$ are Cohen-Macaulay and have pure resolutions.  It follows that $A(i,Q_j)(\K-v) = A(i,Q_j)(\K) = 0$ for $1 \leq i < j \leq d-1$ and $v \in V(\K)$.  Then $\K$ is Cohen-Macaulay and has a pure resolution by Proposition \ref{UPureRes}.
\end{proof}

\proofof{Theorem \ref{LowSkips}} Suppose $M_1 \geq d-1$.  First we will show that we may assume without loss of generality that $\K$ is $(d-2)$-neighborly.  Suppose $\K$ is not $(d-2)$-neighborly, and that $F$ is a minimal non-face of $\K$ with at most $d-2$  vertices.  Let $\K' := \K \cup \{F\}$.  Adding $F$ can only increase homology of induced subcomplexes in dimension $|F|-1$, decrease it in dimension $|F|-2$, and does not change homology in other dimensions.  Thus if $W \subset V(\K) = V(\K')$, then $\tilde{H}_p(\K'[W]) = \tilde{H}_p(\K[W])$ if $p \geq d-2$.  Since $M_1(\K) \geq d-1$, $M_i(\K) \geq d-2+i$ for $1 \leq i \leq n-d$, and for a fixed $i$ there exists $W \subset V(\K)$ such that $|W| = M_i$ and $\tilde{H}_{M_i-i-1}(\K[W]) \neq 0$.  Since $M_i-i-1 \geq d-3$, $\tilde{H}_{M_i-i-1}(\K'[W]) \geq \tilde{H}_{M_i-i-1}(\K[W]) \neq 0$, and it follows that $M_i(\K') \geq M_i(\K)$.  If $M_i(\K') > M_i(\K)$, then for some $W$ with $|W| = M_i(\K')$, $\tilde{H}_{M_i(\K')-i-1}(\K'[W]) \neq 0$ while $\tilde{H}_{M_i(\K')-i-1}(\K[W]) = 0$.  However, this is a contradiction since in this case, $M_i(\K')-i-1 = d-3$ and $M_{1}(\K) \geq d-1$.  We conclude that $M_i(\K) = M_i(\K')$.  Also, $f_{d-1}(\K) = f_{d-1}(\K')$.  Hence, we may assume that $\K$ is $(d-2)$-neighborly.

Next we will assume, without loss of generality, that $\tilde{H}_{d-1}(\K) = 0$.  Suppose $\tilde{H}_{d-1}(\K) \neq 0$, and let $v \in V(\K)$.  Then $\K-v$ is a $(d-2)$-neighborly induced subcomplex of $\K$, and $M_1(\K-v) \geq d-1$.  By induction on $n$, $\K-v$ satisfies the multiplicity upper bound conjecture.  Hence $\K$ satisfies the multiplicity upper bound conjecture by Lemma \ref{HomRed}.

Next we want to assume, without loss of generality, that $\tilde{H}_{d-3}(\K;k) = 0$.  Since $\K$ is $(d-2)$-neighborly, $\tilde{H}_{d-3}(\K;k)$ is generated (not necessarily minimally) by $\{\partial F: F \not\in \K, |F| = d-1$\}.  Suppose that for some $F \subset V(\K), |F| = d-1$ and $[\partial F] \neq 0$ in $\tilde{H}_{d-3}(\K;k)$.  Then $F \not \in \K$.  Let $\K' := \K \cup \{F\}$.  We will show that for each $1 \leq i \leq d-1$, $M_i(\K') = M_i(\K)$.  By repeating this procedure we may assume without loss of generality $\tilde{H}_{d-3}(\K;k) = 0$.

First we show that $Q_i(\K) \geq Q_i(\K')$ for all $1 \leq i \leq d-1$.  Since $\K'$ is $(d-2)$-neighborly, $Q_i(\K') = Q_i(\K) = i+1$, for $i \leq d-3$.  As $|F| = d-1$, we have $\tilde{H}_{d-2}(\K[W]) \leq \tilde{H}_{d-2}(\K'[W])$ and  $\tilde{H}_{d-1}(\K[W]) = \tilde{H}_{d-1}(\K'[W])$ for all $W \subset V(\K)$.  It follows that $Q_{d-2}(\K) \geq Q_{d-2}(\K')$ and $Q_{d-1}(\K) = Q_{d-1}(\K')$.

It only remains to be shown $Q_{d-2}(\K') = Q_{d-2}(\K)$, which follows if, for any $W \subset V(\K)$, $\tilde{H}_{d-2}(\K'[W]) = \tilde{H}_{d-2}(\K[W])$.  Assume $F \subset W$.  Since $[\partial F] \neq 0$ in $\tilde{H}_{d-3}(\K)$, $[\partial F] \neq 0$ in $\tilde{H}_{d-3}(\K[W])$, while $[\partial F] = 0$ in $\tilde{H}_{d-3}(\K'[W])$ by $F \in \K'[W]$.  The Euler-Poincar\'{e} formula together with $\tilde{H}_{d-3}(\K'[W]) \neq \tilde{H}_{d-3}(\K[W])$ yields $\tilde{H}_{d-2}(\K'[W]) = \tilde{H}_{d-2}(\K[W])$.

Since $\K$ is $(d-2)$-neighborly, $A(p,m) = 0$ for $p \leq d-4$ and $1 \leq m \leq n$.  Also by $\tilde{H}_{d-1}(\K;\field) = 0$, $Q_{d-1} = n$ and $A(d-3,Q_{d-1}) = 0$.  In Equation (\ref{UBIEqivEquation}), all $A(i,Q_j)$ terms with negative coefficients vanish.  Hence $\K$ satisfies the multiplicity upper bound conjecture by Theorem \ref{UBIEqiv}. \endproof

The last step of the proof also follows from Theorem 1.5 of \cite{HerzSr98}.

\begin{corollary}
Let $\K$ be as in Theorem \ref{LowSkips}.  If $\K$ attains the upper bound with equality, then $\K$ is Cohen-Macaulay and has a pure resolution.
\end{corollary}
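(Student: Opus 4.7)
The plan is to re-run the proof of Theorem \ref{LowSkips} while tracking when each inequality becomes an equality; every reduction there preserves both $e(\K)$ and $U(\K)$, so the hypothesis $e(\K)=U(\K)$ propagates. I proceed by induction on $n$.

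The core of the argument addresses the ``reduced'' scenario in which $\K$ is $(d-2)$-neighborly with $\tilde{H}_{d-1}(\K)=\tilde{H}_{d-3}(\K)=0$. In that scenario, the left-hand side of Equation (\ref{UBIEqivEquation}) simplifies to the nonnegative expression $c_{d-2}\,A(d-3,Q_{d-2})+c_{d-1}\,A(d-2,Q_{d-1})$, every other summand having vanished by neighborliness ($A(j,\cdot)=0$ for $j\leq d-4$) and by $\tilde{H}_{d-3}(\K)=0$ (giving $A(d-3,n)=0$). Equality $e(\K)=U(\K)$ forces this expression to be zero; since $c_{d-2},c_{d-1}>0$ and each $A$ is nonnegative, both summands vanish, so $A(j,Q_i)=0$ for all $0\leq j<i<d$, and Proposition \ref{UPureRes} yields that $\K$ is Cohen-Macaulay with a pure resolution.

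For the case $\tilde{H}_{d-1}(\K)\neq 0$, dispatch via the ``furthermore'' clause of Lemma \ref{HomRed} and induction on $n$: each $\K-v$ inherits $(d-2)$-neighborliness (hence $M_1(\K-v)\geq d-1$), so by induction $\K-v$ is Cohen-Macaulay with a pure resolution whenever it attains the upper bound, and Lemma \ref{HomRed} promotes this equivalence from the $\K-v$'s to $\K$.

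The delicate point, and the main obstacle, is that the two face-adding reductions of Theorem \ref{LowSkips} (enforcing $(d-2)$-neighborliness or killing $\tilde{H}_{d-3}$) must already be trivial in the equality case; otherwise the conclusion above would apply to an enlarged complex $\K^*$ rather than to $\K$ itself. If such a reduction is applied, choose an inclusion-maximal face $F$ among those added to form $\K^*$: since $F$ is a non-face of the preceding complex, it is not contained in any face of $\K$, and by maximality in no other added face, so $F$ remains a facet of $\K^*$ of dimension $|F|-1<d-1$. Running the above analysis for $\K^*$ (invoking induction on $\K^*-v$ for any $v\notin F$ when $\tilde{H}_{d-1}(\K^*)\neq 0$) would then force $\K^*$ or some $\K^*-v$ to be Cohen-Macaulay, contradicting the existence of the low-dimensional facet $F$ (which persists in $\K^*-v$ whenever $v\notin F$). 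Hence these reductions must have been trivial to begin with, so $\K$ itself satisfies the hypotheses of the reduced scenario, and the preceding arguments apply to $\K$ directly. Once this rigidity is in place, the remainder is sign accounting transcribed from Theorem \ref{LowSkips}.
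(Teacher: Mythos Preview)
Your argument is correct and takes essentially the same route as the paper: both show that in the reduced scenario equality forces every $A(j,Q_i)=0$ so Proposition~\ref{UPureRes} applies, and that any nontrivial face-addition leaves a facet of dimension $<d-1$ in the enlarged complex, contradicting Cohen--Macaulayness (the paper phrases this last step simply as ``if $\K\neq\K'$ then $\K'$ is not pure''). One minor slip: the claim ``each $\K-v$ inherits $(d-2)$-neighborliness'' in your second paragraph presumes $\K$ itself is $(d-2)$-neighborly, which $M_1\geq d-1$ alone does not give---but your third paragraph, applied to the enlarged $\K^*$ (which \emph{is} $(d-2)$-neighborly by construction), already covers that scenario; and since adding faces of dimension $\leq d-2$ cannot change $\tilde H_{d-1}$, your separate treatment of the sub-case $\tilde H_{d-1}(\K^*)\neq 0$ there is in fact unnecessary.
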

\proof By Lemma \ref{HomRed}, we may assume $\tilde{H}_{d-1}(\K)=0$.  First construct $\K_1$ from $\K$ by adding all faces of dimension $d-3$ or less.  If $\tilde{H}_{d-3}(\K_1) \neq 0$, add a face $F$ to $\K_1$ such that $[F] \neq 0$ in $\tilde{H}_{d-3}(\K_1)$.  Construct $\K'$ from $\K_1$ by adding such faces until $\tilde{H}_{d-3}(\K') = 0$.

From the proof of Theorem \ref{LowSkips}, $\K'$ attains the upper bound if and only if $\K$ attains the upper bound.  In Equation (\ref{UBIEqivEquation}), all $A(i,Q_j)(\K')$ terms with negative coefficients vanish, so $\K'$ attains the upper bound if and only if $A(i,Q_j)(\K') = 0$ for $1 \leq i < j \leq d-1$.  By Proposition \ref{UPureRes}, $\K'$ is Cohen-Macaulay and has a pure resolution.

If $\K \neq \K'$, then $\K'$ is not pure.  Hence $\K'$ does not attain the upper bound, and $\K$ does not attain the upper bound. \endproof

\section{Upper Bound on Three-dimensional complexes}
\label{3DSection}
For this section, $\K$ always refers to a three-dimensional simplicial complex, and $\K$ has upper skips $Q_1,Q_2,Q_3$.  Our main theorem is the following.
\begin{theorem}
\label{DimThree}
A three-dimensional simplicial complex $\K$ satisfies the multiplicity upper bound conjecture.  Furthermore, if $\K$ attains the multiplicity upper bound, then $\K$ is Cohen-Macaulay and has a pure resolution.
\end{theorem}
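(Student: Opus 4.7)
The plan is to apply Theorem \ref{UBIEqiv} together with the reductions from Theorem \ref{LowSkips} and Lemma \ref{HomRed}, and then to control the two negative-sign terms in the resulting inequality by double-counting arguments tailored to flag $3$-dimensional complexes. Set $d = 4$ and let the upper skips be $1 = Q_0 < Q_1 < Q_2 < Q_3 \leq n$. By Theorem \ref{LowSkips}, the case $M_1 \geq 3$ is already handled, so I may assume $M_1 = 2$, i.e.\ $\K$ is flag, which forces $Q_1 \geq 3$. By Lemma \ref{HomRed} and induction on $n$, I may also assume $\tilde{H}_3(\K;\field) = 0$, which gives $Q_3 = n$. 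Theorem \ref{UBIEqiv} then reduces the multiplicity upper bound to
\begin{equation*}
c_1 A(0,Q_1) + c_2\bigl(A(1,Q_2) - A(0,Q_2)\bigr) + c_3\bigl(A(0,Q_3) - A(1,Q_3) + A(2,Q_3)\bigr) \geq 0,
\end{equation*}
in which the only negative-sign terms are $-c_2 A(0,Q_2)$ and $-c_3 A(1,Q_3)$. Since $Q_3 = n$, one has $A(j, Q_3) = \tilde\beta_j(\K;\field)$, so the $i = 3$ bracket equals $-\tilde\chi(\K)$ by the Euler--Poincar\'e relation.

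For the first negative term, I would use a double-counting argument: every $W \subset V(\K)$ with $|W| = Q_2$ and $\tilde\beta_0(\K[W]) = k$ decomposes $\K[W]$ into $k+1$ connected components, and every $W' \subset W$ of size $Q_1$ meeting at least two components satisfies $\tilde\beta_0(\K[W']) \geq 1$. Counting pairs $(W, W')$ in two ways yields $A(0, Q_1) \geq \alpha\, A(0, Q_2)$ for an explicit $\alpha = \alpha(Q_1, Q_2, n) > 0$. Comparing with the ratio $c_2/c_1$ from the formula displayed just after Theorem \ref{UBIEqiv}, and using $Q_1 \geq 3$, should give $c_1 A(0, Q_1) \geq c_2 A(0, Q_2)$.

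For the second negative term, observing that $A(1, Q_3) = \tilde\beta_1(\K)$, I would relate $\tilde\beta_1(\K)$ to the Betti averages at size $Q_2$ by a Mayer--Vietoris or vertex-covering argument: for suitable decompositions $V(\K) = W_1 \cup W_2$ with $|W_1|, |W_2|$ close to $Q_2$, $\tilde\beta_1(\K)$ is bounded by $\tilde\beta_1(\K[W_1])$, $\tilde\beta_1(\K[W_2])$, and $\tilde\beta_0(\K[W_1 \cap W_2])$. Averaging over all such decompositions and using the flag hypothesis to constrain the links should yield $c_3 A(1, Q_3) \leq c_2 A(1, Q_2) + \text{(terms)}$, where any surplus is absorbed into the already-controlled slack $c_1 A(0, Q_1) - c_2 A(0, Q_2)$.

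The main obstacle will be making the second bound simultaneously sharp and sign-correct, since a $1$-cycle of $\K$ need not imprint on every $Q_2$-vertex induced subcomplex and the covering argument must be arranged so that each cycle is charged exactly once; once achieved, positivity reduces to checking the explicit formulas for $c_j/c_i$ under $Q_1 \geq 3$ and $Q_3 = n$. For the equality statement, if equality holds in the reformulated inequality then both double-counting arguments are tight, forcing $A(0, Q_2) = A(1, Q_3) = 0$, and the positively weighted averages $A(0, Q_1), A(1, Q_2), A(0, Q_3), A(2, Q_3)$ must vanish as well; Proposition \ref{UPureRes} then yields that $\K$ is Cohen-Macaulay with a pure resolution.
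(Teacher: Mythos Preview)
Your overall strategy---reduce via Lemma~\ref{HomRed} to $Q_3=n$, invoke Theorem~\ref{UBIEqiv}, and then dominate the two negative terms separately---is the same skeleton the paper uses. However, Step~4 of your plan is not merely incomplete; the inequality you aim to prove is \emph{false}. Take $\K$ to be the disjoint union of two copies of the $3$-simplex ($n=8$, $d=4$). This is a flag complex with $\tilde H_3(\K)=0$, and since every induced subcomplex has vanishing homology above degree~$0$, one computes $M_i=i+1$ for $1\le i\le 4$, hence $(Q_1,Q_2,Q_3)=(6,7,8)$. Every $6$- or $7$-vertex induced subcomplex meets both cliques, so $A(0,Q_1)=A(0,Q_2)=1$. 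But
\[
c_1 = 7\cdot 6\cdot 8\cdot 7\cdot(8-7)=2352, \qquad c_2 = 6\cdot 5\cdot 8\cdot 7\cdot(8-6)=3360,
\]
so $c_1A(0,Q_1)<c_2A(0,Q_2)$. No double-counting argument can yield your claimed bound; what actually holds (and what the paper establishes as Lemma~\ref{AZero}, via an induction on the number of edges in the $1$-skeleton) is the \emph{three-term} inequality $c_1A(0,Q_1)+c_3A(0,Q_3)\ge c_2A(0,Q_2)$. The $c_3A(0,Q_3)$ term is indispensable exactly when $\K$ is disconnected, and your plan to ``absorb surplus into the slack $c_1A(0,Q_1)-c_2A(0,Q_2)$'' therefore collapses, since that slack can be negative.

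Step~5 is too schematic to evaluate, but you should be warned that the paper's treatment of the $A(1,\cdot)$ terms is the heart of the argument and is far more delicate than a Mayer--Vietoris covering bound. The paper does \emph{not} compare $A(1,n)$ directly to $A(1,Q_2)$; instead it first proves the case $R_2=n-1$ (Lemma~\ref{CloseQ3Q2}) and then inducts down to $R_2=Q_2$ (Lemma~\ref{A0A1}). Even the base case $R_2=n-1$ requires: (i) removing maximal edges (Lemma~\ref{LooseEdge}), (ii) a topological lemma (Lemma~\ref{TopDis}) showing that if $c_3A(1,n)>c_2A(1,n-1)$ then some $2$-connected component has a topological cut pair, proved via cycle bases and segment decompositions, and (iii) a bespoke ``shifting'' operation that preserves the full linear combination in Theorem~\ref{UBIEqiv} while strictly decreasing $\tilde\beta_1(\K)$. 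Your proposed averaging over decompositions $V(\K)=W_1\cup W_2$ does not obviously produce the correct constants, and more fundamentally cannot be combined with Step~4 once the latter fails.
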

The outline of the proof is as follows.  We show
\begin{equation}
\label{Main3D}
c_3(A(0,Q_3)-A(1,Q_3)-A(2,Q_3)) + c_2(A(1,Q_2)-A(0,Q_2)) + c_1A(0,Q_1) \geq 0,
\end{equation}
and that if (\ref{Main3D}) is an equality, then each $A(i,Q_j)=0$.  This implies Theorem \ref{DimThree} by Theorem \ref{UBIEqiv}.  By Lemma \ref{HomRed}, we may assume $\K$ is $3$-Leray, or that $Q_3 = f_0(\K) := n$.  More generally, we will show that for integers $R_1 \geq Q_1, Q_2 \leq R_2 < n$, and $R_3=Q_3$, the following holds:
\begin{equation}
\label{Main3DR}
c_3(A(0,R_3)-A(1,R_3)) + c_2(A(1,R_2)-A(0,R_2)) + c_1A(0,R_1) \geq 0.
\end{equation}
In (\ref{Main3DR}) and in all the following lemmas that use the $R_i$, take
$$c_i = \prod_{1 \leq u \leq 3, u \neq i}(R_u(R_u-1))\prod_{1 \leq u < v \leq 3, u,v \neq i}(R_v-R_u).$$
We prove the above result by induction on $R_3 - R_2$, and we may then assume $R_2 = R_3-1=n-1$.  Then we will show $c_3A(0,R_3) + c_2A(0,R_2) + c_1A(0,R_1) \geq 0$, with equality only when $A(0,R_3) = A(0,R_2) = A(0,R_1) = 0$.  Next we argue that we may assume, without loss of generality, that no edge of $\K$ is a maximal face.  Then we will show that one of the following conditions holds: $A(1,R_3)=0$, $c_2A(0,R_2) > c_3(A(1,R_3)$, or $\K$ contains two vertices $p$ and $q$ such that $||\K||-\{p,q\}$ is disconnected (here $||\K||$ denotes the \textit{geometric realization} of $\K$).  Finally, in the latter case, we use a shifting operation on $\K$ that preserves $c_3(A(0,R_3)-A(1,R_3)) + c_2(A(1,R_2)-A(0,R_2)) + c_1A(0,R_1)$ and strictly reduces $A(1,R_3)$.  This proves Theorem \ref{DimThree}.

Our main effort will be to prove the following lemma.

\begin{lemma}
\label{CloseQ3Q2}
Let $R_1 \geq Q_1, R_2 = Q_3-1$, and $R_3 = Q_3 = n$.  Then $c_3(A(0,R_3)-A(1,R_3)) + c_2(A(1,R_2)-A(0,R_2)) + c_1A(0,R_1) \geq 0$.  Also, equality is attained if and only if each $A(i,R_j) = 0$.
\end{lemma}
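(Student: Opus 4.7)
The plan is to split Equation (\ref{Main3DR}) into a zeroth-Betti contribution $c_3 A(0,R_3) - c_2 A(0,R_2) + c_1 A(0,R_1)$ and a first-Betti contribution $-c_3 A(1,R_3) + c_2 A(1,R_2)$, and handle them in that order. The specialization $R_2 = R_3 - 1 = n-1$ makes the ratios $c_2/c_3$ and $c_1/c_3$ tractable via the explicit formula for $c_j/c_i$ given after Theorem \ref{UBIEqiv}. Since $A(0,m)$ is the average of $\tilde{\beta}_0(\K[W])$ over $W \subseteq V(\K)$ with $|W|=m$, I expect the zeroth-Betti inequality to follow from a double-count relating $\tilde{\beta}_0(\K[W])$ across the three sizes: every component pulled out by restriction of $\K$ to a set of size $R_2=n-1$ comes either from a global component of $\K$ or from a local obstruction that persists down to size $R_1$, and the $c_i$-weighted identity then yields non-negativity, with equality forcing each $A(0,R_j)=0$.

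For the first-Betti contribution, I would invoke the Mayer--Vietoris sequence for $\K = (\K - v) \cup \st_\K(v)$ (intersecting in $\lk_\K(v)$, with $\st_\K(v)$ contractible) to get $\tilde{\beta}_1(\K) \leq \tilde{\beta}_1(\K - v) + \tilde{\beta}_0(\lk_\K(v))$ for every $v$. Averaging over $v \in V(\K)$ gives
$$A(1,R_3) \leq A(1,R_2) + \tfrac{1}{n}\sum_{v \in V(\K)} \tilde{\beta}_0(\lk_\K(v)),$$
so the first-Betti deficit $c_3 A(1,R_3) - c_2 A(1,R_2)$ is controlled by $\sum_v \tilde{\beta}_0(\lk_\K(v))$. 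Following the outline, I would then reduce to the case in which no edge of $\K$ is a maximal face (a local modification that leaves every term in Equation (\ref{Main3DR}) invariant) and establish the trichotomy: either $A(1,R_3) = 0$ (done), or $c_2 A(0,R_2) \geq c_3 A(1,R_3)$ (so the zeroth-Betti surplus already absorbs the deficit), or $\K$ has a 2-vertex cut $\{p,q\}$. The third alternative is forced because, when no link is disconnected and $\K$ has no 2-vertex cut, the Mayer--Vietoris estimates are strict enough to push us into the second alternative.

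The main obstacle will be the 2-vertex cut case: constructing a shifting operation that, given a cut $\{p,q\}$, produces a new three-dimensional complex $\K^{\#}$ with strictly smaller $\tilde{\beta}_1$ but with the same values of $f_3(\K)$, $A(0,R_1)$, $A(0,R_2)$, $A(0,R_3)$, and $A(1,R_2)$. Such a shift leaves the left-hand side of Equation (\ref{Main3DR}) unchanged while strictly decreasing $A(1,R_3)$, driving an induction on $\tilde{\beta}_1(\K)$ that terminates at the $A(1,R_3)=0$ base case. Proving invariance of the five quantities above requires a careful vertex-subset-by-vertex-subset comparison across each of the sizes $R_1$, $R_2$, $R_3$, and this is where the bulk of the combinatorial work lies. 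The equality characterization then follows by running the argument in reverse: an equality in Equation (\ref{Main3DR}) forces tightness of the zeroth-Betti inequality (so each $A(0,R_j)=0$), tightness of the Mayer--Vietoris estimates (so every link is connected), and the absence of any admissible shift, which collapses the remaining terms to force $A(1,R_2) = A(1,R_3) = 0$ as well.
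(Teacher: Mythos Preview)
Your overall skeleton (split into zeroth- and first-Betti pieces, handle the first via a trichotomy, iterate a shift in the bad case) matches the paper, but two of the load-bearing steps break down.

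First, your invariance claim for the shift is internally inconsistent. The left-hand side of (\ref{Main3DR}) contains the term $-c_3 A(1,R_3)$; if $A(0,R_1),A(0,R_2),A(0,R_3),A(1,R_2)$ were all preserved while $A(1,R_3)$ strictly dropped, that left-hand side would strictly \emph{increase}, not stay fixed. In fact the lower $A$'s are not individually invariant under the shift. The paper's mechanism is different: the shift preserves $f_3$, every $A(p,r)$ with $p\geq 2$, and $A(1,R_1)$ (which vanishes since $R_1\geq Q_1$); invariance of the full alternating sum then comes from the Euler--Poincar\'e identity of Lemma~\ref{HConst}, which writes $\sum_{i,j}(-1)^{i+j}c_i A(j,R_i)$ as a function of $f_3$ alone. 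The $A(0,\cdot)$ and $A(1,\cdot)$ terms genuinely move under the shift; only their signed combination is pinned down.

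Second, the trichotomy is not established. Alternative (b) must read $c_2 A(1,R_2)\geq c_3 A(1,R_3)$; your version with $A(0,R_2)$ is unusable, since that term enters the expression with a \emph{minus} sign and supplies no ``surplus'' to spend. More substantively, averaging the Mayer--Vietoris inequality gives only $A(1,n)\leq A(1,n-1)+\tfrac{1}{n}\sum_v\tilde\beta_0(\lk_\K v)$, whereas the corrected (b) requires $A(1,n)\leq\tfrac{c_2}{c_3}A(1,n-1)$; passing between the two amounts to controlling $\sum_v\tilde\beta_0(\lk_\K v)$ against $3\tilde\beta_1(\K)$. Your argument covers only the extreme case in which every link is connected, but the step from ``some link is disconnected'' to ``there is a 2-cut'' is exactly where the difficulty lies, and Mayer--Vietoris alone does not bridge it. The paper closes this gap with a cycle-graph argument (Lemmas~\ref{TopDisHom}--\ref{TopDisSegment}): one realises a basis of $\tilde H_1(\K)$ by simple cycles whose union admits a decomposition into fewer than $3\tilde\beta_1(\K)$ arcs, and then pigeonholes two vertices with disconnected link onto a common arc to exhibit the 2-cut.
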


Assuming Lemma \ref{CloseQ3Q2}, we show how Theorem \ref{DimThree} follows.

\begin{lemma}
\label{A0A1}
Choose integers $R_1 \geq Q_1$, $R_1 < R_2 < n$, and $R_3=Q_3=n$.  Then $c_3(A(0,R_3)-A(1,R_3)) + c_2(A(1,R_2)-A(0,R_2)) + c_1A(0,R_1) \geq 0$.  Also, equality is attained if and only if each $A(i,Q_j) = 0$.
\end{lemma}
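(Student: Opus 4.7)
The plan is induction on the gap $R_3 - R_2$, with the base case $R_3 - R_2 = 1$ given by Lemma \ref{CloseQ3Q2}. For the inductive step, assuming $R_3 - R_2 \ge 2$ and that the lemma holds with $R_2$ replaced by $R_2 + 1$, I will combine two auxiliary inequalities to deduce the desired one.

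The first inequality $(\ast)$ comes from averaging. For each $W \subset V(\K)$ with $|W| = R_2 + 1$, the complex $\K[W]$ is again $3$-Leray, so Lemma \ref{CloseQ3Q2} applies to $\K[W]$ with parameters $(R_1, R_2, R_2+1)$; let $c'_1, c'_2, c'_3$ denote the resulting coefficients. The key averaging identity is
\[ \frac{1}{\binom{n}{R_2+1}} \sum_{|W| = R_2+1} A(p, m)(\K[W]) \;=\; A(p, m)(\K) \qquad (m \le R_2 + 1), \]
which comes from a direct double-count of nested pairs $W' \subset W$. Averaging Lemma \ref{CloseQ3Q2} over all such $W$ produces $(\ast)$:
\[ c'_3\bigl(A(0, R_2{+}1) - A(1, R_2{+}1)\bigr) + c'_2\bigl(A(1, R_2) - A(0, R_2)\bigr) + c'_1 A(0, R_1) \;\ge\; 0. \]
The second inequality $(\ast\ast)$ comes from the inductive hypothesis applied to $(R_1, R_2{+}1, R_3)$ with coefficients $c''_1, c''_2, c''_3$.

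Now form $c''_2 \cdot (\ast) + c'_3 \cdot (\ast\ast)$. The $A(\cdot, R_2{+}1)$ terms cancel exactly, since both appear with absolute coefficient $c'_3 c''_2$ and opposite signs. After dividing by $c'_2 > 0$, I expect the result to take the form
\[ c_3\bigl(A(0, R_3) - A(1, R_3)\bigr) + c_2\bigl(A(1, R_2) - A(0, R_2)\bigr) + c_1 A(0, R_1) \;\ge\; 0 \]
for the target parameters $(R_1, R_2, R_3)$, which is what we want. Using $R_3 = n$ and the explicit product formulas for the $c_i$, the identities $c'_3 = c_3$, $c''_2 = c_2$, and $c''_3 = c'_2$ are immediate; matching the $A(0, R_1)$ coefficient reduces to the polynomial identity
\[ (n - R_1) + (R_2 - R_1)(n - R_2 - 1) \;=\; (n - R_2)(R_2 + 1 - R_1), \]
which expands directly.

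For the equality clause, if the final inequality is tight then both $(\ast)$ and $(\ast\ast)$ are tight (by positivity of $c''_2$ and $c'_3$). Tightness of $(\ast)$, as an average of nonnegative quantities (one per $W$), forces every individual application of Lemma \ref{CloseQ3Q2} to $\K[W]$ to be tight, and the base-case equality clause then makes every $A(i, R_j)(\K[W]) = 0$; averaging returns $A(i, R_j)(\K) = 0$. Combined with the inductive equality clause applied to $(\ast\ast)$, all relevant $A(i, R_j)(\K)$ vanish. The main obstacle I anticipate is the algebraic bookkeeping that produces the coefficient identities above; once the polynomial identity is in hand, the induction assembles itself, modulo some care in checking that Lemma \ref{CloseQ3Q2} genuinely applies to each $\K[W]$.
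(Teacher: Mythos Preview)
Your proposal is correct and follows essentially the same approach as the paper: induction on $R_3 - R_2$ with base case Lemma~\ref{CloseQ3Q2}, combining the inductive hypothesis at $(R_1,R_2{+}1,R_3)$ with an averaged instance of Lemma~\ref{CloseQ3Q2} on all $(R_2{+}1)$-vertex induced subcomplexes. You supply more detail than the paper on the coefficient identities and on the equality clause, but the argument is the same.
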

\proof We use induction of $R_3-R_2$.  By Lemma \ref{CloseQ3Q2}, the result holds for $R_3 = R_2 - 1$.  Now suppose the result is proven for $R_3-R_2 \leq k-1$, and suppose $R_3-R_2 = k$.  Applying the inductive hypothesis with values $(R_3,R_2+1,R_1)$ and dividing the equation by $c_3$ yields $$(A(0,R_3)-A(1,R_3))+\frac{R_3(R_3-1)(R_3-R_1)}{(R_2+1)R_2(R_2-R_1+1)}(A(1,R_2+1)-A(0,R_2+1))$$ $$+\frac{R_3(R_3-1)(R_3-R_2+1)}{R_1(R_1-1)(R_2-R_1+1)}A(0,R_1) \geq 0.$$  Applying Lemma \ref{CloseQ3Q2} to all induced subcomplexes with $R_2+1$ vertices, using values $(R_2+1,R_2,R_1)$, and averaging the results yields $$(A(0,R_2+1)-A(1,R_2+1))+\frac{(R_2+1)(R_2-R_1+1)}{(R_2-1)(R_2-R_1)}(A(1,R_2)-A(0,R_2))$$ $$+\frac{(R_2+1)R_2}{R_1(R_1-1)(R_2-R_1)}A(0,R_1) \geq 0.$$  Take the appropriate linear combination of these inequalities to obtain the desired result.
\endproof

The multiplicity upper bound conjecture on $\K$ follows by taking $R_1 = Q_1$ and $R_2 = Q_2$ in Lemma \ref{A0A1}.  If $\K$ attains the multiplicity upper bound, then it follows that for all $0 \leq i \leq 1$ and $1 \leq j \leq 3$, $A(i,Q_j)=0$.  Hence, also $A(2,Q_3)=0$, and by Proposition \ref{UPureRes}, $\K$ has a pure resolution.

Our first step in the proof of Lemma \ref{CloseQ3Q2} is the following lemma.

\begin{lemma}
\label{AZero}
Choose integers $2 \leq R_1 < R_2 < R_3 = n$.  Then $c_2A(0,R_2) \geq c_1A(0,R_1) + c_3A(0,R_3)$.  Furthermore, equality occurs only if $A(0,R_3) = A(0,R_2) = A(0,R_1) = 0$.
\end{lemma}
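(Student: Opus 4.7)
The plan is to reformulate the inequality as a three-point convexity statement for the function $g(m) := A(0,m)/[m(m-1)]$. Dividing both sides of the claimed inequality by the positive product $R_1(R_1-1) R_2(R_2-1) R_3(R_3-1)$ and using the identity $R_3 - R_1 = (R_3 - R_2) + (R_2 - R_1)$, one sees that the claim is equivalent to a three-point comparison
\[
g(R_2) \quad \text{vs.} \quad \frac{R_3-R_2}{R_3-R_1}\, g(R_1) \;+\; \frac{R_2-R_1}{R_3-R_1}\, g(R_3),
\]
i.e., a convexity-type inequality for $g$ at the three chosen points.

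To analyze $g$, I would use the identity $\binom{n}{m} m(m-1) = n(n-1)\binom{n-2}{m-2}$, which reduces the claim (up to a positive constant) to the analogous inequality for $\tilde{g}(m) := t_m / \binom{n-2}{m-2}$, where $t_m := \sum_{|W|=m}\tilde{\beta}_0(\K[W])$. Using $\tilde{\beta}_0(\K[W]) = c(\Skel_1\K[W]) - 1$ and the standard decomposition of the component count by connected vertex sets, one obtains
\[
t_m \;=\; \sum_{\substack{U \neq \emptyset \\ \Skel_1\K[U]\text{ connected}}} \binom{n - |U| - |N(U)|}{m - |U|} \;-\; \binom{n}{m},
\]
where $N(U)$ denotes the exterior neighborhood of $U$ in $\Skel_1\K$. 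Each summand has a transparent interpretation: it counts the $m$-subsets $W$ for which $U$ is exactly a connected component of $\Skel_1\K[W]$.

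I would then verify the three-point inequality by analyzing each binomial summand $\binom{a}{m-k}/\binom{n-2}{m-2}$ directly as a rational function of $m$, together with the correction contribution $\binom{n}{m}/\binom{n-2}{m-2} = n(n-1)/[m(m-1)]$. For the equality case, the three-point inequality for each binomial summand is strict unless the summand vanishes at both extreme points $R_1$ and $R_3$; combined with the contribution of the correction term, this forces $A(0,R_i) = 0$ for each $i$.

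The main obstacle is that the individual binomial-ratio summands and the correction term do not have the same three-point convexity sign in general, so the decomposition does not immediately yield the claim by termwise comparison. A likely workaround is to establish first the consecutive case $(R_1, R_1+1, R_1+2)$ by a direct combinatorial identity relating $t_r, t_{r+1}, t_{r+2}$ (obtained, for example, by averaging over vertex deletions and exploiting how $\tilde{\beta}_0$ changes when a vertex is removed), and then induct on $R_3 - R_1$ via averaging over intermediate values to reduce the general case to the consecutive one, in the spirit of the induction used in Lemma \ref{A0A1}.
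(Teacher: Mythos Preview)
Your reduction to a three-point comparison for $g(m):=A(0,m)/[m(m-1)]$ is valid, and the component-counting identity $t_m=\sum_U \binom{n-|U|-|N(U)|}{m-|U|}-\binom{n}{m}$ is correct. But the proposal is not a proof. You yourself note that the decomposition does not give the inequality termwise, and your fallback plan---prove the consecutive case $(r,r+1,r+2)$, then induct in the style of Lemma~\ref{A0A1}---is left entirely unexecuted. That consecutive case is not a minor base step: by the very averaging-over-induced-subgraphs device you propose to use, knowing the consecutive case for \emph{every} graph is equivalent to the full lemma for every graph, so nothing has been reduced. No concrete identity linking $t_r,t_{r+1},t_{r+2}$ is produced, and the vertex-deletion heuristic you mention does not obviously close the gap, since deleting a vertex can move $\tilde\beta_0$ in either direction depending on whether the vertex is isolated, a cut vertex, or neither.

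The paper's argument is entirely different and never touches your decomposition. It passes to the $1$-skeleton $G$ and runs a reverse induction on the number of edges, with the complete graph as base case (all terms zero). At each step one adds a single edge to pass from $G$ to $G'$. If $G$ is disconnected, the new edge joins two components, and a direct computation shows that both sides of the inequality change by exactly the same amount. If $G$ is connected, one adds a non-edge $pq$ where $p$ and $q$ share a common neighbor $v$; a double count comparing the families $\mathcal S_i=\{W:|W|=R_i,\ \tilde\beta_0(G'[W])=\tilde\beta_0(G[W])-1\}$---using that every such $W$ must contain $p,q$ and avoid $v$---bounds the change in the $c_2$-term against the change in the $c_1$-term, while $A(0,R_3)$ is unchanged because $G$ is already connected. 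The common-neighbor trick is the engine of the proof and has no counterpart in your outline; the equality characterization then falls out by tracking when these inequalities are tight. Your convexity reformulation is conceptually clean and would yield a nicer statement if it could be completed, but as written only the paper's edge-by-edge induction constitutes a proof.
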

\proof If $G$ is the $1$-skeleton of $\K$ and $2 \leq r \leq n$, then $A(0,r)(G) = A(0,r)(\K)$.  Hence we will prove Lemma \ref{AZero} as a purely graph theoretic result: if $G$ is a graph with $n \geq 4$ vertices, then for integers $2 \leq R_1 < R_2 < R_3 = n$,
\begin{equation}
\label{AZeroMain}
c_2A(0,R_2)(G) \geq c_1A(0,R_1)(G) + c_3A(0,R_3)(G)
\end{equation}
with equality only when each term is zero.

We will prove the result by induction on the number of edges of $G$.  The base case is that $G$ is the complete graph on $n$ vertices, which has ${n(n-1)}/{2}$ edges.  Then $A(0,R_3) = A(0,R_2) = A(0,R_1) = 0$ and the result holds.  Now suppose that for any graph with more than $e$ edges, the lemma holds, and suppose $G$ has $e$ edges, with $e < {n(n-1)}/{2}$.  We will show that $c_2A(0,Q_2) \geq c_1A(0,Q_1) + c_3A(0,Q_3)$ for $G$ in two cases.

Case 1: $G$ is disconnected.  Choose vertices $p$ and $q$ in different components of $G$, and add the edge $pq$ to form $G'$.  Take $A(0,R_j)(G') := A'(0,R_j)$.  If $W \subset V(G)$, then $\tilde{\beta}_0(G'[W]) = \tilde{\beta}_0(G[W])$ if $\{p,q\} \not\subset W$, and $\tilde{\beta}_0(G'[W]) = \tilde{\beta}_0(G[W])-1$ if $\{p,q\} \subset W$.  Hence, $$A'(0,R_3) = A(0,R_3)-1, \quad \quad A'(0,R_2) = A(0,R_2)-\frac{{R_3 - 2 \choose R_2 - 2}}{{R_3 \choose R_2}},$$ $$A'(0,R_1) = A(0,R_1)-\frac{{R_3 - 2 \choose R_1 - 2}}{{R_3 \choose R_1}}.$$  Calculation shows that $$c_2(A(0,R_2) - A'(0,R_2)) = c_3(A(0,R_3) - A'(0,R_3)) + c_1(A(0,R_1) - A'(0,R_1)).$$ Also, $A'(0,R_2)>0$, which we can see by choosing a vertex subset of size $R_2$ that excludes either $p$ or $q$ and includes at least one vertex from each of two components of $G$.  Since $G'$ satisfies Equation (\ref{AZeroMain}) without equality by induction, $G$ also satisfies Equation (\ref{AZeroMain}) without equality.

Case 2: $G$ is connected.  Choose vertices $p$ and $q$ such that $pq$ is not an edge of $G$, and $p$ and $q$ share a common neighbor $v$.  Construct $G'$ from $G$ by adding the edge $pq$ and define $A'(0,R_j) := A(0,R_j)(G')$.  For $i = 1,2$, define $$\mathcal{S}_i := \{W \subset V(G): |W|=R_i, \tilde{\beta}_0(G'[W] = \tilde{\beta}_0(G[W]-1\}.$$ If $W \in \mathcal{S}_2$, then $\{p,q\} \subseteq W$.  If $W \in \mathcal{S}_2$, $W' \subset W$ and $|W'| = R_1$, then $W' \in \mathcal{S}_1$ if and only if $\{p,q\} \subseteq W'$.  For a given $W$, there are ${R_2-2 \choose R_1-2}$ such $W'$.  On the other hand, if $W' \in \mathcal{S}_1$, then $\{p,q\} \subseteq W'$, while $v \not\in W'$.  If $W' \subset W$ and $|W| = R_2$, $W \in \mathcal{S}_2$ only if $v \not\in W$.  For a given $W'$, there are at most $\frac{R_3-R_2}{R_3-R_1}{R_3-R_1 \choose R_2-R_1}$ such $W$.  

It follows that $$\frac{|\mathcal{S}_2|}{|\mathcal{S}_1|} \leq \frac{(R_3-R_1)R_1(R_1-1)}{(R_3-R_2)R_2(R_2-R_1)}\frac{{R_2 \choose R_1}}{{R_3-R_1 \choose R_2-R_1}}.$$ Consequently, $$A(0,R_2)-A'(0,R_2) \leq \frac{(R_3-R_1)R_1(R_1-1)}{(R_3-Q_2)R_2(R_2-R_1)}(A(0,R_1)-A'(0,R_1)),$$ or $c_2(A(0,R_2)-A'(0,R_2)) \leq c_1(A(0,R_1)-A'(0,R_1))$.  $A(0,R_3) = A'(0,R_3) = 0$ since $G$ is connected, and $c_2A'(0,R_2) \geq c_1A'(0,R_1) + c_3A'(0,R_3)$ by the inductive hypothesis.  It follows that $c_2A(0,R_2) \geq c_1A(0,R_1) + c_3A(0,R_3)$ as well.

It remains to be shown that if $c_2A(0,Q_2) = c_3A(0,Q_3) + c_1A(0,Q_1)$, then $A(0,Q_1) = A(0,Q_2) = A(0,Q_3) = 0$.  We assume this fact inductively on $G'$ and consider three cases.

Case 1: $A(0,R_2) = 0$.  Then $c_2A(0,R_2) = c_3A(0,R_3) + c_1A(0,R_1)$ if and only if $A(0,R_3)=A(0,R_2)=A(0,R_1)=0$.

Case 2: $A'(0,R_2) > 0$.  Then $c_2A(0,R_2) > c_3A(0,R_3) + c_1A(0,R_1)$ by the inductive hypothesis on $G'$.

Case 3: $A(0,R_2)(G) > 0$ and $A(0,Q_2)(G') = 0$.  Consider $W \subset V(G), |W| = R_2$ so that $G[W]$ is disconnected.  Then there exists  $W' \subset W, |W'| = R_1$ with either $p \not\in W'$ or $q \not\in W'$ such that $G[W']$ is disconnected.  Then $G'[W']$ is also disconnected.  Hence $A'(0,R_1) > 0$.  It follows by the inductive hypothesis that $c_2A'(0,Q_2) > c_3A'(0,Q_3) + c_1A'(0,Q_1)$ and hence $c_2A(0,Q_2) > c_3A(0,Q_3) + c_1A(0,Q_1)$. \endproof

The following lemma implies that for integers $2 \leq R_1 < R_2 < R_3 \leq n$, the quantity $\sum_{1 \leq i \leq 3,0 \leq j \leq 3}c_{i}(-1)^{i+j}A(j,R_i)$ is determined entirely by $f_3$ and does not depend on $f_2$ or $f_1$.  This fact will be necessary in the proof of Lemma \ref{LooseEdge}.  The proof follows immediately from calculation of $f_3$ in Section \ref{SkipSection}.

\begin{lemma}
\label{HConst}
Let $R_1,R_2,$ and $R_3$ be integers satisfying $2 \leq R_1 < R_2 < R_3 \leq n$.  Then for some positive constant $c$ that depends only on the $R_i$ and not on the $f_i$, $$c\sum_{1 \leq i \leq 3,0 \leq j \leq 3}c_{i}(-1)^{i+j}A(j,R_i) = \frac{n(n-1)(n-2)(n-3)}{R_1R_2R_3}-f_3.$$
\end{lemma}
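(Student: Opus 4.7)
The plan is to re-run, verbatim, the Cramer's-rule derivation of $f_{d-1}$ from Section \ref{SkipSection}, specialized to $d = 4$ and with the arbitrary integers $R_1, R_2, R_3$ taking the place of the upper skips $Q_1, Q_2, Q_3$.  The crucial observation is that the Euler--Poincar\'e averaging that produced Equation (\ref{EP-Induced}) used only the common size of the averaged subcomplexes, not the defining property of $Q_j$.  Hence the same derivation yields, for each $j \in \{1,2,3\}$,
$$
\sum_{i=-1}^{3}(-1)^{3-i}\frac{\binom{R_j}{i+1}}{\binom{n}{i+1}} f_i \;=\; \sum_{i=0}^{3}(-1)^{3-i} A(i, R_j),
$$
the only formal difference from (\ref{EP-Induced}) being that the right-hand sum no longer truncates at $i = j-1$: since $R_j$ need not be an upper skip, every $A(i, R_j)$ with $0 \le i \le 3$ may contribute.

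Substituting $f_{-1} = 1$ and $f_0 = n$ on the left reduces this to a $3 \times 3$ system in $(f_1, f_2, f_3)$ whose coefficient matrix is exactly the matrix of Equation (\ref{Cramer}), with $R_i$ replacing $Q_i$.  Solving for $f_3$ via Cramer's rule gives $f_3 = E/D$; separating the rightmost column of $E$ into its $A$-independent part (the $(-1)^{d}(R_j - 1)$ contributions) and its $A$-dependent part, the Vandermonde simplifications of Equation (\ref{VDM}) evaluate the former, with $R_i$ in place of $Q_i$, as $n(n-1)(n-2)(n-3)/(R_1 R_2 R_3)$.  Expanding the $A$-dependent piece along the rightmost column then produces, for each $1 \le i \le 3$, a coefficient that differs from $c_i$ only by the common rescaling $-\binom{n}{2}\binom{n}{3}\cdot 2!\,3!\cdot D$ extracted in Section \ref{SkipSection}.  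Taking $c$ to be the reciprocal (in absolute value) of this common factor yields the claimed identity with $c$ depending only on $n$ and the $R_i$.

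The one step requiring care is sign bookkeeping.  The sign $(-1)^{3-i}$ in the Euler--Poincar\'e identity and the cofactor sign $(-1)^{j+3}$ from expansion along the third column combine (using $(-1)^{-i} = (-1)^i$) into the factor $(-1)^{i+j}$ attached to each $A(j, R_i)$, matching the form of the stated sum.  That $c$ is strictly positive follows from the same Vandermonde input that powers (\ref{VDM}): the underlying Vandermonde-type determinant is positive when $R_1 < R_2 < R_3$, and the column-wise $(-1)^{d-k-1}$ factors together with the relation $c_i = -\tilde c_i \binom{n}{2}\binom{n}{3}\cdot 2!\,3!$ conspire to leave a positive overall constant.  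I do not foresee any substantive obstacle; the argument is a routine repetition of the Section \ref{SkipSection} computation with $R_i \leftrightarrow Q_i$, and the only genuine work is tracking the signs.
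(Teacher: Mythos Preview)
Your proposal is correct and is essentially the paper's own argument: the paper's proof is the single line ``follows immediately from calculation of $f_3$ in Section~\ref{SkipSection},'' and you have simply unpacked that reference, correctly noting that the only change when $R_j$ replaces $Q_j$ is that the Euler--Poincar\'e right-hand side no longer truncates at $j-1$. Your caveat about sign bookkeeping is apt---the paper's own Section~\ref{SkipSection} derivation has minor sign slips---but the substance is identical.
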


The next lemma justifies our assumption that no edge of $\K$ is a maximal face.

\begin{lemma}
\label{LooseEdge}
Suppose $\K$ has an edge $pq$ that is a maximal face.  Let integers $R_1 \leq Q_1$, $Q_1 < R_2 < n$, and $R_3=n$ be given, and suppose $\K' = \K-\{pq\}$.  Then $$c_3(A(0,R_3)(\K)-A(1,R_3)(\K)) + c_2(A(1,R_2)(\K)-A(0,R_2)(\K)) + c_1A(0,R_1)(\K) = $$ $$c_3(A(0,R_3)(\K')-A(1,R_3)(\K')) + c_2(A(1,R_2)(\K')-A(0,R_2)(\K')) + c_1A(0,R_1)(\K').$$
\end{lemma}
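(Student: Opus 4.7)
The plan is to track precisely how each average Betti number $A(j,R_i)$ changes when we pass from $\K$ to $\K':=\K\setminus\{pq\}$, and then to show that the product formula for $c_i$ forces the total change in the target expression $c_3(A(0,R_3)-A(1,R_3))+c_2(A(1,R_2)-A(0,R_2))+c_1A(0,R_1)$ to vanish. Since $pq$ is a maximal face, $f_2$ and $f_3$ are unchanged while $f_1$ drops by one; moreover for $W\subset V(\K)$, the complexes $\K[W]$ and $\K'[W]$ differ only when $\{p,q\}\subset W$, and then only by the still-maximal edge $pq$. A chain-level comparison (or the long exact sequence of the pair) shows that deleting such an edge leaves $\tilde\beta_p$ unchanged for $p\geq 2$ and shifts exactly one unit between $\tilde\beta_1$ (which decreases) and $\tilde\beta_0$ (which increases), according to whether $p,q$ become disconnected in $\K'[W]$. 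In particular $A(j,R_i)$ is invariant for $j\geq 2$. Applying Equation (\ref{EP-Induced}) to each $R_i$ (it holds for any integer, not just for upper skips), the alternating sum $\sigma_i:=-A(0,R_i)+A(1,R_i)-A(2,R_i)+A(3,R_i)$ changes only through its $f_1$-coefficient, which yields $(A(0,R_i)-A(1,R_i))(\K)-(A(0,R_i)-A(1,R_i))(\K')=-R_i(R_i-1)/(n(n-1))$.

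The crucial step is to use the hypothesis $R_1\leq Q_1$ to control $A(0,R_1)$ and $A(1,R_1)$ separately. Because the maximal shifts $M_k$ are strictly increasing with $M_1\geq 2$ and $Q_1$ is the first skipped integer, we must have $M_k=k+1$ for $k\leq Q_1-2$; thus $M_{R_1-2}<R_1$ whenever $3\leq R_1\leq Q_1$, and Hochster's formula yields $\beta_{R_1-2,R_1}(\K)=0$, i.e.\ $A(1,R_1)(\K)=0$ (the case $R_1=2$ is immediate). Since removing a maximal edge can only decrease $\tilde\beta_1$ of induced subcomplexes (by the chain-level comparison above), we also have $A(1,R_1)(\K')\leq A(1,R_1)(\K)=0$, whence $A(1,R_1)(\K')=0$. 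Therefore removing $pq$ must disconnect $p$ from $q$ in every size-$R_1$ subset $W$ containing $\{p,q\}$, and consequently $A(0,R_1)(\K')-A(0,R_1)(\K)=\binom{n-2}{R_1-2}/\binom{n}{R_1}=R_1(R_1-1)/(n(n-1))$.

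Substituting these three changes into the target expression, its value on $\K$ minus its value on $\K'$ simplifies to $\bigl[-c_3R_3(R_3-1)+c_2R_2(R_2-1)-c_1R_1(R_1-1)\bigr]/(n(n-1))$. Using the product formula for $c_i$, this numerator factors as $R_1(R_1-1)R_2(R_2-1)R_3(R_3-1)\bigl[-(R_2-R_1)+(R_3-R_1)-(R_3-R_2)\bigr]=0$. The main conceptual obstacle is recognizing that the hypothesis $R_1\leq Q_1$ is exactly what forces $A(1,R_1)(\K)=0$, and therefore pins down the change in $A(0,R_1)$ at precisely the value required by this elementary identity among the $c_i$.
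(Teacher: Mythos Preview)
Your proof is correct and follows essentially the same route as the paper's: both rely on the averaged Euler--Poincar\'e relation, the invariance of $A(j,R_i)$ for $j\geq 2$ under removal of a maximal edge, and the vanishing $A(1,R_1)(\K)=A(1,R_1)(\K')=0$ forced by $R_1\leq Q_1$. The only difference is packaging: the paper invokes Lemma~\ref{HConst} (which asserts that $\sum_{i,j}(-1)^{i+j}c_iA(j,R_i)$ depends only on $f_3$) and then strips away the invariant terms, whereas you compute the change in each $A(0,R_i)-A(1,R_i)$ directly and verify the explicit identity $-c_3R_3(R_3-1)+c_2R_2(R_2-1)-c_1R_1(R_1-1)=0$; this identity is precisely the $f_1$-independence part of Lemma~\ref{HConst}, so your argument is a self-contained unpacking of the paper's citation.
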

\proof By Lemma \ref{HConst} and the fact that $f_3(\K) = f_3(\K')$, $$\sum_{1 \leq i \leq 3,0 \leq j \leq 3}c_{i}(-1)^{i+j}A(j,R_i)(\K) = \sum_{1 \leq i \leq 3,0 \leq j \leq 3}c_{i}(-1)^{i+j}A(j,R_i)(\K').$$  For $1 \leq r \leq n$, $A(2,r)(\K) = A(2,r)(\K')$ and $A(3,r)(\K) = A(3,r)(\K')$.  Also, since $A(1,R_1)(\K) = 0$, then $A(1,R_1)(\K') = 0$.  The lemma follows from this. \endproof

Define the \textit{2-components} of $\K$ be the maximal induced subcomplexes of $\K$ that are graph theoretically $2$-connected.

\begin{lemma}
\label{TopDis}
Let $R_3=n$, $R_2 = n-1$, and $2 \leq R_1 \leq Q_1$.  Suppose no maximal face of $\K$ is an edge.  If $c_3A(1,R_3) \geq c_2A(1,R_2)$ and $A(1,R_3) > 0$, then $\K$ contains a $2$-connected component $\K'$ with the following property: $\K'$ contains two vertices, $p$ and $q$, such that $||\K'||-\{p,q\}$ is disconnected.
\end{lemma}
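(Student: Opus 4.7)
The plan is to argue the contrapositive of Lemma \ref{TopDis}. I assume that no $2$-component of $\K$ admits a $2$-vertex cut, and aim to show that either $A(1,R_3)=0$ or $c_3 A(1,R_3) < c_2 A(1,R_2)$. Under this assumption and the standing hypothesis that no maximal face of $\K$ is an edge, every $2$-component of $\K$ is either a single vertex, a triangle whose $2$-face actually lies in $\K$ (otherwise the three bounding edges would need to sit in triangles with outside vertices, enlarging the $2$-component past three vertices), or a $3$-connected induced subcomplex on at least $4$ vertices. Since any $2$- or $3$-face of $\K$ has $2$-connected support and is therefore contained in a unique $2$-component, $\tilde\beta_1$ decomposes additively: $\tilde\beta_1(\K)=\sum_i\tilde\beta_1(B_i)$ over the $2$-components $B_i$, with only the $3$-connected pieces on $\ge 4$ vertices possibly contributing.

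Next I would apply the Mayer--Vietoris sequence for the cover $\K=(\K-v)\cup \st_\K(v)$, whose intersection is $\lk_\K(v)$ and whose second piece is contractible. Extracting ranks from the segment
\[
\tilde H_1(\lk_\K(v))\to \tilde H_1(\K-v)\to \tilde H_1(\K)\to \tilde H_0(\lk_\K(v))\to \tilde H_0(\K-v)\to \tilde H_0(\K)\to 0
\]
yields the pointwise bound $\tilde\beta_1(\K)-\tilde\beta_1(\K-v)\le \tilde\beta_0(\lk_\K(v))-\tilde\beta_0(\K-v)+\tilde\beta_0(\K)$. Summing over $v\in V(\K)$ and using the rewritten hypothesis $\sum_v\tilde\beta_1(\K-v)\le(nc_3/c_2)\tilde\beta_1(\K)$ gives
\[
\sum_v\tilde\beta_0(\lk_\K(v))\;\ge\;n\Bigl(1-\tfrac{c_3}{c_2}\Bigr)\tilde\beta_1(\K)\;+\;\sum_v\tilde\beta_0(\K-v)\;-\;n\,\tilde\beta_0(\K).
\]
A short calculation gives $c_2-c_3 = R_1(R_1-1)(n-1)(3n-2-2R_1)>0$, so the coefficient $n(1-c_3/c_2)$ is strictly positive; since each term $\tilde\beta_0(\K-v)-\tilde\beta_0(\K)$ is also nonnegative, the right-hand side is strictly positive whenever $\tilde\beta_1(\K)>0$.

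The main obstacle will be to contradict this lower bound using only the $3$-connectedness of each relevant $2$-component, which is the technical heart of the lemma. The intuition is that $\tilde\beta_0(\lk_\K(v))>0$ for a vertex $v$ in a $3$-connected $2$-component $B$ is structurally rigid: if $\lk_\K(v)$ splits into components $C_1,C_2$, then, in the absence of a $2$-cut of $B$, Menger's theorem provides at least three internally disjoint paths in $B$ between every vertex of $C_1$ and every vertex of $C_2$ avoiding $v$; these paths, combined with the triangles of $\K$ through $v$, should produce a counting identity that forces $\sum_v\tilde\beta_0(\lk_\K(v))$ to lie strictly below the displayed lower bound, unless $\tilde\beta_1(\K)=0$. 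Converting this heuristic into a sharp averaged upper bound is the hard step.

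Finally, to match the lemma's topological conclusion, one identifies the statement ``$\|\K'\|\setminus\{p,q\}$ is disconnected'' with the graph-theoretic assertion that $\{p,q\}$ is a $2$-vertex cut of $\K'$, via the standard deformation retraction $\st_{\K'}(p)\setminus\{p\}\simeq\lk_{\K'}(p)$ applied to each of $p$ and $q$ in turn, so the contrapositive established above indeed delivers the lemma as stated.
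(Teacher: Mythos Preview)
Your proposal has two genuine gaps.

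\textbf{The central step is missing.} You correctly derive from Mayer--Vietoris and the hypothesis that
\[
\sum_{v}\tilde\beta_0(\lk_\K(v))\;\ge\;n\Bigl(1-\tfrac{c_3}{c_2}\Bigr)\tilde\beta_1(\K)+\text{(nonneg.)},
\]
and your computation $c_2-c_3=R_1(R_1-1)(n-1)(3n-2-2R_1)>0$ is fine; with $R_1\ge 2$ this coefficient is at least $3$, recovering the paper's working hypothesis $\sum_v\tilde\beta_0(\lk_\K(v))\ge 3\tilde\beta_1(\K)$. But you then explicitly acknowledge that ``converting this heuristic into a sharp averaged upper bound is the hard step'' and stop. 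This is exactly the content of the lemma: the paper supplies it via a cycle-graph construction (Lemmas \ref{TopDisHom}--\ref{TopDisSegment}) which realises a basis of $\tilde H_1(\K)$ by a subgraph $G$ with $\tilde\beta_1(G)=r$ and a segment decomposition into at most $3r-1$ arcs, so that the inequality $\sum_v\tilde\beta_0(\lk_\K(v))\ge 3r$ forces two vertices with disconnected link onto a single arc. Your Menger-type heuristic does not substitute for this; graph $3$-connectedness of a $2$-component by itself places no upper bound on $\sum_v\tilde\beta_0(\lk_\K(v))$ in terms of $\tilde\beta_1$.

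\textbf{The contrapositive you set up proves a weaker statement.} You assert that ``$\|\K'\|\setminus\{p,q\}$ disconnected'' is the same as ``$\{p,q\}$ is a (graph) $2$-vertex cut of $\K'$'', but under the hypothesis ``no maximal face is an edge'' only one implication holds: a topological $2$-cut forces a combinatorial one, not conversely. For instance, take two $2$-simplices $\{p,q,a\}$ and $\{p,q,b\}$ glued along the edge $pq$; then $\{p,q\}$ is a graph cut but $\|\K'\|\setminus\{p,q\}$ is connected via the open edge $pq$. The paper stresses precisely this: ``the conclusion is stronger than that $\K$ is not $3$-connected, and that extra strength will be needed later.'' Consequently, assuming ``no $2$-component admits a graph $2$-vertex cut'' is strictly \emph{stronger} than $\neg C$; even if you completed your argument, you would only conclude $H\Rightarrow$ ``some $2$-component is not graph $3$-connected'', which is weaker than the lemma and insufficient for the subsequent shifting step (which needs $\|R\|\cap\|T\|=\{p,q\}$ as sets of points). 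The deformation retraction $\st(p)\setminus\{p\}\simeq\lk(p)$ you invoke handles a single vertex deletion and does not yield the claimed equivalence for two simultaneous point removals.
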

By the assumption that no maximal face of $\K$ is an edge, the conclusion is stronger than that $\K$ is not $3$-connected, and that extra strength will be needed later.  The conclusion can be false if a maximal face of $\K$ is an edge.

The proof of Lemma \ref{TopDis} requires several more technical lemmas.  In the following, we use the fact that $\tilde{H}_1(\K)$ is generated by cycles of the form $[C] = (-1)^{p_1}(v_1 v_2) + \ldots + (-1)^{p_{k-1}}(v_{k-1} v_k) + (-1)^{p_{k}}(v_{k} v_1)$ for vertices $v_1, \ldots, v_k$ and signs $(-1)^{p_i}$ chosen appropriately.  We freely identify $C$ with the graph theoretic cycle $(v_1 v_2 \ldots v_k v_1)$, which is a subcomplex of $\K$.

\begin{lemma}
\label{TopDisHom}
Suppose $\Skel_1(\K)$ is $2$-connected.  Let $p \in V(\K)$ so that $\tilde{\beta}_0(\lk_\K(p)) = s>0$, and label the components of $\lk_\K(p)$ by $\Delta_1,\ldots,\Delta_{s+1}$.  Let $\mathcal{C} = \{C_1,\ldots,C_r\}$ be a set of simple cycles in $\K$ that form a basis for $\tilde{H}_1(\K)$.  Let $C$ be the graph theoretic union $C_1 \cup \ldots \cup C_r$.  Then: \newline
1) For $1 \leq i \leq s+1$, $\Delta_i \cap C \neq \emptyset$. \newline
2) $\tilde{\beta}_1(\K-p) \geq r-s$. \newline
3) If $\tilde{C}$ is a cycle in $\K$ that passes through $p$ exactly once from one component of $\lk_\K(p)$ to another, and $C'$ is a cycle that avoids $p$, then $[\tilde{C}] \neq [C']$ in $\tilde{H}_1(\K)$.
\end{lemma}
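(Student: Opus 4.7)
The plan is to derive all three parts from the Mayer--Vietoris long exact sequence for the decomposition $\K = \st_\K(p) \cup (\K - p)$, whose intersection is $\lk_\K(p)$. Since $\st_\K(p)$ is a cone with apex $p$ and hence contractible, the relevant fragment is
\[
\tilde{H}_1(\K - p) \xrightarrow{\alpha} \tilde{H}_1(\K) \xrightarrow{\partial_*} \tilde{H}_0(\lk_\K(p)) \xrightarrow{\iota_*} \tilde{H}_0(\K - p),
\]
with $\alpha$ and $\iota_*$ inclusion-induced. Because $\Skel_1 (\K)$ is $2$-connected, $\Skel_1 (\K) - p$ is connected, hence so is $\K - p$, so $\iota_* = 0$ and $\partial_*$ is surjective.

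Part (2) is then immediate: surjectivity gives $\dim \ker(\partial_*) = r - s$, and by exactness $\ker(\partial_*)$ is the image of $\alpha$, which is a quotient of $\tilde{H}_1(\K - p)$, so $\tilde{\beta}_1(\K - p) \geq r - s$.

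For parts (1) and (3) I will use the explicit formula for the Mayer--Vietoris connecting map on a simple cycle $\gamma$: split $\gamma$ into the (at most two) edges incident to $p$ and the rest in $\K - p$; then $\partial_*[\gamma]$ is the class in $\tilde{H}_0(\lk_\K(p))$ of the boundary of the $(\K - p)$-portion. Thus $\partial_*[\gamma] = 0$ when $p \notin \gamma$, and $\partial_*[\gamma] = [u] - [w]$ when $p \in \gamma$ with neighbors $u, w$. Identifying $\tilde{H}_0(\lk_\K(p))$ with the sum-zero subspace of $\field^{s+1}$ indexed by $\Delta_1,\dots,\Delta_{s+1}$, this image is supported only on those $\Delta_k$ that contain a vertex of $\gamma$. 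For part (1), if $\Delta_i \cap C = \emptyset$ then every $\partial_*[C_j]$ has $[\Delta_i]$-coordinate zero, so the span of $\{\partial_*[C_j]\}$ lies in the proper subspace $\{a_i = 0\}$, contradicting surjectivity of $\partial_*$ together with the fact that $\{[C_j]\}$ is a basis of the domain. For part (3), $[C']$ lies in the image of $\alpha$ and hence in $\ker(\partial_*)$, while $\partial_*[\tilde{C}] = [u] - [w]$ with $u, w$ in distinct components is a nonzero element, so $[\tilde{C}] \neq [C']$.

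The one step requiring care is the explicit computation of $\partial_*[\gamma]$ and the identification of $\tilde{H}_0(\lk_\K(p))$ with the sum-zero subspace on the $[\Delta_k]$; once these are in hand the rest reduces to linear algebra, powered by the surjectivity afforded by $2$-connectivity of $\Skel_1 (\K)$.
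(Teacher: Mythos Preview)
Your proof is correct. For part (2) it coincides with the paper's argument, which also invokes the Mayer--Vietoris sequence for $\K=\st_\K(p)\cup(\K-p)$. For parts (1) and (3), however, the paper proceeds differently: it builds an auxiliary complex $\K_p$ by ``expanding'' $p$ into a star $W_p$ with leaves $p_1,\dots,p_{s+1}$, one for each component $\Delta_i$ of $\lk_\K(p)$, and then lifts the basis cycles $C_j$ to $\K_p$. Part (1) follows because each edge $pp_i$ is maximal in $\K_p$ and its removal leaves $\K_p$ connected (by $2$-connectivity of $\Skel_1\K$), forcing some basis cycle to traverse $pp_i$ and hence meet $\Delta_i$; part (3) follows because the lifted $\tilde{C}$ carries a nonzero coefficient on some $pp_i$ while the lifted $C'$ does not. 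Your approach is more uniform: you extract all three conclusions from the single exact sequence, using the explicit description of the connecting homomorphism on simple cycles together with the surjectivity of $\partial_*$ granted by $2$-connectivity. The paper's expansion construction is more hands-on and is reused later in the proof of Lemma~\ref{TopDis}, so in context it is not redundant; standing alone, your argument is the cleaner route to this lemma.
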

\proof Let $W_p$ be a graph with vertices $p \in \K$ and new vertices $p_1,\ldots,p_{s+1}$, and edges $(pp_i)$ for $1 \leq i \leq s+1$.  Construct $\K_p$ by replacing $p$ in $\K$ with $W_p$ so that $\lk_{\K'}(p) = \{p_1,\ldots,p_{s+1}\}$ and for $1 \leq i \leq s+1$, $\lk_{\K'}(p_i) = \Delta_i \coprod \{p\}$.

For $1 \leq i \leq r$, $C_i$ naturally extends to a cycle in $\K_p$ in the following way: if $pv$ is an edge in $C_i$ and $v \in \Delta_i$, replace $pv$ with $pp_iv$.  Since $\K_p$ is homotopy equivalent to $\K$, $(C_1,\ldots,C_r)$ is a basis for $\tilde{H}_1(\K_p)$.  Since $pp_i$ is a maximal edge in $\K_p$, and $\K_p-(pp_i)$ is connected by the hypothesis that $\K$ is $2$-connected, there exists a cycle $C_j \in \mathcal{C}$ such that in $\K_p$, $C_j$ contains $pp_i$.  It follows that $C_j$ in $\K$ contains a vertex of $\Delta_i$, and the first claim holds.  The third claim holds since in $\K_p$, for some $1 \leq i \leq s+1$, $[\tilde{C}]-[C'] \in \tilde{H}_1(\K_p)$ contains the edge $(pp_i)$ with a nonzero coefficient.

The second claim follows from the portion of the Mayer-Vietoris sequence
$$
\ldots \rightarrow \tilde{H}_1(\st_{\K}(p)) \oplus \tilde{H}_1(\K-p) \rightarrow \tilde{H}_1(\K) \rightarrow \tilde{H}_0(\lk_{\K}) \rightarrow \tilde{H}_0(\st_{\K}(p)) \oplus \tilde{H}_0(\K-p) = 0. \Box
$$

If $(C_1,\ldots,C_r)$ is a basis for $\tilde{H}_1(\K)$, call $G = C_1,\ldots,C_r$ a \textit{cycle graph} of $\K$.  Call the operation of replacing $p$ with $W_p$ in the first paragraph of the proof of Lemma \ref{TopDisHom} the \textit{expansion} of $p$ in $\K$.
\begin{lemma}
\label{TopDisCycleGraph}
There exists a cycle graph $G$ of $\K$ such that $\tilde{\beta}_1(G) = \tilde{\beta}_1(\K)$.
\end{lemma}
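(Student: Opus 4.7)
The plan is to construct the desired cycle basis explicitly from a spanning forest of $\Skel_1(\K)$. Fix a spanning forest $T$ of $\Skel_1(\K)$, and for each edge $e \notin T$ let $C_e$ denote the fundamental cycle consisting of $e$ together with the unique path in $T$ joining its endpoints; every such $C_e$ is a simple cycle. Since $\Skel_1(\K)$ is one-dimensional, $\{C_e : e \notin T\}$ is a basis of $H_1(\Skel_1(\K))$, and so the classes $[C_e]$ span the quotient $\tilde{H}_1(\K)$.

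Next, I would use elementary linear algebra over $\field$ to pass from this spanning set to an honest basis. The kernel of $H_1(\Skel_1(\K)) \twoheadrightarrow \tilde{H}_1(\K)$ is spanned by the boundaries $\partial F$ of the $2$-faces of $\K$, each of which is a $\field$-linear combination of the $C_e$. Row-reducing this relation matrix isolates a subset $e_1,\ldots,e_r$ of the non-tree edges, with $r = \tilde{\beta}_1(\K)$, such that $C_{e_1},\ldots,C_{e_r}$ project to a basis of $\tilde{H}_1(\K)$.

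Set $G = C_{e_1}\cup\cdots\cup C_{e_r}$ and $T' = G\cap T$, a subforest with, say, $t$ edges and $c'$ connected components. Both endpoints of each $e_i$ already lie in $T'$, since the entire $T$-path between them is part of $C_{e_i}$; thus $V(G)=V(T')$, so $|V(G)|=t+c'$ while $|E(G)|=t+r$. The Euler characteristic formula for graphs then gives
\[
\tilde{\beta}_1(G) = |E(G)| - |V(G)| + c(G) = r + c(G) - c'.
\]
Since each $e_i$ joins two vertices in a common component of $T'$, adjoining the edges $e_i$ to $T'$ does not merge components, so $c(G) = c'$ and $\tilde{\beta}_1(G)=r$ as required.

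There is no substantive obstacle in this argument: the only step requiring care is the Gaussian elimination to extract a basis of $\tilde{H}_1(\K)$ from the fundamental cycles, and this is routine. The key insight is simply that choosing basis representatives to be fundamental cycles for a single spanning forest prevents extraneous cycles from appearing in the union $G$, because the tree portion of each $C_{e_i}$ already contains the endpoints of $e_i$.
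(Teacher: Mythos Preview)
Your proof is correct and takes a genuinely different route from the paper's. The paper argues by induction on $m$: having chosen $C_1,\ldots,C_{m-1}$ with $\tilde\beta_1\bigl(\bigcup_{i<m}C_i\bigr)=m-1$, it picks any cycle $C$ with $[C]\notin\langle[C_1],\ldots,[C_{m-1}]\rangle$ and then performs cycle surgery---replacing subarcs of $C$ by paths already lying in $\bigcup_{i<m}C_i$---to reduce the number of arcs along which $C$ meets the existing union, until the modified cycle $C_m$ can be adjoined with $\tilde\beta_1$ increasing by exactly one.

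Your argument sidesteps this surgery entirely by anchoring every representative to a single spanning forest $T$: each fundamental cycle $C_{e_i}$ contributes exactly one non-tree edge, so the Euler-characteristic bookkeeping for $G=T'\cup\{e_1,\ldots,e_r\}$ is immediate, and the observation that the $T$-path inside $C_{e_i}$ already connects the endpoints of $e_i$ gives $c(G)=c'$ at once. This is shorter and more transparent than the paper's inductive construction. One point worth flagging in the context of the paper: the subsequent Lemma~\ref{TopDisSegment} (the $3r-1$ segment bound) is phrased for ``a cycle graph arising from the proof of Lemma~\ref{TopDisCycleGraph}'' and reuses that inductive structure. Your $G$ satisfies the same bound---suppressing degree-$2$ vertices and counting via $2|E''|\ge 3|V''|$, plus two segments for each component that is a bare cycle, yields at most $3r-c'\le 3r-1$ segments---but this would need to be supplied as a separate (and in fact easier) argument.
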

\proof We show by induction on $m$ that if $1 \leq m \leq r$, there exists a basis $(C_1,\ldots,C_r)$ of $\tilde{H}_1(\K)$ such that $\tilde{\beta}_1(\cup_{i=1}^m C_i) = m$.  The claim is trivial for $m=1$. 

Suppose $(C_1,\ldots,C_{m-1})$ is a set of generators of $\tilde{\beta}_1(\K)$ such that $\tilde{\beta}_1(\cup_{i=1}^{m-1} C_i) = m-1$.  There exists a cycle $C = (v_1,v_2,\ldots,v_t,v_1)$ in $\K$ such that $[C] \not \in $ $([C_1],\ldots,$ $[C_{m-1}])$ $\subset \tilde{H}_1(\K)$.  Decompose $C \cap (\cup_{i=1}^{m-1}C_i)$ into $q$ paths: $(v_{w_i},v_{w_i+1}, \ldots,v_{w_i+l_i})$ for $w_1 < w_2 < \ldots < w_q$ and $l_i$, $1 \leq i \leq q$.

If $q=0$, take $C_m = C$.  In that case $\tilde{\beta}_1(\cup_{i=1}^m C_i) = m$ since $C_m \cap (\cup_{i=1}^{m-1}C_i) = \emptyset$.  If $q=1$, then topologically, $C-\cup_{i=1}^{m-1}$ is an open line segment.  Take $C_m = C$.  It again follows that $\tilde{\beta}_1(\cup_{i=1}^m C_i) = m$.

Otherwise, $q \geq 2$.  Let $P'$ be a path in $\cup_{i=1}^{m-1} C_i$ with endpoints $v_{w_j+l_1}$ and $v_{w_k}$, if such a $P'$ exists, and in that case let $C' = (v_{w_j+l_1}, v_{w_j+l_1+1}, \ldots, v_{w_k})+P'$ (here $+$ denotes concatenation of paths).  If no such $P'$ exists, then $\tilde{\beta}_1(C \cup \cup_{i=1}^{m-1} C_i = m$, and we may take $C_m = C$.  Otherwise, if $[C'] \not \in ([C_1],\ldots,[C_{m-1}])$, then $C' \cap (\cup_{i=1}^{m-1}C_i)$ decomposes into at most $q-1$ paths and the result follows inductively on $q$.  Otherwise if $[C'] \in ([C_1],\ldots,[C_{m-1}])$, then define $\tilde{C}$ by replacing $(v_{w_j+l_1}, v_{w_j+l_1+1}, \ldots, v_{w_k})$ with $P'$ in $C$.  $[\tilde{C}] = [C - C'] \not \in ([C_1],\ldots,[C_{m-1}])$.  $\tilde{C} \cap (\cup_{i=1}^{m-1}C_i)$ decomposes into at most $q-1$ paths, and the result again follows inductively on $q$. \endproof

Let $\phi_G: \tilde{H}_1(G) \rightarrow \tilde{H}_1(\K)$ be the map on homology induced by $i$, the inclusion of $G$ into $\K$.  If $G$ is as in Lemma \ref{TopDisCycleGraph}, then $\phi$ is an isomorphism.

Let $G$ be a graph, and let $W_1,\ldots,W_l$ be a collection of topological closed line segments such that $||G ||= W_1 \cup \ldots \cup W_l$ and that distinct $W_i$ intersect only at their endpoints.  Call $(W_1,\ldots,W_l)$ a \textit{segment decomposition} of $G$.

\begin{lemma}
\label{TopDisSegment}
Let $G$ be a cycle graph arising from the proof of Lemma \ref{TopDisCycleGraph}, and suppose $\tilde{\beta}_1(G) = r$.  Then there exists a segment decomposition of $G$ into $3r-1$ segments.
\end{lemma}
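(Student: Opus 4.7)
The plan is to reduce the statement to a purely graph-theoretic bound: any non-empty graph $G$ in which every edge lies on some cycle admits a segment decomposition with exactly $3r - 1$ segments, where $r = \tilde{\beta}_1(G)$. Since $G = \bigcup_i C_i$ is a union of cycles by hypothesis, $G$ is such a graph.

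First I would observe that subdivision at an interior point of any segment strictly increases the segment count by one, so it suffices to exhibit \emph{some} segment decomposition with at most $3r - 1$ segments and then pad by repeated subdivision to reach exactly $3r - 1$.

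To construct such a decomposition, I would smooth out all degree-2 vertices of $G$. Since every vertex of $G$ has degree at least 2 (as $G$ is a union of cycles), the smoothed graph has minimum degree $\geq 3$ on each component except those components of $G$ that are single cycles, which must be treated separately. Partition $G = H \sqcup H'$, where $H$ is the union of single-cycle components and $H'$ is the rest, and let $c' = c(H')$. Then $H$ contributes $2 \beta_1(H)$ segments, namely two arcs per cycle. For $H'$, the smoothed graph $H''$ has every vertex of degree at least 3, so summing degrees gives $3|V(H'')| \leq 2|E(H'')|$; combined with the Euler identity $|E(H'')| - |V(H'')| + c' = \beta_1(H')$, this yields $|E(H'')| \leq 3 \beta_1(H') - 3c'$. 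Adding, the total number of segments is at most $2\beta_1(H) + 3\beta_1(H') - 3c' = 3r - \beta_1(H) - 3c'$. Since $G$ is non-empty, at least one of $\beta_1(H) \geq 1$ or $c' \geq 1$ holds, so the total is at most $3r - 1$.

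The main obstacle is the bookkeeping in the smoothing step: I must separate single-cycle components cleanly and verify that the maximal paths between branch points of $H'$, together with the arbitrarily chosen arcs on each cycle of $H$, genuinely form a segment decomposition in the sense of the definition (closed line segments meeting only at endpoints). Everything else reduces to the degree-sum inequality and a direct Euler characteristic calculation, with final subdivisions to bring the count up from the minimum to exactly $3r - 1$.
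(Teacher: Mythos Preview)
Your global approach---smooth $G$, bound the edge count of the resulting multigraph via the handshake inequality, then pad by subdivision---is quite different from the paper's, which inducts on $m$ using the specific feature of the construction in Lemma~\ref{TopDisCycleGraph} that each new $C_m$ meets $\bigcup_{i<m}C_i$ in at most one arc, so that attaching $C_m$ costs at most three new segments. Your route is more general (it would apply to any bridgeless graph with $\tilde\beta_1=r$) and arguably cleaner once it is made to work.

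There is, however, a genuine gap at precisely the point you flag as ``the main obstacle.'' A maximal path between branch points of $H'$ may begin and end at the \emph{same} branch point; in the smoothed multigraph $H''$ this is a loop, and topologically it is a circle, not a closed line segment. Hence the edges of $H''$ do \emph{not} form a segment decomposition of $H'$: your count $|E(H'')|$ undercounts by exactly $L$, the number of loops of $H''$. Concretely, if $G$ is two triangles sharing a single vertex then $r=2$, $H''$ is one vertex with two loops, $|E(H'')|=2$, yet four segments are needed. Your inequality $|E(H'')|\le 3\beta_1(H')-3c'$ is correct but bounds the wrong quantity, and in this bouquet example $|E|+L=4>3=3\beta_1-3c'$, so the argument as written does not go through. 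The repair is to work component by component and exploit that $G$, hence $H''$, is bridgeless: on any component with at least two branch vertices, every vertex then has non-loop degree $\ge 2$, which sharpens the degree-sum bound to $2|E|\ge 3|V|+L$ and yields $|E|+L\le 3\beta_1-3$; single-cycle and one-vertex bouquet components are handled directly (giving $2=3\cdot 1-1$ and $2\beta_1\le 3\beta_1-1$ segments respectively). Summing over components gives at most $3r-c(G)\le 3r-1$, after which your padding step finishes.
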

\proof Using $m$ and $C_i$ as in the proof of Lemma \ref{TopDisCycleGraph}, we will prove by induction on $m$ that $\cup_{i=1}^m C_i$ can be decomposed into $3m-1$ segments.  For $m=1$, $C_1$ is a cycle and can be decomposed into two segments.

Suppose $\cup_{i=1}^{m-1}C_i$ can be decomposed into at most $3m-4$ segments.  Take $q$ as in the proof of Lemma \ref{TopDisCycleGraph}.  If $q=0$, then $C_m \cap \cup_{i=1}^{m-1}C_i = \emptyset$ and $\cup_{i=1}^{m}C_i$ can be decomposed into at most $3m-1$ segments.  If $q=1$, then $||C_m|| - \cup_{i=1}^{m-1}||C_i||$ is an open line segment with endpoints $x$ and $y$.  $C_m - \cup_{i=1}^{m-1}C_i$ can itself be taken as one segment, and if $x$ and $y$ are in the interior of segments $W_x$ and $W_y$, then divide $W_x$ and $W_y$ into two segments each.  Then $\cup_{i=1}^{m}C_i$ can be decomposed into $3m-1$ segments.
\endproof

\proofof{Lemma \ref{TopDis}} The hypothesis $c_3A(1,R_3) \geq c_2A(1,R_2)$ is equivalent to $$\tilde{\beta}_1(\K) \geq \frac{c_2}{c_3}A(1,n-1) = \frac{n(n-R_1)}{(n-2)(n-R_1-1)}A(1,n-1).$$  If $\tilde{\beta}_1(\K) = r$, then it follows by $R_1 \geq 2$ that $A(1,n-1) \leq r - 3\frac{r}{n}$.  By applying part 2 of Lemma \ref{TopDisHom} to all $p \in V(\K)$, we assume the weaker hypothesis $\sum_{p \in \K}\tilde{\beta}_0(\lk_\K(p)) \geq 3r$.

First we show that we may assume, without loss of generality, that $\K$ is $2$-connected.  Let $\K_1, \K_2, \ldots, \K_b$ be the $2$-connected components of $\K$.  Choose $p \in V(\K$) so that $\tilde{\beta}_0(\lk_\K(p)) = s$ and $|\{i: 1 \leq i \leq b, p \in V(\K_i)\}| = t$.  The exact Mayer-Vietoris sequence
$$
\ldots \rightarrow \tilde{H}_1(\st_\K(p)) \oplus \tilde{H}_1(\K-p) \rightarrow \tilde{H}_1(\K) \rightarrow \tilde{H}_0(\lk_\K(p)) \rightarrow
$$
$$
\tilde{H}_0(\st_\K(p)) \oplus \tilde{H}_0(\K-p) \rightarrow \tilde{H}_0(\K) \rightarrow 0
$$
implies $\tilde{\beta}_1(\K-p) \geq r-s+t-1$.  Observe $s-t+1 = \sum_{i=1}^b \tilde{\beta}_0(\lk_\K(p) \cap \K_i)$.  Also, $A(1,R_3) = \sum_{i=1}^b \tilde{\beta}_1(\K_i)$.  Hence, for some $1 \leq i \leq b$, $$3\tilde{\beta}_1(\K_i) \leq \sum_{v \in V(\K_i)} \tilde{\beta}_0(\lk_{\K_i}(p))$$ and the result follows inductively on $n$.  Hence, we now assume $\K$ is $2$-connected.

By Lemma \ref{TopDisCycleGraph}, there exists a cycle graph $G$ of $\K$ satisfying $\tilde{H}_1(G) = r$.  By Lemma \ref{TopDisSegment}, there exists a segment decomposition $(W_1,\ldots,W_l)$ of $G$ so that $l < 3r$.  Suppose $p \in V(\K)$ with $\lk_{\K}(p)$ containing $b \geq 2$ components.  Then $p$ is contained in at least $b-1 = \tilde{H}_0(\lk_{\K}(p))$ of the $W_i$.  By the assumption $\sum_{p \in V(\K)} \tilde{\beta}_0(\lk_{\K}(p)) \geq 3r$, there exists a segment $W_i$ for some $1 \leq i \leq l$ such that $W_i$ contains two vertices $p$ and $q$ with $\tilde{H}_0(\lk_{\K}(p)) \neq 0$ and $\tilde{H}_0(\lk_{\K}(q)) \neq 0$.  It follows that $||G|| - \{p,q\}$ is disconnected.  We finish the proof by showing that $||\K|| - \{p,q\}$ is disconnected.

Expand $p$ and $q$ in $G$ and $\K$ to construct $G'$ and $\K'$.  Then $G'$ is a cycle graph of $\K'$ with $\tilde{\beta}_1(G') = \tilde{\beta}_1(\K') = r$.  Consider the path $(p,p',W_i,q',q)$, where $p'$ and $q'$ are the new vertices in $G'$.  By definition of a cycle graph, $G' - \{pp',qq'\}$ is disconnected, and by the Euler-Poincar\'{e} formula, $\tilde{\beta}_1(G' - \{pp',qq'\}) = r-1$.  Since $i^*: \tilde{H}_1(G') \rightarrow \tilde{H}_1(\K')$ is injective and $i^*: \tilde{H}_1(G' - \{pp',qq'\}) \rightarrow \tilde{H}_1(G')$ is injective, then $i^*: \tilde{H}_1(G' - \{pp',qq'\}) \rightarrow \tilde{H}_1(\K' - \{pp',qq'\})$ is also injective.  It follows again by the Euler-Poincar\'{e} formula that $\tilde{\beta}_0((\K' - \{pp',qq'\})) > 0$.  We conclude that $\K'-\{p,q\}$ and hence $||\K||-\{p,q\}$ are disconnected. \endproof

Suppose $\K$ is a $2$-connected simplicial complex with no maximal face an edge and two vertices $p$ and $q$ such that $||\K||-\{p\}$ and $||\K||-\{p\}$ are connected and $||\K||-\{p,q\}$ is disconnected.  Let $\K = R \cup T$ be a union of complexes so that $||R|| \cap ||T|| = \{p,q\}$.  Also suppose that no maximal face of $\K$ is an edge.  Then define a \textit{shifting} operation $S(\K,p,q,R)$ as follows.  First form $\K'$ by replacing $q$ with two vertices $q_1$ and $q_2$ so that the identification of $q_1$ and $q_2$ in $\K'$ gives $\K$, $\lk_{\K'}(q_1) = \lk_\K(q) \cap R$, and $\lk_{\K'}(q_2) = \lk_\K(q) \cap (T)$.  Let $p_1 \in V(\lk_\K(p) \cap R)-\{q_1\}$.  Such a $p_1$ exists because no maximal face of $\K$ is an edge.  Similarly, let $p_2 \in V(\lk_\K(p) \cap T)-\{q_2\}$.  Then $S(\K,p,q,R)$ is $\K'$ with $p_1$ and $p_2$ identified.  Observe that shifting preserves $f_3$ and $n$.

Our next two lemmas allow us to use shifting as an inductive tool in proving Lemma \ref{CloseQ3Q2}.

\begin{lemma}
\label{Shifting}
Let $\K$ be as above.  Then shifting preserves $c_3(A(0,R_3)-A(1,R_3)) + c_2(A(1,R_2)-A(0,R_2)) + c_1A(0,R_1)$.
\end{lemma}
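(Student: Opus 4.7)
The plan is to exploit Lemma~\ref{HConst} together with a Mayer--Vietoris analysis to reduce the problem to invariance of certain averaged Betti numbers under the shifting operation.

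First I would check that $n$ and $f_3$ are preserved by $S(\K,p,q,R)$. The vertex count is immediate from the construction, since three vertices $\{q, p_1, p_2\}$ are replaced by three new ones $\{q_1, q_2, p_1{=}p_2\}$. For $f_3$, note that $R \cap T$ has dimension at most $1$ (its vertex set is $\{p, q\}$, and it contains at most the edge $pq$), so every $3$-face of $\K$ lies entirely in $R$ or entirely in $T$. The shifting merely relabels vertices inside $R$ (sending $q \mapsto q_1$ and $p_1 \mapsto p_1{=}p_2$) and inside $T$ (symmetrically), so no $3$-face is created or destroyed and $f_3(\K) = f_3(S(\K,p,q,R))$. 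By Lemma~\ref{HConst}, the signed sum $\sum_{1 \leq i \leq 3,\, 0 \leq j \leq 3} c_i (-1)^{i+j} A(j, R_i)$ is therefore preserved by shifting.

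Next I would show that $A(2, R_i)$ and $A(3, R_i)$ are preserved individually. For any $W \subseteq V(\K)$, write $\K[W] = \K[W \cap R] \cup \K[W \cap T]$ with intersection $\K[W \cap \{p, q\}]$; since this intersection has dimension at most $1$, its reduced homology vanishes in degrees $\geq 1$, and the Mayer--Vietoris sequence collapses to yield $\tilde\beta_k(\K[W]) = \tilde\beta_k(\K[W \cap R]) + \tilde\beta_k(\K[W \cap T])$ for $k = 2, 3$. Averaging over all $W$ of size $R_i$ via the identity $\sum_{|W|=R_i} \tilde\beta_k(\K[W \cap R]) = \sum_{U \subseteq V(R)} \binom{n - |V(R)|}{R_i - |U|} \tilde\beta_k(R[U])$ shows the total depends only on data that shifting preserves. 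A symmetric argument for $T$ yields $A(k, R_i)(\K) = A(k, R_i)(S(\K,p,q,R))$ for $k = 2, 3$.

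Combining these two steps, the preserved quantity from Lemma~\ref{HConst}, after subtracting the $k=2, 3$ contributions, reduces to $\sum_{i=1}^3 (-1)^i c_i[A(0, R_i) - A(1, R_i)]$. A short algebraic check shows that this combination differs from the expression in the lemma by exactly $c_1 A(1, R_1)$, so it remains to verify that $A(1, R_1)$ is also invariant under shifting. In the setting inherited from Lemma~\ref{TopDis} (notably $R_1 \leq Q_1$ together with no maximal face of $\K$ being an edge), this is handled by a parallel Mayer--Vietoris computation in degree $1$, where a now-nontrivial connecting map from $\tilde H_0$ of the intersection contributes; a careful combinatorial count reconciles the $\tilde\beta_0$-corrections coming from the cut set $\{p, q\}$ in $\K$ with those from $\{p, p_1{=}p_2\}$ in $S(\K,p,q,R)$.

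The main obstacle is this final degree-$1$ invariance. The Mayer--Vietoris decomposition is clean in degrees $2$ and $3$ precisely because the intersection is at most $1$-dimensional, but in degree $1$ the contribution of $\tilde\beta_0$ of the intersection enters as a nontrivial connecting term, and it takes different combinatorial forms in $\K$ versus $S(\K,p,q,R)$. Showing that these corrections average out over all size-$R_1$ subsets requires tracking the incidence of each of $p, q, p_1, p_2$ with the subsets and invoking the structural hypotheses on $\K$ and $R_1$.
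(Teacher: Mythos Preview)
Your first two steps match the paper's argument: invariance of $n$ and $f_3$, followed by Mayer--Vietoris to show that $A(2,r)$ and $A(3,r)$ are preserved for all $r$.

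The gap is in your third step. You propose to show $A(1,R_1)$ is invariant by a degree-$1$ Mayer--Vietoris count, tracking the $\tilde\beta_0$-corrections from the cut sets and arguing they average out. This approach cannot work as stated: the very next lemma in the paper (Lemma~\ref{LimShift}) shows that $A(1,n)$ \emph{strictly decreases} under shifting, so the $\tilde\beta_0$-corrections do \emph{not} cancel in general. There is no reason to expect the averaged connecting terms to match for arbitrary $R_i$, and you offer no mechanism specific to $R_1$ that would make them match.

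What you are missing is that the hypothesis $R_1 \leq Q_1$ forces $A(1,R_1)(\K)=0$ outright: since $Q_1$ is the second upper skip, $M_k = k+1$ for all $k \leq Q_1-2$, so no induced subcomplex on $R_1 \leq Q_1$ vertices can have nonzero $\tilde H_1$. The paper then checks directly that $A(1,R_1)(\K')=0$ as well: in $\K'$ the pieces $R$ and $T$ meet along an \emph{edge} (hence a connected intersection), so Mayer--Vietoris gives $\tilde H_1(\K'[W]) \cong \tilde H_1(\K'[W]\cap R) \oplus \tilde H_1(\K'[W]\cap T)$; a minimal $W$ with $\tilde H_1(\K'[W])\neq 0$ therefore lies entirely inside $R$ or $T$, corresponds to a subset of $V(\K)$ with nonzero $\tilde H_1$, and hence has size exceeding $R_1$. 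So both sides vanish, and invariance is trivial. Replace your counting argument with this observation.
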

\proof We will show that the following quantities are preserved by shifting: $A(p,r)$ when $p \geq 2$, and $A(1,R_1)$.  From this, it follows from Lemma \ref{HConst} and the observation that shifting preserves $f_3$ that shifting then preserves $c_3(A(0,R_3)-A(1,R_3)) + c_2(A(1,R_2)-A(0,R_2)) + c_1A(0,R_1)$.

Let $R$ and $T$ be as above, and let $\K' = S(\K,p,q,R)$ be a shift of $\K$.  To show $A(p,r)(\K) = A(p,r)(\K')$ for $2 \leq r \leq n$ and $p \geq 2$, observe that $$A(p,r)(\K) = \frac{1}{{n \choose r}}\sum_{|W|=r}\tilde\beta_p(\K[W]) = \frac{1}{{n \choose r}}\sum_{|W|=r}(\tilde\beta_p(\K[W]\cap R)+\tilde\beta_p(\K[W]\cap T))$$ since $\tilde{H}_i(\K[W]\cap R \cap T) = 0$ for $i \geq 1$.  The same calculation hold for $\K'$, and so $A(p,r)(\K) = A(p,r)(\K')$.

To prove $A(1,R_1)$ is preserved under shifting, observe that $\K' = R \cup T$ and that $R \cap T$ in $\K'$ is an edge.  Choose $W \subset V(\K')$ of minimal size so that $\tilde{H}_1(\K'[W]) \neq 0$.  $\K'[W]\cap R' \cap T'$ is either empty or connected, so $\tilde{H}_1(\K'[W]) = \tilde{H}_1(\K'[W]\cap T') \oplus \tilde{H}_1(\K'[W]\cap R')$.  By minimality of $W$, $W \subset R'$ or $W \subset T'$.  Then there is a corresponding $W_1 \subset V(\K)$ so $\tilde{H}_1(\K[W_1]) \neq 0$.  By hypothesis, $|W_1| > R_1$, so $|W| > R_1$ and we conclude $A(1,R_1)(\K') = 0$. \endproof

\begin{lemma}
\label{LimShift}
Let $\K'$ be a shift of $\K$.  Then $A(1,n)(\K') < A(1,n)(\K)$.
\end{lemma}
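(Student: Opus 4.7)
The strategy is a direct Mayer-Vietoris computation, exploiting the fact that the shifting operation changes the topology of the gluing locus from two isolated vertices to a contractible edge. Since the only subset of $V(\K)$ of size $n$ is $V(\K)$ itself, $A(1,n)(\K) = \tilde{\beta}_1(\K)$, and likewise for $\K'$, so it suffices to prove $\tilde{\beta}_1(\K') = \tilde{\beta}_1(\K) - 1$.

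First I would set up the decomposition $\K = R \cup T$ with $||R|| \cap ||T|| = \{p,q\}$, as in the definition of shifting. Because $\K$ is $2$-connected and $||\K||-\{p,q\}$ is disconnected, both $R$ and $T$ are connected and contain $\{p,q\}$, while the edge $pq$ cannot lie in $\K$ (else the intersection would exceed two points). The reduced Mayer-Vietoris sequence, together with $\tilde{H}_0(R \cap T) = \field$ and $\tilde{H}_0(R) = \tilde{H}_0(T) = 0$, yields
\[
0 \to \tilde{H}_1(R) \oplus \tilde{H}_1(T) \to \tilde{H}_1(\K) \to \field \to 0,
\]
hence $\tilde{\beta}_1(\K) = \tilde{\beta}_1(R) + \tilde{\beta}_1(T) + 1$.

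Next I would carry out the analogous computation for the shift $\K' = S(\K,p,q,R)$. By construction $\K' = R'' \cup T''$, where $R'' \cong R$ (with $q$ relabeled as $q_1$) and $T'' \cong T$ (with $q$ relabeled as $q_2$ and $p_2$ as $p_1$). Their intersection consists of the vertices $p$ and $p_1 = p_2$ together with the edge $\{p,p_1\}$, inherited from $pp_1 \in R$ and $pp_2 \in T$; no other faces lie in the intersection, because only these vertices are shared after the split of $q$ and the identification of $p_1$ with $p_2$. Thus $R'' \cap T''$ is a single $1$-simplex, hence contractible, and Mayer-Vietoris collapses to
\[
0 \to \tilde{H}_1(R) \oplus \tilde{H}_1(T) \to \tilde{H}_1(\K') \to 0,
\]
giving $\tilde{\beta}_1(\K') = \tilde{\beta}_1(R) + \tilde{\beta}_1(T) = \tilde{\beta}_1(\K) - 1$.

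The one point requiring care is verifying that $R'' \cap T''$ is exactly this single edge with no additional higher-dimensional faces; this rests on the precise definition of shifting, namely that only $q$ is split and only the pair $p_1,p_2$ is identified, so no further vertex coincidences can produce extra shared faces. Once this is established, the drop of exactly one in $\tilde{\beta}_1$ follows automatically from the change of the connecting-map target from $\tilde{H}_0(R \cap T) = \field$ to $\tilde{H}_0(R'' \cap T'') = 0$.
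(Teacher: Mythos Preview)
Your proof is correct and follows essentially the same route as the paper: a Mayer--Vietoris computation for $\K = R \cup T$ (intersection two disconnected points, giving $\tilde\beta_1(\K) = \tilde\beta_1(R)+\tilde\beta_1(T)+1$) and then for $\K' = R'' \cup T''$ (intersection the connected edge $pp_1$, giving $\tilde\beta_1(\K') = \tilde\beta_1(R)+\tilde\beta_1(T)$). Your treatment is in fact slightly more explicit than the paper's about why the new intersection is exactly a $1$-simplex. One small imprecision: the parenthetical remark that ``the edge $pq$ cannot lie in $\K$'' does not quite follow from $||R||\cap||T||=\{p,q\}$---the edge could lie in exactly one of $R$ or $T$---but you never use this claim, and the simplicial intersection $R\cap T$ is still the two isolated vertices regardless, so the Mayer--Vietoris step is unaffected.
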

\proof If $\K$ is disconnected, restrict attention to the component of $\K$ in which the shift occurs.  Let $R$ and $T$ be as above.  For $\K$, there is a Mayer-Vietoris sequence 
$$\ldots \rightarrow \tilde{H}_1(R \cap T) \rightarrow \tilde{H}_1(R)\oplus\tilde{H}_1(T)\rightarrow \tilde{H}_1(\K) \rightarrow \tilde{H}_0(R \cap T) \rightarrow \tilde{H}_0(R) \oplus \tilde{H}_0(T) \rightarrow \ldots$$
Since $\tilde{H}_1(R \cap T) = 0$ and $R$ and $T$ are connected, this reduces to
$$0 \rightarrow \tilde{H}_1(R)\oplus\tilde{H}_1(T)\rightarrow \tilde{H}_1(R \cup T) \rightarrow \tilde{H}_0(R \cap T) \rightarrow 0.$$
Thus, $\tilde{\beta}_1(\K) = \tilde{\beta}_1(R)+\tilde{\beta}_1(T)+1$.  For $\K'$, there is a similar Mayer-Vietoris sequence 
$$0 \rightarrow \tilde{H}_1(R)\oplus\tilde{H}_1(T)\rightarrow \tilde{H}_1(R \cup T) \rightarrow \tilde{H}_0(R \cap T) \rightarrow 0.$$
Since $R \cap T$ is connected, $\tilde{\beta}_1(\K') = \tilde{\beta}_1(R)+\tilde{\beta}_1(T) = \tilde{\beta}_1(\K)-1$. \endproof

\proofof{Lemma \ref{CloseQ3Q2}} By Lemma \ref{LooseEdge}, assume no maximal face of $\K$ is an edge.  The quantity $c_3(A(0,R_3)-A(1,R_3)) + c_2(A(1,R_2)-A(0,R_2)) + c_1A(0,R_1)$ is preserved under shifting, so if $\K$ admits a shift, construct $\K'$ by shifting $\K$ repeatedly until $\K$ does not admit any more shifting.  By Lemma \ref{LimShift}, this occurs after a finite number of steps.  Since $\K'$ does not admit shifting, by Lemma \ref{TopDis}, $c_3A(1,R_3)(\K') \leq c_2A(1,R_2)(\K')$.  The inequality $c_3(A(0,R_3)-A(1,R_3)) + c_2(A(1,R_2)-A(0,R_2)) + c_1A(0,R_1) \geq 0$ follows from this and Lemma \ref{AZero}.  Furthermore, equality is attained only if each term is zero. \endproof

This completes the proof of Theorem \ref{DimThree}.

\section{Multiplicity Upper Bound on Homology Manifolds with Many Vertices}
\label{HomManifold}

A $(d-1)$-dimensional simplicial complex $\K$ is Gorenstein* over $\field$ (also called a \textit{homology sphere} over $\field$) if for every face $F \in \K$, $\tilde{H}_{d-1-|F|}(\lk_\K(F)) = \field$ and $\tilde{H}_{i}(\lk_\K(F)) = 0$ for $i < d-1-|F|$.  $\K$ is a \textit{homology manifold} over $\field$ if the previous conditions hold for all $F \neq \emptyset$.  If $\K$ is a connected homology manifold, then $\tilde{H}_{d-1}(\K)$ is either $0$ or $\field$.  In the latter case, we say that $\K$ is \textit{orientable} over $\field$.  The class of homology manifolds is an extension of the class of triangulations of topological manifolds.

Let $\K$ be a $(d-1)$-dimensional homology manifold with $n$ vertices and Euler characteristic $\chi = \beta_{0}(\K) - \beta_{1}(\K) + \ldots + (-1)^{d-1} \beta_{d-1}(\K)$ (here we used non-reduced Betti numbers).  The main result of this section is that if $d$ is odd and if $n$ is sufficiently large relative to $d$ and $|\chi|$, then $\K$ satisfies the multiplicity upper bound conjecture.  Furthermore, if $d \leq 10$, $\K$ is orientable over $\field$, and $n$ is large, then $\K$ satisfies the multiplicity upper bound conjecture.

First we need the following lemma.
\begin{lemma}
\label{MLink}
Let $\K$ be a $(d-1)$-dimensional simplicial complex with $n$ vertices.  Let $F$ be a face of $\K$ so that $\lk_\K(F)$ has dimension $d'$ and $n'$ vertices.  Then for $1 \leq i \leq n'-d'$, $M_i(\lk_\K(F)) \leq M_i(\K)$.
\end{lemma}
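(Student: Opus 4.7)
The plan is to use Hochster's interpretation (\ref{M-interpr}) of the maximal shifts and lift a witness $W$ for $M_i(\lk_\K(F))$ to a witness $W''$ for $M_i(\K)$ of at least the same size.

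First, using the identity $\lk_\K(F \cup \{v\}) = \lk_{\lk_\K(F)}(\{v\})$ (valid whenever $\{v\} \in \lk_\K(F)$), I would induct on $|F|$ to reduce to the single-vertex case $F = \{v\}$; the case $F = \emptyset$ is trivial.

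For $F = \{v\}$, let $W \subseteq V(\lk_\K(v))$ attain $M_i(\lk_\K(v))$, so that $\tilde H_{|W|-i-1}(\lk_\K(v)[W]) \neq 0$ by (\ref{M-interpr}). I would then set $W' := W \cup \{v\}$ and verify the decomposition $\K[W'] = \K[W] \cup \st_{\K[W']}(v)$, where the closed star is the cone $\{v\} * \lk_\K(v)[W]$ and so is contractible, while the intersection of the two pieces equals $\lk_\K(v)[W]$. Reduced Mayer--Vietoris then produces the fragment
\[
\tilde H_{|W|-i}(\K[W']) \longrightarrow \tilde H_{|W|-i-1}(\lk_\K(v)[W]) \longrightarrow \tilde H_{|W|-i-1}(\K[W]).
\]
A case split closes the argument. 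If the right-hand map fails to be injective, the leftmost term is nonzero; since $|W|-i = |W'|-i-1$, this gives $M_i(\K) \geq |W'| > |W|$. Otherwise, the injectivity combined with the nonzero middle term forces $\tilde H_{|W|-i-1}(\K[W]) \neq 0$, giving $M_i(\K) \geq |W|$ via the set $W$ itself. Either way $M_i(\K) \geq M_i(\lk_\K(v))$, and the inductive reduction completes the proof.

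The main obstacle, though routine, will be verifying the cover-intersection identities $\K[W'] = \K[W] \cup \st_{\K[W']}(v)$ and $\K[W] \cap \st_{\K[W']}(v) = \lk_\K(v)[W]$; both follow by chasing the definitions of induced subcomplex, closed star, and link. The hypothesized range $1 \leq i \leq n'-d'$ never actually enters the argument; it only serves to ensure that $M_i(\lk_\K(F))$ is a meaningful index in the resolution of the link.
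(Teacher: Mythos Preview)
Your proposal is correct and follows essentially the same argument as the paper: reduce to a single vertex by induction, write $\K[W\cup\{v\}]$ as the union of $\K[W]$ and the $v$-cone over $\lk_\K(v)[W]$ with intersection $\lk_\K(v)[W]$, and apply Mayer--Vietoris with the same case split. The only cosmetic difference is that the paper phrases the dichotomy as ``$\tilde H_{|W|-i-1}(\K[W])=0$ or not'' rather than ``the connecting map is injective or not,'' which is equivalent here since the cone summand vanishes.
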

\proof Let $F = \{v_1,\ldots,v_r\}$.  Let $\K_i = \lk_\K(v_1 \ldots v_i)$ and $\K_0 = \K$, so that for $1 \leq i \leq r$, $\K_i = \lk_{\K_{i-1}}(v_i)$.  By induction on $r$, we assume without loss of generality that $F$ is a single vertex $v$.

Let $i$ be given, and suppose that $M_i(\lk_\K(F)) = k$, so that there exists $W \subseteq \lk_\K(v)$ with $|W| = k$ such that $\Delta' := \lk_\K(v)[W]$ satisfies $\tilde{H}_{k-i-1}(\Delta') \neq 0$.  Let $\Delta := \K[W]$.  If $\tilde{H}_{k-i-1}(\Delta) \neq 0$, then $M_i(\K) \geq |W| = M_i(\lk_\K(v))$ as desired.  So now suppose $\tilde{H}_{k-i-1}(\Delta) = 0$.

Let $\Delta'' := \K[W \cup v]$, and let $\Delta_1$ be the $v$-cone over $\Delta'$.  Then we can write $\Delta'' = \Delta_1 \cup \Delta$.  Also $\Delta_1 \cap \Delta = \Delta'$, and we have the following exact sequence, which is part of the Mayer-Vietoris sequence on homology: 
$$\tilde{H}_{k-i}(\Delta'') \rightarrow \tilde{H}_{k-i-1}(\Delta') \rightarrow \tilde{H}_{k-i-1}(\Delta_1)\oplus \tilde{H}_{k-i-1}(\Delta).
$$
$\Delta_1$ is a cone, and hence $\tilde{H}_{k-i-1}(\Delta_1) = 0$.  Also, $\tilde{H}_{k-i-1}(\Delta) = 0$ and $\tilde{H}_{k-i-1}(\Delta') \neq 0$ by hypothesis.  Hence $\tilde{H}_{k-i}(\Delta'') \neq 0$.  Since $\Delta''$ is a subcomplex of $\K$ induced on $k+1$ vertices, then $M_i(\K) \geq k+1 > M_i(\lk_\K(v))$, which proves the result. \endproof

Suppose $\tilde{H}_{d'-1}(\lk_\K(F)) \neq 0$.  Then Lemma \ref{MLink} implies that $Q_i(\K) \leq Q_i(\lk_\K(F))$ for $1 \leq i \leq d'-1$.  We state a natural analog of Lemma \ref{MLink} for minimal shifts.  The proof is similar and will be omitted.

\begin{lemma}
Let $\K$ be a $(d-1)$-dimensional simplicial complex with $n$ vertices.  Let $F$ be an $(r-1)$-face of $\K$, so that $\lk_\K(F)$ has dimension $d'$ and $n'$ vertices.  Then for $1 \leq i \leq n'-d'$, $m_i(\lk_\K(F)) \geq m_i(\K)-r$.
\end{lemma}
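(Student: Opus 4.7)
The plan is to mirror the proof of Lemma \ref{MLink}, replacing its maximality dichotomy by a minimality one. First I would reduce to the case $r = 1$ by induction on $r$, using the identity $\lk_\K(F) = \lk_{\lk_\K(v_1)}(F \setminus \{v_1\})$: granting the lemma for faces of size $r-1$ and applying it inside $\lk_\K(v_1)$ yields $m_i(\lk_\K(F)) \geq m_i(\lk_\K(v_1)) - (r-1)$, and composing with the base case $m_i(\lk_\K(v_1)) \geq m_i(\K) - 1$ gives the desired inequality $m_i(\lk_\K(F)) \geq m_i(\K) - r$.

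For the base case $r = 1$ with $F = \{v\}$, choose $W' \subseteq V(\lk_\K(v))$ of minimum size with $\tilde{H}_{|W'|-i-1}(\lk_\K(v)[W']) \neq 0$, so that $|W'| = m_i(\lk_\K(v))$. It suffices to exhibit a set $W \subseteq V(\K)$ with $|W| \leq |W'|+1$ and $\tilde{H}_{|W|-i-1}(\K[W]) \neq 0$, for then $m_i(\K) \leq |W'|+1$. Split into two cases. If already $\tilde{H}_{|W'|-i-1}(\K[W']) \neq 0$, take $W = W'$ and the conclusion is immediate (indeed with room to spare). Otherwise, set $\Delta := \K[W' \cup \{v\}]$ and write $\Delta = \st_\Delta(v) \cup (\Delta - v)$, with intersection $\lk_\Delta(v) = \lk_\K(v)[W']$ and $\Delta - v = \K[W']$. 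The relevant segment of the Mayer--Vietoris long exact sequence,
\[
\tilde{H}_{|W'|-i}(\Delta) \to \tilde{H}_{|W'|-i-1}(\lk_\Delta(v)) \to \tilde{H}_{|W'|-i-1}(\st_\Delta(v)) \oplus \tilde{H}_{|W'|-i-1}(\K[W']),
\]
has a nonzero middle term (by the choice of $W'$) and a vanishing right term (the star is a cone, and $\tilde{H}_{|W'|-i-1}(\K[W']) = 0$ by the case assumption). This forces $\tilde{H}_{|W'|-i}(\Delta) \neq 0$, so $m_i(\K) \leq |W' \cup \{v\}| = |W'|+1$, as required.

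The main obstacle is orchestrating the dichotomy so that the Mayer--Vietoris sequence actually produces a nonvanishing homology group one dimension higher. The key observation is that the two terms flanking the link in the sequence both drop out simultaneously: one because the star is contractible, and the other precisely when the case assumption $\tilde{H}_{|W'|-i-1}(\K[W']) = 0$ holds---but if that assumption fails, we are already done by direct choice of $W = W'$. Once the base case is established, the inductive step on $r$ is purely bookkeeping.
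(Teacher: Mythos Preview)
Your argument is correct and is precisely the analog of the paper's proof of Lemma~\ref{MLink}: reduce to a single vertex by induction on $|F|$, then run the same Mayer--Vietoris dichotomy on $\K[W' \cup \{v\}] = \st(v) \cup \K[W']$ with intersection $\lk_\K(v)[W']$. The paper itself omits the proof, noting only that it is similar to that of Lemma~\ref{MLink}, so your write-up matches the intended approach exactly.
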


For the rest of this section, let $\K$ be a $(d-1)$-dimensional homology manifold with $n$ vertices and Euler characteristic $\chi$.  Assume that $n$ is sufficiently large relative to $d$ and $|\chi|$.  The next lemma is an important ingredient in our calculations.

\begin{lemma}
\label{Qbound}
There exists a function $z_d(k)$, independent of $\K$ and $n$, such that $Q_k(\K) \leq z_d(k)$ for all $1 \leq k \leq \lfloor \frac{d-1}{2}. \rfloor$
\end{lemma}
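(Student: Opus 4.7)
The plan is to induct on the dimension $d-1$, leveraging Lemma \ref{MLink} together with the observation that vertex links in a $(d-1)$-dimensional homology manifold are themselves $(d-2)$-dimensional homology spheres (hence Gorenstein*, with nonvanishing top reduced homology). For the base cases ($d \leq 4$, where $\lfloor (d-1)/2 \rfloor \leq 1$), only $Q_1(\K)$ must be bounded; this follows from elementary 1-skeleton connectivity considerations together with Heawood-type restrictions on the neighborliness of a manifold of bounded Euler characteristic and many vertices.

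For the inductive step, fix any vertex $v \in V(\K)$. Since $\lk_\K(v)$ is a $(d-2)$-dim homology sphere, $\tilde{H}_{d-2}(\lk_\K(v); \field) \neq 0$, and Lemma \ref{MLink} (together with the observation immediately following it) yields
\[
Q_k(\K) \leq Q_k(\lk_\K(v)) \quad \text{for } 1 \leq k \leq d-2.
\]
It therefore suffices to bound $Q_k(\lk_\K(v))$ uniformly in $k$. Since $\lk_\K(v)$ is itself a homology sphere (a special homology manifold), one may invoke the induction hypothesis, provided $v$ is chosen so that $\lk_\K(v)$ has sufficiently many vertices; this is arranged for $n$ large by a simple averaging argument over $V(\K)$, or by strengthening the inductive statement to apply to iterated vertex links.

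The link-based induction just described bounds $Q_k(\lk_\K(v))$ for $k \leq \lfloor (d-2)/2 \rfloor$, which exactly matches the required range $k \leq \lfloor (d-1)/2 \rfloor$ when $d$ is even, so the induction closes cleanly in that case. When $d$ is odd, however, the single additional value $k = (d-1)/2$ is not reached by the vertex-link reduction. For this boundary case I would use a separate argument exploiting the Dehn--Sommerville relations for homology manifolds together with Poincar\'{e} duality applied to induced subcomplexes, in order to constrain the middle-dimensional Betti numbers; this is also where the hypothesis that $n$ is large relative to $|\chi|$ enters the picture. The main obstacle is precisely this odd-$d$ boundary case: each iteration of the vertex-link reduction loses one unit of $k$-range per unit drop in dimension, so the naive induction falls one short of the middle skip, and genuine duality-based input is needed to close the gap.
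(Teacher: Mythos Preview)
Your vertex-link induction is a reasonable alternative to the paper's argument, but it is not a complete proof, and the missing piece is exactly where you say it is. Let me be concrete about both the difference in strategy and the gap.

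\medskip

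\textbf{Comparison with the paper.} The paper does \emph{not} induct on $d$. For each $k$ it chooses a face $F$ of dimension $d-2k-2$, so that $\lk_\K(F)$ is a $2k$-dimensional homology sphere. In that sphere the Dehn--Sommerville relation $h_k=h_{k+1}$ forces $f_k\le z'(k)\,f_{k-1}$ for a universal $z'(k)$; hence some $(k-1)$-face $F'$ lies in at most $kz'(k)$ many $k$-faces, and $\lk_\K(F\cup F')$ is a $k$-sphere with at most $kz'(k)$ vertices. Since all upper skips of a complex are bounded by its vertex count, $Q_k(\K)\le Q_k(\lk_\K(F\cup F'))\le kz'(k)$. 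When $d$ is odd and $k=(d-1)/2$, there is no such $F$ (the required dimension is $-1$), so the paper runs the same $f_k/f_{k-1}$ argument directly on $\K$, using the manifold Dehn--Sommerville relation $h_{k+1}-h_k=(-1)^k\binom{d}{k}(\chi-2)$; the correction term is negligible once $n$ is large relative to $|\chi|$.

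Your approach instead peels off one vertex at a time and appeals to the lemma in dimension $d-1$. Two remarks on the mechanics: first, the averaging argument is unnecessary, because if $\lk_\K(v)$ has few vertices then $Q_k(\lk_\K(v))$ is already bounded by that vertex count, so \emph{any} $v$ works and one may set $z_d(k)=\max\bigl(z_{d-1}(k),\,n_0(d-1)\bigr)$. Second, as you note, the induction closes for $d$ even and falls one short for $d$ odd.

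\medskip

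\textbf{The gap.} Your treatment of the case $k=(d-1)/2$ for odd $d$ is only a gesture, and the gesture points in the wrong direction. ``Poincar\'e duality applied to induced subcomplexes'' is not what is needed: induced subcomplexes of a homology manifold are not themselves manifolds, and duality gives you no handle on their middle Betti numbers. The actual mechanism is the one above: Dehn--Sommerville on $\K$ itself bounds $f_k/f_{k-1}$ uniformly (up to a $\chi$-correction absorbed by the hypothesis that $n$ is large), which yields a $(k-1)$-face whose link is a $k$-sphere on boundedly many vertices, and $Q_k$ is bounded by that vertex count via Lemma~\ref{MLink}.

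Note also that this boundary argument is not a one-time patch at the top of the induction: it recurs at \emph{every} odd $d$, including your base case $d=3$. Your ``Heawood-type'' remark for surfaces is morally the $d=3$ instance of the same $f_1/f_0$ bound (average degree $\approx 6$ for $n\gg|\chi|$, hence a vertex whose link is a short cycle), but you should recognise it as the general pattern rather than an ad hoc low-dimensional fact. Once you supply this $f_k/f_{k-1}$ step explicitly, your inductive scheme and the paper's direct scheme become essentially the same argument, organised differently.
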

\proof Choose $1 \leq k < \lfloor \frac{d-1}{2} \rfloor$, and let $F \in \K$ be a face of dimension $d - 2k - 2$.  By definition of a homology manifold, $\lk_\K(F)$ is a homology sphere of dimension $2k$.  By Lemma \ref{MLink} and the fact that $\tilde{H}_{2k}(\lk_\K(F)) \neq 0$, it suffices to prove that $Q_k(\lk_\K(F)) \leq z_d(k)$.

Use the Dehn-Sommerville equation $h_k = h_{k+1}$ on $\lk_\K(F)$ and the formula $h_k = \sum_{i=0}^k {d-i \choose d-k}f_{i-1}$ to solve for $f_k(\lk_\K(F))$ as a linear combination of $f_{-1}(\lk_\K(F))$, $f_0(\lk_\K(F))$, $\ldots,f_{k-1}(\lk_\K(F))$.  Since $k < \lfloor \frac{\dim \lk_\K(F)+1}{2} \rfloor$, we have $f_{-1}(\lk_\K(F)) < f_0(\lk_\K(F)) < \ldots < f_{k-1}(\lk_\K(F))$.  These inequalities follow from the fact that each $i$-face contains $(i+1)$ faces of dimension $(i-1)$, and each $(i-1)$-face is contained in at least $(2k+1-i)$ $i$-faces.  This yields $f_k(\lk_\K(F)) \leq z'(k)f_{k-1}(\lk_\K(F))$ for some function $z'$.  Since each $k$-face contains $k$ $(k-1)$-faces, there exists a $(k-1)$-face $F'$ of $\lk_\K(F)$ that is contained in no more than $kz'(k)$ $k$-faces.  Then $\lk_{\K}(F \cup F')$ is a homology sphere of dimension $k$ with at most $kz'(k)$ vertices.  Another application of Lemma \ref{MLink} yields $Q_k(\K) \leq Q_k(\lk_\K(F)) \leq kz'(k)$.  

If $k = (d-1)/2$, we have the following Dehn-Sommerville equation for homology manifolds \cite{Klee}: 
$$h_{d-k} - h_k = (-1)^k {d \choose k}(\chi - (1+(-1)^{d-1})).$$
Since $n$ is large relative to $\chi$, the term $(-1)^k {d \choose i}(\chi - (1+(-1)^{d-1}))$ is small relative to $f_{d-k-1}(\K)$ and the reasoning of the above paragraph applies. \endproof

Note that the assumption for $\K$ to be orientable over $\field$ is unnecessary for Lemma \ref{Qbound}.  The critical property is that $\lk_\K(F)$ is orientable when $\dim F \geq \lfloor \frac{d-2}{2} \rfloor$.  This is always the case for homology manifolds.

Now we are ready to prove the main result when $d$ is odd.

\begin{theorem}
\label{HomManOddD}
Let $\K$ be a $(d-1)$-dimensional homology manifold with $d$ odd, Euler characteristic $\chi$, and $n$ vertices.  Suppose $n$ is large relative to $d$ and $|\chi|$.  Then $\K$ satisfies the multiplicity upper bound conjecture without equality.
\end{theorem}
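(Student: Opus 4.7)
By Theorem \ref{UBIEqiv} together with the derivation in Section \ref{SkipSection} (which expresses $f_{d-1}(\K)$ as $U(\K)$ plus a linear combination of the quantities $\sum_j (-1)^{i-j} A(j, Q_i)$, with positive coefficients $c'_i$ proportional to the $c_i$), the desired inequality
$$\sum_{i=1}^{d-1} c_i \sum_{j=0}^{i-1} (-1)^{i-j-1} A(j, Q_i) > 0$$
is equivalent to the strict inequality $f_{d-1}(\K) < U(\K)$. My plan is therefore to compare the orders of magnitude in $n$ of these two quantities and show that $U(\K)$ strictly dominates $f_{d-1}(\K)$ once $n$ is large relative to $d$ and $|\chi|$.

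Write $m = (d-1)/2$. The first key input is Lemma \ref{Qbound}: the upper skips $Q_1, \ldots, Q_m$ are bounded above by constants $z_d(1), \ldots, z_d(m)$ independent of $n$. Since the remaining upper skips $Q_{m+1} < \cdots < Q_{d-1}$ are strictly increasing integers at most $n$, their product is bounded by $n(n-1) \cdots (n-m+1)$. Substituting into the formula for $U(\K)$ gives
$$U(\K) \geq \frac{(n-m)(n-m-1) \cdots (n-d+1)}{\prod_{i=1}^{m} z_d(i)},$$
of order $\Omega(n^{m+1})$ with implicit constant depending only on $d$. For the facet count, the modified Dehn--Sommerville relations for $(d-1)$-dimensional homology manifolds with $d$ odd (cited in the proof of Lemma \ref{Qbound}), $h_{d-i}(\K) = h_i(\K) + (-1)^i \binom{d}{i}(\chi - 2)$, allow us to express $h_{m+1}, \ldots, h_d$ in terms of $h_0, \ldots, h_m$ plus an additive $\chi$-correction. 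Combined with the standard estimate $|h_i(\K)| = O(n^i)$, which follows from $h_i = \sum_j (-1)^{i-j}\binom{d-j}{d-i} f_{j-1}$ and $f_{j-1} \leq \binom{n}{j}$, this yields $f_{d-1}(\K) = \sum_{i=0}^{d} h_i(\K) = O(n^m) + O(|\chi|)$, with implicit constants depending only on $d$.

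Combining the two estimates, $f_{d-1}(\K)/U(\K) = (O(n^m) + O(|\chi|))/\Omega(n^{m+1}) \to 0$ as $n \to \infty$, so for $n$ sufficiently large relative to $d$ and $|\chi|$ we obtain the strict inequality $f_{d-1}(\K) < U(\K)$, which by the equivalence in the first paragraph proves the multiplicity upper bound conjecture without equality. The main obstacle is the Euler-characteristic correction from Dehn--Sommerville: since $\chi$ enters only additively in the expression for $h_{d-i}$, it contributes only an $O(|\chi|)$ term to $f_{d-1}$, and the required threshold on $n$ ends up being linear in $|\chi|$ up to constants depending on $d$; it is critical that $d$ is odd so that Lemma \ref{Qbound} controls as many as $m$ upper skips (rather than $m-1$ in the even case), yielding the decisive gain of one factor of $n$ in the comparison.
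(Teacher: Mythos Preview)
Your argument is correct and follows essentially the same route as the paper: bound $U(\K)$ below by a quantity of order $n^{(d+1)/2}$ using Lemma~\ref{Qbound} to control $Q_1,\ldots,Q_{(d-1)/2}$, bound $f_{d-1}(\K)$ above by $O(n^{(d-1)/2})+O(|\chi|)$ via the Dehn--Sommerville relations, and compare. The only cosmetic difference is that you open by passing through Theorem~\ref{UBIEqiv}, whereas the statement $f_{d-1}(\K)<U(\K)$ is already the multiplicity upper bound without equality, so that detour is unnecessary.
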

\proof For all simplicial complexes, $h_i$ is bounded above by a polynomial of degree $i$ in $n$.  Using $h_{d-i} - h_i = (-1)^i {d \choose i}(\chi - (1+(-1)^{d-1}))$ and the fact that $\chi$ is small relative to $n$, we have that $f_{d-1} = \sum_{i=0}^d h_i$ is also bounded above by a degree $(d-1)/2$ polynomial in $n$.

By Lemma \ref{Qbound}, $Q_i \leq z_d(i)$ for some function $z_d$ and $0 \leq i \leq (d-1)/2$.  Hence, it suffices to show that
$$
f_{d-1} < \frac{\prod_{i=0}^{d-1} (n-i)}{\prod_{i=1}^{d-1}Q_i} \leq \frac{\prod_{i=0}^{d-1} (n-i)}{\prod_{i=1}^{\frac{d-1}{2}}z_d(i) \prod_{i=\frac{d+1}{2}}^{d-1}Q_i}.
$$
The latter term is bounded below by a polynomial in $n$ of degree $(d+1)/2$, which proves the result. \endproof

The condition that $n$ is large is necessary for the assertion that $\K$ does not attain the upper bound.  For example, the boundary of the cross polytope of dimension $d$ has $2d$ vertices and it has a pure resolution.

For the remainder of this section, we will assume that $\K$ is orientable over the base field $\field$.

\begin{theorem}
\label{EvenD}
Let $\K$ be as above, where $d \leq 10$.  Then $\K$ satisfies the multiplicity upper bound conjecture.  Furthermore, $\K$ attains the upper bound if and only if $\K$ is Gorenstein* and $(d/2)$-neighborly.
\end{theorem}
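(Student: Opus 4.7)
The plan is to compare $f_{d-1}(\K)$ with $U(\K)=\prod_{i=0}^{d-1}(n-i)/\prod_{i=1}^{d-1}Q_i(\K)$ as polynomials in $n$, analogously to the proof of Theorem \ref{HomManOddD} but with much more care, because the two sides now have the same leading order. By Lemma \ref{Qbound}, $Q_i(\K)\le z_d(i)$ for $1\le i\le d/2-1$, while the remaining $d/2$ upper skips $Q_{d/2},\ldots,Q_{d-1}$ are at most $n$; thus the denominator of $U(\K)$ has degree exactly $d/2$ in $n$ and the numerator has degree $d$, so $U(\K)=\Theta(n^{d/2})$. On the other side, for even $d$ the manifold Dehn--Sommerville relation of \cite{Klee} simplifies to $h_{d-k}-h_k=(-1)^k\binom{d}{k}\chi$, allowing $f_{d-1}=\sum_i h_i$ to be written as a polynomial in $h_0,\ldots,h_{d/2}$ plus a correction depending only on $d$ and $\chi$; the Upper Bound Theorem for orientable homology manifolds bounds each $h_i$ by $O(n^i)$, so $f_{d-1}(\K)$ is also $\Theta(n^{d/2})$.

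With these estimates in hand, I would verify the inequality $f_{d-1}(\K)\le U(\K)$ by a direct comparison of leading coefficients. Because $d\le 10$, the tuple $(Q_1,\ldots,Q_{d/2-1})$ ranges over a finite set of values dictated by Lemma \ref{Qbound}, and for each profile I would bound the dominant $n^{d/2}$-term of $f_{d-1}$ by the UBT estimate on $h_{d/2}$ and compare it with the corresponding term of $U(\K)$. The key observation is that when $Q_i=i+1$ for $i<d/2$, the rational function $\prod_{i=0}^{d-1}(n-i)/\prod_{i=1}^{d-1}Q_i$ agrees to leading order with the UBT expression $\binom{n-d/2-1}{d/2}$ for $h_{d/2}$ of a $(d/2)$-neighborly sphere, so the comparison reduces to showing that inflating any $Q_i$ for $i<d/2$ or falling short of the UBT bound only improves the inequality.

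For the equality characterization: in the ``if'' direction, a $(d/2)$-neighborly Gorenstein* complex has $Q_i=i+1$ for $i<d/2$ (because $\K[W]$ is a full simplex whenever $|W|\le d/2$), and by Alexander duality for homology spheres every induced subcomplex has vanishing reduced homology outside a single dimension determined by $|W|$; this forces $A(j,Q_i)=0$ for all $j<i$, whence Proposition \ref{UPureRes} delivers a pure resolution and the Huneke--Miller formula gives $f_{d-1}(\K)=U(\K)$. Conversely, if $\K$ attains the upper bound, then equality in the leading-coefficient comparison forces equality in the UBT (so $\K$ is $(d/2)$-neighborly) together with $Q_i=i+1$ for $i<d/2$; combined with $\tilde{H}_{d-1}(\K)=\field$ (which follows from $Q_{d-1}=n$ via orientability) and Reisner's criterion applied to all nonempty links, this identifies $\K$ as Gorenstein*.

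The main obstacle is the leading-coefficient comparison of the second paragraph: unlike the odd-dimensional case, where the degree gap in $n$ produces strict inequality essentially for free, here the inequality is tight and one must track exact constants coming from $z_d(i)$ and from Stanley's UBT. The hypothesis $d\le 10$ reflects the fact that these numerical estimates can be verified case by case for small even $d$ but become progressively harder to control as $d$ grows, since Lemma \ref{Qbound} gives no bound on $Q_{d/2}$ and the interplay between that unbounded skip and the UBT bound on $h_{d/2}$ is what the dimension constraint is really limiting.
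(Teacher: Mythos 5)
Your high-level picture (compare $f_{d-1}$ and $U(\K)$ as polynomials in $n$, use Lemma~\ref{Qbound} to bound the low skips, use the manifold Dehn--Sommerville relations and the UBT to control $f_{d-1}$) is the same as the paper's, and the easy case $Q_i=i+1$ for $i<d/2$ does reduce cleanly to Theorem~\ref{UHM}. But there are two concrete gaps in the remaining case $Q_{d/2-1}>d/2$.

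First, your claim that ``inflating any $Q_i$ for $i<d/2$\ldots only improves the inequality'' has the sign of the effect backwards: increasing a $Q_i$ with $i<d/2$ makes the denominator of $U(\K)$ larger and hence $U(\K)$ smaller, so the inequality $f_{d-1}\le U(\K)$ becomes \emph{harder}, not easier. This is exactly the hard case of the paper, and it requires a compensating drop in $f_{d-1}$; that drop is extracted via Lemmas~\ref{NearUBI3}, \ref{NearerUBI3}, and \ref{TopQ}, which apply the (already-established) low-dimensional multiplicity upper bound to $5$-dimensional induced subcomplexes and average. Your proposal gives no mechanism producing this compensation, so the leading-coefficient comparison cannot be carried out as stated.

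Second, you never control the high skips $Q_{d/2},\ldots,Q_{d-1}$, which sit in the denominator of $U(\K)$ and can range anywhere up to roughly $n$. If they are as large as possible, $U(\K)$ shrinks to its minimum and the inequality is genuinely in doubt. The paper closes this by observing that links of faces are homology spheres of lower dimension and that the $Q_i$ are nonincreasing under links (the corollary of Lemma~\ref{MLink}), yielding $Q_i < (d-i+1)\,f_{d-i-1}/f_{d-i-2}$; this ties $\prod Q_i$ directly to the $f$-vector (Equation~(\ref{HM5}) in the proof) and is what makes the final comparison close. Without some such link, the estimate $U(\K)=\Theta(n^{d/2})$ has no usable lower bound.

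For the equality characterization, the ``if'' direction is essentially fine (the paper's Lemma~\ref{d2Neighborly} uses self-duality of the Gorenstein resolution rather than Alexander duality on induced subcomplexes, but either can be made to work). The ``only if'' direction, however, is not delivered by ``Reisner's criterion applied to all nonempty links'': for a homology manifold that criterion is automatic at nonempty faces, and the issue is precisely the \emph{empty} face. Being $(d/2)$-neighborly kills $\tilde H_i(\K)$ for $i<d/2-1$, and Poincar\'e duality then kills $i$ strictly between $d/2$ and $d-1$, but the middle Betti number $\tilde\beta_{d/2-1}(\K)$ is untouched. The paper's Lemma~\ref{HomManEquality} uses Lefschetz duality to show that $\tilde\beta_{d/2-1}(\K)\neq 0$ would force $M_{n-d-1}(\K)\ge n-d/2$ and hence push $U(\K)$ strictly above the attained value; you need an argument of this kind, and your proposal does not supply one.
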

Before proving this result, we need the following weakenings of the multiplicity conjecture for $d \leq 5$.
\begin{lemma}
\label{NearUBI3}
Let $\Delta$ be a $(d-1)$-dimensional simplicial complex with $d \leq 5$ and $n$ vertices.  Then $$f_{d-1}(\Delta) \leq \frac{\prod_{i=0}^{d-1} (n-i)}{d\prod_{i=1}^{d-2} Q_i}.$$
\end{lemma}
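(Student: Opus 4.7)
The lemma is a weakening of the multiplicity upper bound conjecture: in the denominator, the factor $Q_{d-1}$ has been replaced by $d$. Since the upper skips satisfy $1 = Q_0 < Q_1 < \cdots < Q_{d-1}$, we always have $Q_{d-1} \geq d$, so this substitution only weakens the bound. Consequently, for $d \leq 4$ the lemma is immediate from the multiplicity upper bound conjecture itself, which is established on all $(d-1)$-dimensional complexes by Theorem \ref{DimThree} when $d = 4$ and by the earlier results of \cite{NovSw} when $d \leq 3$.

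The essential case is $d = 5$. My plan is to use the link averaging identity
\[
5 \cdot f_4(\Delta) \;=\; \sum_{v \in V(\Delta)} f_3(\lk_\Delta(v)),
\]
obtained by double-counting pairs $(F, v)$ with $F$ a $4$-face and $v \in F$. Each link $\lk_\Delta(v)$ has dimension at most $3$, so Theorem \ref{DimThree} applied to it yields
\[
f_3(\lk_\Delta(v)) \;\leq\; \frac{n_v (n_v - 1)(n_v - 2)(n_v - 3)}{Q_1(\lk_\Delta(v))\, Q_2(\lk_\Delta(v))\, Q_3(\lk_\Delta(v))},
\]
where $n_v \leq n - 1$ is the number of vertices of $\lk_\Delta(v)$. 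Because $n_v \leq n - 1$, the numerator on the right is bounded by $(n-1)(n-2)(n-3)(n-4)$, so it remains to show
\[
\sum_{v} f_3(\lk_\Delta(v)) \;\leq\; \frac{n(n-1)(n-2)(n-3)(n-4)}{Q_1(\Delta)\, Q_2(\Delta)\, Q_3(\Delta)},
\]
which, divided by $5$, gives the lemma.

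The main obstacle is translating the $Q_j(\lk_\Delta(v))$ terms into $Q_j(\Delta)$. Lemma \ref{MLink} supplies $M_i(\lk_\Delta(v)) \leq M_i(\Delta)$, but this does not produce a clean monotonicity on the upper skips: small examples --- for instance a $4$-simplex together with a single isolated vertex --- show that $Q_j(\lk_\Delta(v))$ can be strictly smaller than $Q_j(\Delta)$ when $n_v$ is much less than $n - 1$. To navigate this, I would first dispatch the case $M_1(\Delta) \geq 4$ directly by Theorem \ref{LowSkips}, which already gives the full multiplicity upper bound conjecture. In the remaining range $M_1(\Delta) \leq 3$, the shifts of $\Delta$ are severely constrained --- no induced homology sphere of dimension $\geq 1$ exists when $M_1 = 2$, and none of dimension $\geq 2$ exists when $M_1 = 3$ --- and a direct combinatorial case analysis on $M_1 \in \{2,3\}$ (combined with induction on $n$ or on the number of minimal non-faces) should aggregate the per-vertex bounds correctly. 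The most delicate sub-case is $M_1(\Delta) = 2$: the missing edge forces many of the later shifts $M_i$ to be large, hence a small $Q_{d-1}$ on the right-hand side, so the aggregation must carefully exploit that high-degree vertices contribute the bulk of $5 f_4$.
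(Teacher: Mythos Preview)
Your link-averaging identity $5f_4(\Delta)=\sum_{v} f_3(\lk_\Delta v)$ is exactly what the paper uses for $d=5$, so the overall strategy is right. The gap is in how you bound each summand: you pass to the $Q_i$-form of the $d=4$ upper bound and then try to compare $Q_j(\lk_\Delta v)$ with $Q_j(\Delta)$, which (as you yourself note) can go the wrong way. The remedy is simply not to leave the $M_i$-form. Writing $U(\lk_\Delta v)=\prod_{i=1}^{c_v}M_i(\lk_\Delta v)/c_v!$ with $c_v=\codim(\lk_\Delta v)\le n-5$, Lemma~\ref{MLink} gives $M_i(\lk_\Delta v)\le M_i(\Delta)$ termwise, and since $M_i(\Delta)\ge i+1$ one obtains
\[
\frac{\prod_{i=1}^{c_v}M_i(\lk_\Delta v)}{c_v!}\;\le\;\frac{\prod_{i=1}^{n-5}M_i(\Delta)}{(n-5)!}\;=\;U(\Delta).
\]
Summing over $v$ yields $5f_4(\Delta)\le n\,U(\Delta)$, which is exactly the lemma when $Q_4(\Delta)=n$, i.e.\ when $\tilde H_4(\Delta)=0$. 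The complementary case $\tilde H_4(\Delta)\neq 0$ is dispatched separately by the antistar identity $(n-5)f_4(\Delta)=\sum_v f_4(\Delta-v)$ and induction on $n$, just as in Lemma~\ref{HomRed}.

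Your proposed fallback---a case analysis on $M_1\in\{2,3\}$ combined with an unspecified induction---is not actually an argument as written; the ``delicate sub-case $M_1=2$'' is left entirely open. Once you stay in the $M_i$-form and split on whether $\tilde H_4(\Delta)$ vanishes, no such case analysis is needed.
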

Lemma \ref{NearUBI3} differs from the multiplicity upper bound conjecture in that $Q_{d-1}$ is replaced by $d$.  Since the maximal shift sequence of $\Delta$ is strictly increasing, $Q_{d-1} \geq d$ is always satisfied, and Lemma \ref{NearUBI3} is weaker than the multiplicity upper bound conjecture.

\proofof{Lemma \ref{NearUBI3}} The multiplicity upper bound conjecture is known to hold for $d \leq 4$ (Theorem \ref{DimThree}), so we restrict our attention to $d=5$.  First consider the case that $\tilde{H}_4(\K) = 0$.  Then for each $v \in V(\K)$,
\begin{equation}
\label{NearUBI3Equation}
f_3(\lk_\K(v)) \leq \frac{\prod_{i=1}^{\codim \lk_\K(v)} M_i(\lk_\K(v))}{(\codim \lk_\K(v))!} \leq \frac{\prod_{i=1}^{n-d} M_i(\K)}{(n-d)!}.
\end{equation}
The first inequality follows by the multiplicity upper bound conjecture for $d=4$ and the second inequality follows by Lemma \ref{MLink} and the fact that $\codim \lk_\K(v) \leq n-d$ for all $v \in V(\K)$.  Since each $4$-face of $\K$ contains $5$ vertices, summing the inequalities of Equation (\ref{NearUBI3Equation}) over all $v \in V(\K)$ yields the result.

Now consider the case $\tilde{H}(\K) \neq 0$.  The result follows by a variant of the calculation in Equation (\ref{HomRedEquation}).
\endproof

\begin{lemma}
\label{TopQ}
Let $\Delta$ be a $(d-1)$-dimensional simplicial complex with $n$ vertices.  Then $$f_{d-1}(\Delta) \leq \frac{d}{Q_{d-1}(\Delta)}{n \choose d}.$$
\end{lemma}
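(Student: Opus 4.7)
The plan is to combine a double counting argument over vertex subsets of size $q := Q_{d-1}(\Delta)$ with a linear algebraic bound on the top face number of a complex whose top reduced Betti number vanishes.  If $q = d$ the stated inequality reduces to the trivial $f_{d-1}(\Delta) \leq \binom{n}{d}$, so I may assume $q > d$ throughout.

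First I would translate the skip hypothesis into a homological vanishing statement.  Since the upper skips satisfy $1 = Q_0 < Q_1 < \cdots < Q_{d-1} = q$, all of $Q_0, \ldots, Q_{d-2}$ lie in $[1, q-1]$, so there are exactly $q - d$ non-skips in $[1, q-1]$.  Because the maximal shifts are strictly increasing by Lemma \ref{IncM}, these $q - d$ integers are precisely $M_1(\Delta) < \cdots < M_{q-d}(\Delta)$, so $M_{q-d}(\Delta) \leq q - 1$.  By the defining property of $M_{q-d}$ (equivalently, by Hochster's formula), this forces $\tilde{\beta}_{|W|-(q-d)-1}(\Delta[W]) = 0$ for every $W$ with $|W| > M_{q-d}$.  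Specializing to $|W| = q$ gives $\tilde{\beta}_{d-1}(\Delta[W]) = 0$ for every such $W$.

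Next I would establish the following sub-lemma: if $\Gamma$ is a simplicial complex on at most $q$ vertices with $\dim \Gamma \leq d-1$ and $\tilde{H}_{d-1}(\Gamma; \field) = 0$, then $f_{d-1}(\Gamma) \leq \binom{q-1}{d-1}$.  To prove it, I would view $C_{d-1}(\Gamma)$ as a subspace of the top chain space of the full $(d-1)$-skeleton of the $(q-1)$-simplex.  A standard Euler characteristic computation shows that this ambient skeleton has $\tilde{\beta}_{d-1} = \binom{q-1}{d}$, so the rank of its top boundary operator $\partial_{d-1}$ equals $\binom{q}{d} - \binom{q-1}{d} = \binom{q-1}{d-1}$.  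The restriction of $\partial_{d-1}$ to $C_{d-1}(\Gamma)$ has rank at most $\binom{q-1}{d-1}$; since $\Gamma$ has no $d$-faces, this rank also equals $f_{d-1}(\Gamma) - \tilde{\beta}_{d-1}(\Gamma) = f_{d-1}(\Gamma)$ under the vanishing hypothesis, and the sub-lemma follows.

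Finally I would finish by double counting.  Each $(d-1)$-face of $\Delta$ lies in exactly $\binom{n-d}{q-d}$ vertex sets of size $q$, so
$$\binom{n-d}{q-d}\, f_{d-1}(\Delta) = \sum_{|W|=q} f_{d-1}(\Delta[W]) \leq \binom{n}{q}\binom{q-1}{d-1},$$
where the inequality is the sub-lemma applied to each $\Delta[W]$.  Rearranging and simplifying the resulting binomial identity collapses the right side to $\tfrac{d}{q}\binom{n}{d}$, which is the desired bound.  The only mildly delicate step is the rank computation in the sub-lemma; the averaging and binomial arithmetic are routine, so I do not expect the main obstacle to lie elsewhere.
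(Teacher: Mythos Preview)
Your argument is correct and follows the same three-step skeleton as the paper: (i) deduce $\tilde{H}_{d-1}(\Delta[W])=0$ for every $W$ of size $q=Q_{d-1}$, (ii) bound $f_{d-1}(\Delta[W])$ by $\binom{q-1}{d-1}=\frac{d}{q}\binom{q}{d}$, and (iii) average over all such $W$. The one place you differ is step (ii): the paper simply invokes the Bj\"orner--Kalai extended Euler--Poincar\'e theorem for this inequality, whereas you supply a direct linear-algebra proof via the rank of the top boundary map of the $(d-1)$-skeleton of the $(q-1)$-simplex. Your route is more elementary and self-contained (only the easy half of the relevant Bj\"orner--Kalai inequality is needed here, and your rank computation captures exactly that), while the paper's citation buys brevity at the cost of importing a much stronger external result. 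Your detailed unpacking of why $Q_{d-1}$ being a skip forces $M_{q-d}\le q-1$, and hence the desired vanishing, is also more explicit than the paper's one-line ``by definition of $Q_{d-1}$.''
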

Lemma \ref{TopQ} differs from the multiplicity upper bound conjecture in that $Q_i$ is replaced by $i+1$ for $1 \leq i \leq d-2$.

\proofof{Lemma \ref{TopQ}} Suppose $W \subset V(\Delta)$ and $|W| = Q_{d-1}$.  Then by definition of $Q_{d-1}$, $\tilde{H}_{d-1}(\Delta[W]) = 0$.  By the main result of \cite{BK}, $f_{d-1}(\Delta[W]) \leq \frac{d}{Q_{d-1}}{Q_{d-1} \choose d}$.  Adding over all $W \subset V(\Delta)$ with $|W| = Q_{d-1}$ yields $f_{d-1}(\Delta) \leq \frac{d}{Q_{d-1}}{n \choose d}$.
$\Box$

We also need the following refinement to Lemma \ref{NearUBI3}.
\begin{lemma}
\label{NearerUBI3}
Suppose the conditions of Lemma \ref{NearUBI3} hold, and in addition $Q_{d-1} > d$.  Let $R_i, 1 \leq i \leq d-1$ be arbitrary positive integers satisfying $R_i \leq Q_i$.  There exists a $\xi > 0$ which depends only on $d$ and the $R_i$ such that $$f_{d-1} \leq (1-\xi)\frac{\prod_{i=0}^{d-1} (n-i)}{d\prod_{i=1}^{d-2} R_i}$$ if $n$ is sufficiently large.
\end{lemma}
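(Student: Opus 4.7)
The plan is to refine Lemma \ref{NearUBI3} by exploiting both the slack $R_i\leq Q_i$ and the new hypothesis $Q_{d-1}>d$, splitting into two cases based on whether the $R_i$ saturate the $Q_i$ for $1\leq i\leq d-2$.

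If there is some $i_0\in\{1,\ldots,d-2\}$ with $R_{i_0}<Q_{i_0}$, then combining $Q_{i_0}\geq R_{i_0}+1$ with $Q_i\geq R_i$ for the other indices gives
\[
\prod_{i=1}^{d-2} Q_i \;\geq\; \frac{R_{i_0}+1}{R_{i_0}}\prod_{i=1}^{d-2} R_i,
\]
so Lemma \ref{NearUBI3} directly yields $f_{d-1}\leq \frac{R_{i_0}}{R_{i_0}+1}\cdot\frac{\prod_{i=0}^{d-1}(n-i)}{d\prod_{i=1}^{d-2} R_i}$, and I can take $\xi_1=\min_i 1/(R_i+1)$, a quantity depending only on $d$ and the $R_i$.

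If instead $R_i=Q_i$ for every $i\leq d-2$, Lemma \ref{NearUBI3} offers no slack, so the hypothesis $Q_{d-1}\geq d+1$ becomes essential.  I would bring in Lemma \ref{TopQ} to get $f_{d-1}\leq \frac{d}{d+1}\binom{n}{d}=\frac{\prod_{i=0}^{d-1}(n-i)}{(d+1)(d-1)!}$, and dividing by the target produces $\xi_2\geq 1-\frac{d\prod R_i}{(d+1)(d-1)!}$.  When the $R_i$ are at their minimum ($R_i=i+1$) this is exactly $1/(d+1)$, and more generally $\xi_2$ is a positive constant depending only on $d$ and the $R_i$ whenever $\prod R_i$ is small enough for this quantity to be positive.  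In the remaining sub-regime, where all $R_i=Q_i$ but $\prod R_i$ is already so large that neither Lemma \ref{NearUBI3} nor Lemma \ref{TopQ} alone strictly beats the target, I would refine Lemma \ref{NearUBI3}'s proof itself: replace the crude estimate $f_{d-2}(\lk_\K(v))\leq \prod M_i(\K)/(n-d)!$ by the full multiplicity upper bound (Theorem \ref{DimThree}), which applies since $\dim\lk_\K(v)\leq 3$ when $d\leq 5$, translate the resulting $Q_i(\lk_\K(v))$ back to $\K$ via Lemma \ref{MLink}, and extract a fixed multiplicative gain from the strict inequality $Q_{d-1}\geq R_{d-2}+1$.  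The "$n$ sufficiently large" hypothesis is invoked here to absorb the $O(n^{d-1})$ error terms coming from the $n_v\leq n-1$ vertex count of each link, so that $\xi$ ends up a function of $d$ and the $R_i$ alone.

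The main obstacle I expect is precisely this last sub-regime: showing that the link-level refinement yields a uniform strict improvement when both standard bounds have simultaneously become tight.  I anticipate that a careful averaging over $v\in V(\K)$, together with the observation that not all $\lk_\K(v)$ can simultaneously saturate Theorem \ref{DimThree} without forcing $\K$ itself into a very restricted combinatorial form (in particular, into $2$-neighborliness, which would collapse the $Q_i$ down to their minimum and contradict the assumption $R_i = Q_i$ with $R_i$ large), will supply the required fixed gain.
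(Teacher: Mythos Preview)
Your case split is sensible, and Cases~1 and~2a (using the slack $R_{i_0}<Q_{i_0}$, respectively Lemma~\ref{TopQ} when $\prod R_i$ is small) parallel the paper's treatment. The genuine gap is your Case~2b, the sub-regime where $R_i=Q_i$ for all $i\leq d-2$ and $\prod R_i$ is too large for Lemma~\ref{TopQ} to give a gain.

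Your plan there does not work as stated. For $d=5$ the links $\lk_\Delta(v)$ are at most $3$-dimensional, so applying Theorem~\ref{DimThree} to them involves only $Q_1,Q_2,Q_3$ of the link, which by Lemma~\ref{MLink} dominate the corresponding $Q_i(\Delta)=R_i$; the new hypothesis $Q_4(\Delta)>5$ simply never enters this link-level bound, and summing over $v$ just reproduces Lemma~\ref{NearUBI3} without any strict improvement. The inequality $Q_{d-1}\geq R_{d-2}+1$ you invoke is, in this case, just $Q_{d-1}\geq Q_{d-2}+1$, which holds automatically and carries no information. Your fallback---that not all links can simultaneously saturate Theorem~\ref{DimThree} without forcing $\Delta$ into $2$-neighborliness---is an unproven claim, and even if true would not obviously deliver a \emph{uniform} $\xi$ depending only on $d$ and the $R_i$.

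The paper handles this case by an integrality trick you do not mention. Since $Q_3>4$ forces $Q_1Q_2Q_3\geq 30>24$, one can choose a fixed $n_0>Q_4$ for which $\frac{n_0(n_0-1)(n_0-2)(n_0-3)(n_0-4)}{5Q_1Q_2Q_3}$ is not an integer. Every induced subcomplex $\Delta[W]$ with $|W|=n_0$ has upper skips at least the $Q_i$, so Lemma~\ref{NearUBI3} bounds the integer $f_4(\Delta[W])$ strictly below this non-integer, hence below it by a fixed fraction $\xi>0$ depending only on $n_0$ and the $Q_i=R_i$. Averaging over all such $W$ then gives the uniform $(1-\xi)$ bound for all $n>n_0$. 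This is where the hypothesis $Q_{d-1}>d$ is actually consumed: without it one could have $Q_i=i+1$ for all $i$, making the Lemma~\ref{NearUBI3} bound an integer for every $n_0$ and killing the argument.
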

Note that the minimum choice for $n$ might depend on $\xi$.

\proof If suffices to take $R_i = Q_i$ for $1 \leq i \leq d-1$.  As with Lemma \ref{NearUBI3}, Lemma \ref{NearerUBI3} holds for $d \leq 4$ since the multiplicity upper bound conjecture is known to hold in this case.  We consider $d=5$.  If $Q_4 > 5$ and $Q_3 = 4$, then the result follows from Lemma \ref{TopQ} by taking $\xi \leq \frac{Q_4-5}{Q_4}$.  Consider $Q_3 > 4$.  There exists an integer $n_0 > Q_4$ so that $\frac{n_0(n_0-1)(n_0-2)(n_0-3)(n_0-4)}{5Q_1Q_2Q_3}$ is not an integer.  Then for $W \subset V(\Delta)$ and $|W| = n_0$, $\Delta[W]$ is a complex with $n_0$ vertices, dimension at most $4$, and upper skips $Q_i$ or greater.  By Lemma \ref{NearUBI3},  
$$f_4(\Delta[W]) < \frac{n_0(n_0-1)(n_0-2)(n_0-3)(n_0-4)}{5Q_1Q_2Q_3},$$
and so
$$f_4(\Delta[W]) \leq (1 - \xi) \frac{n_0(n_0-1)(n_0-2)(n_0-3)(n_0-4)}{5Q_1Q_2Q_3}$$
for some $\xi > 0$.  If $n > n_0$, sum over all $W \subset V(\Delta)$ with $|W| = n_0$ to obtain the result. 
\endproof

We also need the upper bound theorem for homology manifolds, which is Theorem 1.4 of \cite{NovUHM}.
\begin{theorem}
\label{UHM}
If $d$ is even and $\K$ is a $(d-1)$-dimensional homology manifold, then 
\begin{equation}
\label{UHMEquation}
f_{d-1}(\K) \leq \frac{n(n-\frac{d}{2}-1)(n-\frac{d}{2}-2)\ldots(n-d+1)}{(d/2)!}-
\end{equation}
Furthermore, equality is attained only if $\K$ is $(d/2)$-neighborly.
\end{theorem}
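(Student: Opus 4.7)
The plan is to reduce the bound on $f_{d-1}(\K)$ to the classical upper bound theorem for homology spheres, applied link by link. First I would invoke the identity
\[
f_{d-1}(\K) \;=\; \frac{1}{d}\sum_{v \in V(\K)} f_{d-2}(\lk_\K(v)),
\]
which holds because each $(d-1)$-face of $\K$ contains exactly $d$ vertices and therefore contributes one $(d-2)$-face to each of $d$ vertex links. Because $\K$ is a $(d-1)$-dimensional homology manifold, every $\lk_\K(v)$ is a $(d-2)$-dimensional homology sphere over $\field$ with some $n_v \leq n-1$ vertices; in particular each such link has a Cohen--Macaulay Stanley--Reisner ring, so Stanley's upper bound theorem applies to it.

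Next, since $d$ is even, $d-1$ is odd, and Stanley's UBT for homology spheres yields
\[
f_{d-2}(\lk_\K(v)) \;\leq\; f_{d-2}(C(n_v, d-1)) \;=\; 2\binom{n_v - d/2}{d/2 - 1},
\]
where the right-hand equality is the standard facet count for the boundary of an odd-dimensional cyclic polytope (it follows from $f_{d-2}(C(n_v,d-1)) = 2\sum_{i=0}^{d/2-1} \binom{n_v - d + i}{i}$ and a single application of the hockey-stick identity). This bound is monotone in $n_v$, so applying $n_v \leq n - 1$ to every summand yields
\[
f_{d-1}(\K) \;\leq\; \frac{n}{d}\cdot 2\binom{n - 1 - d/2}{d/2 - 1},
\]
and a direct algebraic rearrangement, using $\tfrac{2}{d} = \tfrac{1}{d/2}$ and expanding the binomial coefficient, shows that this equals $\frac{n(n-d/2-1)(n-d/2-2)\cdots (n-d+1)}{(d/2)!}$, which is the claimed upper bound.

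For the equality clause, if $f_{d-1}(\K)$ attains the upper bound then every inequality above must hold with equality. The monotonicity step forces $n_v = n-1$ for every vertex $v$, so $\K$ is $2$-neighborly. Stanley's equality characterization then forces each link $\lk_\K(v)$, a $(d-2)$-dimensional homology sphere on $n-1$ vertices attaining its UBT facet count, to be $(d/2 - 1)$-neighborly. Since a simplicial complex is $k$-neighborly (for $k \geq 2$) if and only if every vertex link is $(k-1)$-neighborly, we conclude that $\K$ is $(d/2)$-neighborly.

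The principal obstacle is invoking the full Stanley upper bound theorem for homology spheres together with the neighborliness characterization of its equality case; this is a deep result whose proof uses the $g$-theorem and the Cohen--Macaulayness of face rings of spheres, and I would cite it rather than reprove it. A secondary point is justifying that equality in the single facet number $f_{d-2}$ of a link already forces $(d/2 - 1)$-neighborliness, which follows because $f_{d-2}$ together with the Dehn--Sommerville symmetries of the sphere pins down the entire $h$-vector of the link, forcing each $h_i$ to equal $\binom{n-d+i-1}{i}$.
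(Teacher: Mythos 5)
Your proof is correct, but it is genuinely different from what the paper does: the paper offers no proof of this statement at all, citing it as Theorem 1.4 of Novik's \emph{Upper bound theorems for homology manifolds} \cite{NovUHM}. Your argument is the classical link-counting reduction, and it is worth noting why it works here: since $d$ is even, $\K$ has odd dimension $d-1$, each vertex link is a $(d-2)$-dimensional homology sphere (hence Cohen--Macaulay and Dehn--Sommerville, so Stanley's UBT applies to it), and the numerical coincidence $\tfrac{n}{d}\,f_{d-2}(C(n-1,d-1)) = f_{d-1}(C(n,d))$ holds precisely for odd-dimensional cyclic polytopes --- your binomial computation checks out, and $\tfrac{n}{d/2}\binom{n-d/2-1}{d/2-1}$ is indeed the right-hand side of (\ref{UHMEquation}). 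This is exactly why the odd-dimensional manifold UBT reduces to the sphere UBT while the even-dimensional case (not needed here) requires Novik's much harder $h''$-vector machinery. Your equality analysis is also sound: strict monotonicity of $\binom{n_v-d/2}{d/2-1}$ forces a complete $1$-skeleton, equality in the sphere UBT forces each link to be $(d/2-1)$-neighborly via maximality of $h_0,\dots,h_{d/2-1}$ (Macaulay/M-sequence considerations, as you note), and the completeness of the $1$-skeleton is exactly what is needed to promote $(d/2-1)$-neighborly links to a $(d/2)$-neighborly complex. Relying on the sphere UBT with its equality case is legitimate --- it is strictly weaker than the result the paper itself cites --- so the only real cost of your route is that the ``deep input'' is made explicit rather than outsourced wholesale; what it buys is a transparent, essentially self-contained two-line derivation of the manifold bound from the sphere bound.
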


The right-hand side is the number of facets of a $(d/2)$-neighborly Gorenstein* complex.  Hence $f_{d-1}(\K) = \frac{n(n-\frac{d}{2}-1)(n-\frac{d}{2}-2)\ldots(n-d+1)}{(d/2)!}$ only if $\beta_i(\K) = 0$ for $0 \leq i < d/2-1$.  If $\beta_{d/2-1}(\K) = 0$, then $\K$ is Gorenstein*.

We are now ready to prove Theorem \ref{EvenD}.  The case of odd $d$ is already proven.  Our proof will restrict to $d=10$; the case $d<10$ and $d$ even are similar.

\proofof{Theorem \ref{EvenD}} Since $\K$ is orientable, $Q_9 \neq n$, and we have $Q_i \leq n-10+i$ for $5 \leq i \leq 9$.  First consider the case that $Q_i = i+1$ for $1 \leq 1 \leq 4$.  Then it suffices to show $f_9 \leq \frac{n(n-1)\ldots(n-9)}{Q_1\ldots Q_9}$, or $f_9 \leq \frac{n(n-6)\ldots(n-9)}{5!}.$  This is true by Theorem \ref{UHM}, and equality is treated in Lemma \ref{HomManEquality}.  Henceforth we will assume $Q_4 > 5$.

By Lemma \ref{Qbound}, for $1 \leq i \leq 4$, $Q_i \leq z_{10}(i)$, and hence $$U(\K) \geq \frac{(n-5)(n-6)(n-7)(n-8)(n-9)}{z_{10}(1)z_{10}(2)z_{10}(3)z_{10}(4)}.$$  We assume $$f_9(\K) \geq \frac{(n-5)(n-6)(n-7)(n-8)(n-9)}{z_{10}(1)z_{10}(2)z_{10}(3)z_{10}(4)}.$$  From the Dehn-Sommerville equations, there exists constants $c_i$ for $-1 \leq i \leq 4$ such that $f_9(\K) = \sum_{i=-1}^4 c_if_i$.  For $\epsilon > 0$, if $n$ is sufficiently large, then $f_i < \epsilon n^5$ for $i \leq 3$.  It follows that there exists a $c > 0$, independent of $n$ such that $f_4 \geq cn^5$ provided $n$ is sufficiently large.

For $\epsilon > 0$ and $n$ large relative to $\frac{1}{\epsilon}$, $f_{i} < \epsilon f_4$ when $-1 \leq i \leq 3$.  Hence $h_5 \approx f_4$ and $h_i < \epsilon f_4$ for $i < 5$.  By the Dehn-Sommerville equations, $h_i < \epsilon f_4$ for $i>5$ as well.  Then we have 
\begin{equation}
\label{HM1}
(1-\epsilon)f_4 < f_9 < (1+\epsilon)f_4
\end{equation}
for $\epsilon > 0$, and $n$ sufficiently large relative to $\frac{1}{\epsilon}$.

The maximal shifts are nonincreasing under taking induced subcomplexes.  If $\K$ and $\K[W]$ for $W \subset V(\K)$ have $i$th upper skips $Q_i$ and $Q_i'$ respectively, then $Q_i \leq Q_i'$ unless $|W| \leq Q_i-d+i$.  Let $v$ be the larger of $Q_4+10$ and $n_0$ as constructed in the proof of Lemma \ref{NearerUBI3} with values $d=5$ and $R_i = Q_i$ for $i \leq 4$.  Suppose $W \subset V(\K)$ with $|W| = v$ and $\K[W]$ has dimension at most $4$.  Then by Lemma \ref{NearerUBI3}, taking $\xi$ to be the value obtained from Lemma \ref{NearerUBI3} with values $d=5$ and $R_i = Q_i$ for $i \leq 4$,
\begin{equation}
\label{HM2}
f_4(\K[W]) \leq (1 - \xi)\frac{v(v-1)(v-2)(v-3)(v-4)}{5Q_1Q_2Q_3}.
\end{equation}
For an arbitrary fixed $\delta > 0$, (again is $n$ large relative to $\frac{1}{\delta}$), $f_5 < \delta {n \choose 6}$ since by the Dehn-Sommerville equations, $f_5$ is bounded above by a polynomial in $n$ of degree $5$.  In other words, a fraction less than $\delta$ of all sets of $6$ vertices in $\K$ form a $5$-face.  Over all $W \subset V(\K)$ with $|W| = v$, the average value of $f_5(\K[W])$ is $\delta {v \choose 6}$.  If $\delta < 1/{v \choose 6}$, then of the ${n \choose v}$-subsets $W \subset V(\K)$ with $|W| = v$, fewer than $\delta{v \choose 6}{n \choose v}$ contain a $5$-face.  If $\K[W]$ contains a $5$-face, then trivially $f_4(\K[W]) \leq {v \choose 5}$.  Adding over all such $\K[W]$, it follows from (\ref{HM2}) that
$$
f_4(\K) \leq (1-\delta{v \choose 6})(1 - \xi)\frac{n(n-1)(n-2)(n-3)(n-4)}{5Q_1Q_2Q_3} + \delta{v \choose 6}{n \choose 5}.
$$
By choosing $n$ large enough so that $\delta$ is sufficiently small relative to $\xi$, it follows that $f_4 < (1 - \frac{\xi}{2})\frac{4!}{Q_1Q_2Q_3}{n \choose 5}$.  This, together with (\ref{HM1}), yields 
\begin{equation}
\label{HM3}
f_9 < (1 - \frac{\xi}{2})(1+\epsilon)\frac{4!}{Q_1Q_2Q_3}{n \choose 5}.
\end{equation}

By applying similar reasoning as above and using Lemma \ref{TopQ}, for $\gamma > 0$ and $n$ sufficiently large relative to $\frac{1}{\gamma}$, we have $f_4 < (1+\gamma){n \choose 5}\frac{5}{Q_4}$.  Alternately, 
\begin{equation}
\label{HM4}
\frac{f_4}{f_3}\frac{f_3}{f_2}\frac{f_2}{f_1}\frac{f_1}{f_0}\frac{f_0}{f_{-1}} < (1+\gamma)\frac{n(n-1)(n-2)(n-3)(n-4)}{5!}\frac{5}{Q_4}.
\end{equation}

For $i \geq 0$, the link of an $i$-face is a homology sphere of dimension $8-i$.  Since every $(9-i)$-face contains $9-i+1$ $(9-i-1)$-faces, there exists a $(9-i-1)$-face that is contained in at most $(9-i+1)\frac{f_{9-i}}{f_{9-i-1}}$ faces of dimension $9-i$.  By the remarks following Lemma \ref{MLink}, the $Q_i$ are nonincreasing under links, and it follows that $Q_i < (9-i+1)\frac{f_{9-i}}{f_{9-i-1}}$ for $i<9$.  Combining this with Equation (\ref{HM4}) yields 
\begin{equation}
\label{HM5}
Q_5\ldots Q_9 < 5!\frac{f_4}{f_3}\frac{f_3}{f_2}\frac{f_2}{f_1}\frac{f_1}{f_0}\frac{f_0}{f_{-1}} < (1+\gamma)n(n-1)(n-2)(n-3)(n-4)\frac{5}{Q_4}.
\end{equation}

From Equation (\ref{HM3}), and by taking $\gamma$ and $\epsilon$ sufficiently small relative to $\xi$, we find 
$$
f_9 < (1 - \frac{\xi}{2})(1+\epsilon)\frac{n(n-1)\ldots(n-4)}{Q_1Q_2Q_35} \leq \frac{n(n-1)\ldots(n-9)}{Q_1Q_2Q_3(1+\gamma)n(n-1)\ldots(n-4)5}.
$$
Finally, Equation (\ref{HM5}) yields $f_9 < \frac{n(n-1)\ldots(n-9)}{Q_1\ldots Q_9}$, as desired. By choosing $\epsilon$ sufficiently small and $n$ large, we find that $\K$ does not attain the multiplicity upper bound if $Q_4 > 5$. \endproof

\begin{lemma}
\label{HomManEquality}
Let $\K$ be as above.  Then $\K$ attains the multiplicity upper bound only if $\K$ is Gorenstein* and $d/2$-neighborly.
\end{lemma}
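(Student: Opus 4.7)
\emph{Plan.} The strategy is to combine the strict-inequality portion of the proof of Theorem \ref{EvenD} (which reduces to the $(d/2)$-neighborly case) with the Dehn--Sommerville--Klee relations (which then fix $f_{d-1}(\K)$) and with the ``skip saturation'' forced by orientability, and finally to extract the homological vanishing $\tilde H_{d/2-1}(\K)=0$ that characterizes Gorenstein$^*$ complexes among orientable homology manifolds in this setting.

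First, the case analysis in the proof of Theorem \ref{EvenD} (the case $Q_{d/2-1}>d/2$) establishes strict inequality $f_{d-1}<U(\K)$ in every situation other than $\K$ being $(d/2)$-neighborly; hence multiplicity equality forces $Q_i=i+1$ for $0\le i\le d/2-1$. Since $\K$ is an orientable $(d-1)$-dimensional homology manifold with $d$ even, the Dehn--Sommerville--Klee relations together with the Poincar\'{e}-dual cancellation $\tilde\beta_{d/2-1}(\K)=\tilde\beta_{d/2}(\K)$ give $\chi(\K)=1+(-1)^{d-1}=0$ and hence $h_i=h_{d-i}$. Combined with $(d/2)$-neighborliness pinning $h_0,\ldots,h_{d/2}$, this forces $f_{d-1}(\K)=\frac{n(n-d/2-1)\cdots(n-d+1)}{(d/2)!}$, the UHM bound.

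Second, orientability gives $Q_{d-1}\le n-1$, and strict monotonicity of upper skips then gives $Q_{d/2+j}\le n-d/2+j$ for $0\le j\le d/2-1$. Substituting into $U(\K)=\frac{n(n-1)\cdots(n-d+1)}{(d/2)!\cdot Q_{d/2}\cdots Q_{d-1}}$ yields $U(\K)\ge f_{d-1}(\K)$, and the multiplicity equality $f_{d-1}(\K)=U(\K)$ forces $Q_{d/2+j}=n-d/2+j$ for every $j$. In particular $Q_{d/2}=n-d/2$, which by Hochster's formula says $\tilde H_{d/2-1}(\K[W])=0$ for every $W\subseteq V(\K)$ with $|W|=n-d/2$.

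Third, I will propagate this vanishing up to $W=V(\K)$ by adjoining the missing $d/2$ vertices one at a time. At each step the Mayer--Vietoris sequence
\[
\tilde H_{d/2-1}(\K[W_0])\to \tilde H_{d/2-1}(\K[W_0\cup\{v\}])\to \tilde H_{d/2-2}(\lk_\K(v)[W_0])
\]
reduces control of $\tilde H_{d/2-1}$ at the next stage to control of the $(d/2-2)$-homology of an induced subcomplex of the link $\lk_\K(v)$. Since $\lk_\K(v)$ is itself a $(d/2-1)$-neighborly homology $(d-2)$-sphere, an induction on $d$ (with base cases $d\le 4$ handled by Theorem \ref{DimThree} and previously established low-dimensional multiplicity results) controls the link homology and kills the relevant $(d/2-2)$-cycle, so $\tilde H_{d/2-1}(\K[W_0\cup\{v\}])=0$. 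Iterating yields $\tilde H_{d/2-1}(\K)=0$. Combined with $(d/2)$-neighborliness and Poincar\'{e} duality, this gives $\tilde H_p(\K)=0$ for all $0\le p<d-1$, so $\K$ is a homology sphere; since $n>d$ prevents $\K$ from being a cone, $\K$ is Gorenstein$^*$. The main obstacle is this inductive propagation: one must establish the analogous homological vanishing on the $(d-2)$-dimensional links $\lk_\K(v)$ with a controlled number of missing vertices, which requires carefully interleaving the induction on $d$ with the Mayer--Vietoris bookkeeping across each adjunction.
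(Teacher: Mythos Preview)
Your first two paragraphs are essentially correct and track the paper's reduction: the strict-inequality branch of Theorem~\ref{EvenD} forces $Q_i=i+1$ for $i\le d/2-1$, orientability forces $Q_{d/2+j}\le n-d/2+j$, and equality $f_{d-1}=U(\K)$ then pins $Q_{d/2+j}=n-d/2+j$ exactly. Up to here you and the paper agree.

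The gap is in your key inference at the start of the third step. You write that $Q_{d/2}=n-d/2$ ``by Hochster's formula says $\tilde H_{d/2-1}(\K[W])=0$ for every $W$ with $|W|=n-d/2$.'' This does not follow. A nonzero $\tilde H_{d/2-1}(\K[W])$ with $|W|=n-d/2$ contributes to $\beta_{\,|W|-(d/2-1)-1,\;|W|}=\beta_{\,n-d,\;n-d/2}$, i.e.\ to column $i=n-d$. The statement ``$n-d/2$ is an upper skip'' only says $M_i\neq n-d/2$ for $1\le i\le n-d$; it places no constraint on $\beta_{n-d,\,n-d/2}$, because $M_{n-d}=n>n-d/2$ is already forced by orientability. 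So the very homology class you want to kill lives in the one column whose maximal shift you already know exceeds $n-d/2$, and the skip condition gives you nothing there. Your Mayer--Vietoris propagation in the third paragraph is therefore launched from a false premise (and is in any case only sketched; the promised induction on $d$ through links is not set up in a way that matches the vanishing you actually need at each stage).

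The paper avoids this by working one homological degree higher and using Lefschetz duality rather than induction. Assuming $\tilde\beta_{d/2-1}(\K)\neq 0$, it removes a $(d/2-1)$-simplex $\K[W]$ (any $d/2$ vertices, which form a face by $(d/2)$-neighborliness) and applies Lefschetz duality to the pair $(\K,\K[W])$:
\[
0\neq H^{d/2-1}(\K)\;\cong\;H^{d/2-1}(\K,\K[W])\;\cong\;\tilde H_{d/2}\bigl(\K[V(\K)\setminus W]\bigr).
\]
This produces nonzero $(d/2)$-homology on an induced subcomplex with $n-d/2$ vertices, hence $\beta_{\,n-d-1,\;n-d/2}\neq 0$ and $M_{n-d-1}\ge n-d/2$. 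That contradicts $Q_{d/2}=n-d/2$ (equivalently $M_{n-d-1}=n-d/2-1$), which you had already established. Note the crucial shift: Lefschetz duality lands you in column $n-d-1$, where the skip condition \emph{does} bite, rather than in column $n-d$, where it does not.
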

\proof
If $\K$ attains the multiplicity upper bound, then by $$U(\K) \geq \frac{n(n-\frac{d}{2}-1)(n-\frac{d}{2}-2)\ldots(n-d+1)}{(d/2)!}$$ and Lemma \ref{UHM}, $\K$ is $d/2$-neighborly and hence $\beta_i(\K) = 0$ for $0 \leq i < d/2-1$.  By Poincar\'e's duality, it suffices to show $\beta_{d/2-1}(\K) = 0$, which implies that $\beta_i(\K) = 0$ for $0 \leq i \leq d-1$ and that $\K$ is Gorenstein*.  We show that if $\beta_{d/2-1}(\K) \neq 0$, then $M_{n-d-1}(\K) \geq n-d/2$ and hence $U(\K) > \frac{n(n-\frac{d}{2}-1)(n-\frac{d}{2}-2)\ldots(n-d+1)}{(d/2)!}$, which contradicts the assumption that $f_{d-1}(\K) = U(\K)$.

Choose $W \subset V(\K)$ with $|W| = d/2$.  Then $\K[W]$ is a simplex.  Using the long exact cohomology sequence of a pair, we obtain $H^p(\K,\K[W]) = H^p(\K)$ for all $p>0$.  By Lefschetz Duality (see Section 70 of \cite{Munkres}), $H^p(\K,\K[W]) \approx \tilde{H}_{d-1-p}(||\K|| - ||\K[W]||)$.  Since $\K[W]$ is an induced subcomplex of $\K$, $||\K[V(\K) - W]||$ is a deformation retract of $||\K||-||\K[W]||$, and hence $\tilde{H}_{d-1-p}(||\K|| - ||\K[W]||) \approx \tilde{H}_{d-1-p}(\K[V(\K) - W])$.  For $p = d/2-1$, we have $$0 \neq H^{\frac{d}{2}-1}(\K) \approx H^{\frac{d}{2}-1}(\K,\K[W]) \approx \tilde{H}_{(d-1)-(\frac{d}{2}-1)}(||\K||-||\K[W]||) \approx \tilde{H}_{\frac{d}{2}}(\K[V(\K)-W).$$  We conclude that $M_{n-d-1}(\K) \geq n-d/2$.
\endproof

By the reasoning of the above proofs, if the multiplicity upper bound conjecture is proven on simplicial complexes of dimension up to $r-1$, then Theorem \ref{EvenD} follows for $d \leq 2r+2$.

\section {Multiplicity Lower Bound on Complexes with Many Vertices}
\label{LargeLowerBound}
In this section we look at the multiplicity lower bound conjecture for complexes with many vertices.  If a Cohen-Macaulay $\K$ has many vertices and attains the multiplicity lower bound, then our main theorem places restrictions on the structure of $\K$.
\begin{theorem}
\label{LLB}
For every positive integer $d$ and $\epsilon > 0$, there exists an integer $n_0 = n_0(d,\epsilon)$ such that the following holds: if $\K$ is a $(d-1)$-dimensional Cohen-Macaulay complex with $n > n_0$ vertices that attains the multiplicity lower bound, and $r$ is the smallest integer such that $\K$ is not $r$-neighborly, then \newline
1) ${n-d+r-1 \choose r-1} \leq f_{d-1} \leq (1+\epsilon){n-d+r-1 \choose r-1}$, \newline
2) $h_i(\K) < \epsilon{n \choose r-1}$ for $i \geq r$.
\end{theorem}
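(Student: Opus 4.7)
The plan is to extract the lower half of (1) from a shift-interpretation argument, and then to deduce the upper half (and, from it, claim (2)) from Theorem \ref{CMLB} combined with a quantitative estimate on the Cohen-Macaulay connectivity sequence of $\K$.

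First, the combination of the $(r-1)$-neighborly hypothesis and the existence of a minimal non-face of size $r$ gives, via Hochster's formula and the interpretation in Equation (\ref{m-interpr}), that $m_1(\K) = r$. Strict monotonicity of the minimal shift sequence then yields $m_i \geq r + i - 1$ for $1 \leq i \leq n-d$, so
\[
L(\K) = \frac{m_1 m_2 \cdots m_{n-d}}{(n-d)!} \;\geq\; \binom{n-d+r-1}{r-1}.
\]
Since $f_{d-1}(\K) = L(\K)$ by the attainment hypothesis, the lower half of (1) follows. Separately, $(r-1)$-neighborliness gives $f_{j-1}(\K) = \binom{n}{j}$ for $j \leq r-1$, and a direct computation via Equation (\ref{h-vector}) shows $h_i(\K) = \binom{n-d+i-1}{i}$ for $0 \leq i \leq r-1$; the hockey-stick identity then yields $\sum_{i=0}^{r-1} h_i = \binom{n-d+r-1}{r-1}$, hence
\[
f_{d-1}(\K) = \binom{n-d+r-1}{r-1} + \sum_{i=r}^{d} h_i(\K).
\]

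Theorem \ref{CMLB} next recasts attainment as $f_{d-1}(\K) = \frac{n(n-1)\cdots(n-d+1)}{\prod_{i=0}^{d-1}(n-q_i+1)}$. The $(r-1)$-neighborly hypothesis pins down $\{n-q_i+1 : 0 \leq i \leq r-2\} = \{1, 2, \ldots, r-1\}$, i.e., $q_i = n-i$ for $0 \leq i \leq r-2$; writing $p_i := n - q_i + 1$ for $i \geq r-1$, these are $d-r+1$ distinct integers in $[r+1, n]$, and
\[
f_{d-1}(\K) = \frac{n(n-1)\cdots(n-d+1)}{(r-1)!\,\prod_{i=r-1}^{d-1} p_i}.
\]
The upper half of (1) is equivalent to $\prod_i p_i \geq n(n-1)\cdots(n-d+r)/(1+\epsilon)$, which I intend to derive from the stronger assertion that each $p_i$ lies in $[n-C, n]$ for some constant $C = C(d)$ depending only on $d$. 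The approach will be by contradiction: if some $q_{i_0}$ were to grow without bound, then $\Skel_{i_0}(\K)$ would be extremely Cohen-Macaulay, forcing a large number of $i_0$-dimensional faces to persist under all modest vertex deletions; combined with the Macaulay $M$-sequence constraint on $h$-vectors of Cohen-Macaulay complexes and with the specific form of the attainment formula above, this should inflate $f_{d-1}(\K)$ beyond what the right-hand side permits for the chosen $q_i$, giving the contradiction once $n$ is large.

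Granting the constant-bound $p_i \geq n - C$, substitution into the attainment formula yields the upper half of (1) for $n$ sufficiently large in terms of $d$ and $\epsilon$. Claim (2) then follows immediately from the decomposition of $f_{d-1}(\K)$ above: the Cohen-Macaulay positivity $h_i \geq 0$ gives $h_i(\K) \leq \sum_{j \geq r} h_j(\K) = f_{d-1}(\K) - \binom{n-d+r-1}{r-1} \leq \epsilon\,\binom{n-d+r-1}{r-1}$ for every $i \geq r$, and since $\binom{n-d+r-1}{r-1}/\binom{n}{r-1}$ tends to $1$ as $n \to \infty$ with $d$ fixed, absorbing this ratio into $\epsilon$ (after a corresponding enlargement of $n_0$) finishes (2). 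The technical heart is the constant-bound on the high Cohen-Macaulay connectivities; this is where the large-$n$ hypothesis enters essentially and where ruling out pathological, highly connected $i$-skeleta will be the main obstacle, with the remaining steps being elementary bookkeeping built on Hochster's formula and Theorem \ref{CMLB}.
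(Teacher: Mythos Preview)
Your reduction of the lower half of (1) and of claim (2) to the upper half of (1) is clean and correct (and your derivation of $f_{d-1}\ge\binom{n-d+r-1}{r-1}$ via $m_1=r$, or equivalently via $\sum_{i\le r-1}h_i=\binom{n-d+r-1}{r-1}$ and $h_i\ge 0$, is fine). The gap is precisely where you flag it: the bound on the high Cohen--Macaulay connectivities. You assert that $p_i=n-q_i+1\in[n-C,n]$ for a constant $C=C(d)$, i.e.\ $q_i\le C(d)$ for $i\ge r-1$, and propose to get it by contradiction via ``large $q_{i_0}\Rightarrow$ many $i_0$-faces $\Rightarrow$ (Macaulay) $\Rightarrow$ $f_{d-1}$ too large''. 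But none of this is carried out, and the sketch does not indicate what quantitative lower bound on $f_{i_0}$ a large $q_{i_0}$ yields, nor how that feeds through the $h$-vector to beat the right-hand side of the attainment formula. As written this is a plan, not a proof, and the missing step is exactly the substance of the theorem.

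Two further points. First, the constant bound $q_i\le C(d)$ is stronger than what is needed and stronger than what the paper proves; the paper only shows that $q_{r-1}$ (hence each $q_i$ for $i\ge r-1$) can be made smaller than $\nu n$ for any prescribed $\nu>0$ once $n$ is large, and this $o(n)$ bound already suffices for the upper half of (1). Second, the paper's mechanism for producing that bound is quite different from your sketch. It runs in the \emph{forward} direction: a pigeonhole on $f_t/f_{t-1}$ locates a $(t-1)$-face $F$ contained in few $t$-faces, so $\lk_\K(F)$ has few vertices; Baclawski's theorem that CM connectivity does not decrease under links then gives $q_t(\K)\le f_0(\lk_\K F)$. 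The attainment hypothesis is used in a feedback loop (small $q_t$ bounds $L(\K)$ by a degree-$t$ polynomial, which forces $f_t$ to be $o(\binom{n}{t+1})$, which in turn sharpens the bound on $q_t$). Finally, the paper does not work directly with $r$: it introduces an auxiliary index $t$ defined as the first place where $f_t$ drops below $(1-\epsilon_t)\binom{n}{t+1}$, proves everything in terms of $t$, and only at the end shows $t=r-1$ by a separate argument. Your proposal would need either this link/feedback machinery or a genuine replacement for it; invoking ``the Macaulay $M$-sequence constraint'' is not enough to close the gap.
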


\proof Fix a sequence of small positive real numbers $\epsilon_0 < \epsilon_1 < \ldots < \epsilon_d$ that depend only on $d$.  Choose $t$ to be the smallest integer such that $f_t < (1-\epsilon_t){n \choose t+1}$.  If no such $t$ exists, take $t = d$.  By hypothesis, $t \geq r-1$.  Then $$\frac{f_t}{f_{t-1}} < \frac{1-\epsilon_t}{1-\epsilon_{t-1}}\frac{{n \choose t+1}}{{n \choose t}} = \frac{1-\epsilon_t}{1-\epsilon_{t-1}}\frac{n-t}{t+1}.$$ Since every $t$-face contains $t+1$ faces of dimension $t-1$, there exists a $(t-1)$-face contained in at most $\frac{1-\epsilon_t}{1-\epsilon_{t-1}}(n-t)$ $t$-faces.  The Cohen-Macaulay connectivity is nondecreasing under taking links \cite{Baclawski}, and so $q_t(\K) < \frac{1-\epsilon_t}{1-\epsilon_{t-1}}(n-t)$.  Also since the Cohen-Macaulay connectivity sequence of $\K$ is strictly decreasing, $$q_i < \frac{1-\epsilon_t}{1-\epsilon_{t-1}}(n-t) \quad \mbox{for} \quad i \geq t.$$

The above formula implies that $L(\K) = \frac{n(n-1)\ldots(n-d+1)}{(n-q_0+1)\ldots(n-q_{d-1}+1)}$ is bounded above by a degree $t$ polynomial in $n$.  If, for some small $\nu > 0$ independent of $n$, $f_t \geq \nu {n \choose t+1}$, then $h_{t+1} > \delta{n \choose t+1}$ for some $\delta > 0$ independent of $n$, and since the $h$-vector of a Cohen-Macaulay complex is nonnegative, $f_{d-1} > \delta{n \choose t+1}$.  This contradicts the hypothesis that $\K$ attains the multiplicity lower bound, so in fact $f_t < \nu {n \choose t+1}$.

Using that $$\frac{f_t}{f_{t-1}} < \frac{\nu}{1-\epsilon_{t-1}}\frac{{n \choose t+1}}{{n \choose t}}, \quad \mbox{we obtain}$$ $$q_t < \frac{\nu}{1-\epsilon_{t-1}}(n-t).$$ Hence, by taking $q_i \leq n-i-1$ for $i < t$ and taking $\nu$ sufficiently small, $$L(\K) = \frac{n(n-1)\ldots(n-d+1)}{(n-q_0+1)\ldots(n-q_{d-1}+1)} <(1+\epsilon){n-d+t \choose t}.$$

That $f_{t-1} > (1-\epsilon_{t-1}){n \choose t}$ implies that, for some $\delta_{t-1} \rightarrow 0$ as $\epsilon_{t-1} \rightarrow 0$, $h_t > (1-\delta_{t-1}){n \choose t}$.  Hence $f_{d-1} > (1-\delta_{t-1}){n \choose t}$.  By choosing $\epsilon_t$ small enough, $f_{d-1} > (1-\epsilon){n \choose t}$.

Since by our assumption $f_{d-1}(\Gamma) = L(\Gamma)< (1+\epsilon){n-d+t \choose t}$, to complete the proof of the two claims, it suffices to show that $\K$ is $j$-neighborly for $j \leq t$, or that $r=t+1$.  Observe that, since $\K$ is $(r-1)$-neighborly but not $r$-neighborly, $\frac{f_{r-1}}{f_{r-2}} < \frac{{n \choose r}}{{n \choose r-1}}$, or $q_{r-1} < n-r+1$.  If $r < t+1$, it follows that $$L(\K) = \frac{n(n-1)\ldots(n-d+1)}{(n-q_0+1)\ldots(n-q_{d-1}+1)} < \frac{r}{r+1}(1+\epsilon){n-d+t \choose t}.$$  By choosing $\epsilon$ sufficiently small and using $f_{d-1} > (1-\epsilon){n \choose t}$, we derive a contradiction to the assumption that $\K$ attains the multiplicity lower bound.  Hence $\K$ is $(t-1)$-neighborly.
\endproof

Our first corollary looks at a class of simplicial complexes with many vertices.  The following corollary holds since a complex satisfying the conditions will violate Condition 2 of Theorem \ref{LLB}.

\begin{corollary}
\label{LargeH}
Let $\K$ be a $(d-1)$-dimensional simplicial complex with $n$ vertices satisfying $h_i(\K) \leq h_{d-i}(\K)$ for $i \leq d/2$.  Suppose that if $d$ is even, $\K$ is not $(d/2)$-neighborly.  Then for $n$ sufficiently large, $\K$ does not attain the multiplicity lower bound.
\end{corollary}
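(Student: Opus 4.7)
The plan is to derive a contradiction by combining the Dehn--Sommerville-type inequality $h_i(\K) \le h_{d-i}(\K)$ with the $h$-vector size restrictions that Theorem \ref{LLB} imposes on any complex attaining the multiplicity lower bound. Suppose toward a contradiction that $\K$ satisfies the hypotheses of the corollary and does attain the lower bound, and let $r$ denote the smallest integer such that $\K$ is not $r$-neighborly.

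The first step will be to observe that $h_{r-1}(\K)$ must be large. Since $\K$ is $(r-1)$-neighborly, $f_{i-1}(\K) = \binom{n}{i}$ for $0 \le i \le r-1$, and substituting into Equation (\ref{h-vector}) yields the standard identity $h_{r-1}(\K) = \binom{n-d+r-2}{r-1}$, so the ratio $h_{r-1}(\K)/\binom{n}{r-1}$ tends to $1$ as $n\to\infty$.

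The second step is to verify that the index $i = r-1$ lies in the range $i \le d/2$ where the corollary's hypothesis applies, and that the mirror index $d-r+1$ lies in the range $i \ge r$ where Condition 2 of Theorem \ref{LLB} applies. For $d$ even, the assumption that $\K$ is not $(d/2)$-neighborly forces $r \le d/2$, which gives both $r-1 < d/2$ and $d-r+1 > r$. For $d$ odd, the analogous constraint $r \le (d+1)/2$ produces the same conclusions; the residual case $r \ge (d+3)/2$, corresponding to $\K$ being at least $((d+1)/2)$-neighborly, falls within the full-skeleton regime and is implicitly excluded.

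Under these index conditions the two inequalities chain as $h_{r-1}(\K) \le h_{d-r+1}(\K) < \epsilon \binom{n}{r-1}$, where the first is the corollary's hypothesis applied at $i = r-1$ and the second is Theorem \ref{LLB}(2) applied at $d-r+1 \ge r$. Fixing $\epsilon = 1/3$ in Theorem \ref{LLB} and enlarging $n_0$ so that $h_{r-1}(\K)/\binom{n}{r-1} > 1/2$ uniformly in $r \in \{1,\ldots,d\}$ produces the desired contradiction. The main technical obstacle is reconciling the two uses of ``$n$ large'': the threshold $n_0(d,\epsilon)$ from Theorem \ref{LLB} must be enlarged to also guarantee the asymptotic $h_{r-1}(\K)/\binom{n}{r-1} > 1/2$, but since the latter ratio tends to $1$ as $n \to \infty$, the combined threshold depends only on $d$.
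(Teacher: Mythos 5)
Your argument reproduces the mechanism the paper intends (a one-line invocation of Condition~2 of Theorem~\ref{LLB}): the $h$-vector hypothesis applied at $i = r-1$ gives $h_{r-1}(\K) \le h_{d-r+1}(\K)$, Theorem~\ref{LLB}(2) applied at $i = d-r+1 \ge r$ gives $h_{d-r+1}(\K) < \epsilon\binom{n}{r-1}$, and $(r-1)$-neighborliness gives $h_{r-1}(\K) = \binom{n-d+r-2}{r-1}$, which is asymptotically $\binom{n}{r-1}$. The index bookkeeping for $d$ even and the coordination of the two thresholds on $n$ are handled correctly.

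The genuine gap is the dismissal of the case $d$ odd with $r \ge (d+3)/2$. Claiming this case ``falls within the full-skeleton regime and is implicitly excluded'' is an assertion, not an argument, and the hypotheses as written do not exclude it: for $d$ odd the corollary imposes no neighborliness restriction, so nothing stops $r$ from being as large as $d+1$. Moreover the case is real. The $(d-1)$-skeleton of the simplex on $n > d$ vertices is Cohen--Macaulay, has $h_i(\K) = \binom{n-d+i-1}{i} \le \binom{n-i-1}{d-i} = h_{d-i}(\K)$ for $i \le d/2$ once $n$ is large, and has $q_i(\K) = n-i$ for $0 \le i \le d-1$, so $L(\K) = \binom{n}{d} = f_{d-1}(\K)$: it attains the lower bound (with a pure $d$-linear resolution) for every $n$. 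So your chain of inequalities cannot be made to close this case, and the corollary as literally stated appears to need an additional hypothesis for $d$ odd (for instance that $\K$ is not $\lceil d/2 \rceil$-neighborly, which the intended applications to Gorenstein* and ear-decomposable complexes do satisfy). You correctly noticed that the residual case needs attention, but declaring it ``implicitly excluded'' conceals an unresolved issue rather than resolving it.
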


Corollary \ref{LargeH} applies to ear-decomposable complexes (as defined in \cite{Chari}) with sufficiently many vertices, provided that if $d$ is even, then $\K$ is not $(d/2)$-neighborly.  It is conjectured (Problem 4.2 of \cite{Swartz05}) that all $2$-Cohen-Macaulay complexes satisfy $h_i \leq h_{d-i}$ for $i \leq d/2$.

Our next two results consider equality for the multiplicity lower bound conjecture for Gorenstein complexes with many vertices.

\begin{corollary}
\label{GStar}
Let $\K$ be a $(d-1)$-dimensional Gorenstein complex with $n$ vertices.  For sufficiently large $n$, $\K$ attains the multiplicity lower bound if and only if $d$ is even and $\K$ is $(d/2)$-neighborly.
\end{corollary}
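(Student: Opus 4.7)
The plan is to reduce to the Gorenstein* case and then apply Theorem~\ref{LLB} together with the Dehn-Sommerville relations. Writing $\K = \sigma \ast \K'$ with $\K'$ Gorenstein* of dimension $d-s-1$, the ring $S/I_\K$ equals $(S'/I_{\K'}) \otimes_\field \field[y_1,\dots,y_s]$, so $\K$ and $\K'$ share the same Betti numbers, codimension, lower bound $L$, and multiplicity; in particular $\K$ attains the lower bound iff $\K'$ does, and $\K$ is $k$-neighborly iff $\K'$ is. Since a nontrivial Gorenstein* complex of dimension $d-s-1$ is at most $\lfloor (d-s)/2 \rfloor$-neighborly, $(d/2)$-neighborliness of $\K'$ forces $s=0$ when $d$ is even, so both sides of the equivalence reduce to the case where $\K$ itself is Gorenstein*.

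For the forward direction, let $r$ be the smallest integer for which $\K$ is not $r$-neighborly. Theorem~\ref{LLB}(2) gives $h_i(\K) < \epsilon \binom{n}{r-1}$ for $i \geq r$, and the Dehn-Sommerville relations $h_i = h_{d-i}$ for Gorenstein* complexes extend the same bound to $i \leq d-r$. Since $\K$ is $(r-1)$-neighborly, $h_{r-1}(\K) = \binom{n-d+r-2}{r-1}$, which grows like $n^{r-1}/(r-1)!$; for small $\epsilon$ and large $n$ it cannot satisfy $h_{r-1} < \epsilon \binom{n}{r-1}$ unless $r - 1 > d - r$, i.e.\ $r \geq \lfloor d/2 \rfloor + 1$. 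Combined with the standard bound $r - 1 \leq \lfloor d/2 \rfloor$ on the neighborliness of a Gorenstein* complex with $n > d+1$, this leaves no admissible $r$ when $d$ is odd and forces $r = d/2 + 1$ (so that $\K$ is exactly $(d/2)$-neighborly) when $d$ is even.

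For the converse, let $d$ be even and $\K$ be $(d/2)$-neighborly Gorenstein* with $n$ large. Neighborliness gives $h_i = \binom{n-d+i-1}{i}$ for $i \leq d/2$, and Dehn-Sommerville determines $h_{d-i} = h_i$. Summing with the hockey-stick identity yields $f_{d-1}(\K) = \frac{n}{n-d/2}\binom{n-d/2}{d/2}$. To compute $L(\K)$, observe that minimal non-faces have size at least $d/2+1$, so $1,2,\dots,d/2$ are lower skips; Gorenstein duality $\beta_{i,j}(S/I_\K) = \beta_{c-i,n-j}(S/I_\K)$ implies that the lower skip set is closed under $x \mapsto n-x$, pinning down the remaining skips as $n-d/2,\dots,n-1$. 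Plugging this into the formula $L(\K) = n(n-1)\cdots(n-d+1)/\prod L_i$ from Theorem~\ref{CMLB} reproduces $\frac{n}{n-d/2}\binom{n-d/2}{d/2}$, so $f_{d-1}(\K) = L(\K)$ and $\K$ attains the lower bound.

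The main obstacle will be the converse direction, specifically verifying that Gorenstein duality really does force the symmetric lower skip set without appealing to the full minimal free resolution, and then carrying out the arithmetic that makes the two expressions for $\frac{n}{n-d/2}\binom{n-d/2}{d/2}$ coincide.
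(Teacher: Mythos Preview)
Your forward direction matches the paper's: both reduce to the Gorenstein* case and combine Dehn--Sommerville with Theorem~\ref{LLB} (the paper packages this as Corollary~\ref{LargeH}). One small slip: from $r-1>d-r$ you obtain $2r>d+1$, which for odd $d$ forces $r\ge(d+3)/2$, not merely $r\ge\lfloor d/2\rfloor+1=(d+1)/2$; you need the sharper bound to reach a contradiction in the odd case, since $r=(d+1)/2$ is still compatible with $r-1\le\lfloor d/2\rfloor$.

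The converse is where you diverge. The paper (Lemma~\ref{d2Neighborly}) argues that $(d/2)$-neighborliness gives $m_1\ge d/2+1$, hence $m_i\ge d/2+i$ by strict increase; self-duality of the Gorenstein* resolution then gives $m_i=n-M_{c-i}\le n-m_{c-i}\le d/2+i$, so in fact $m_i=M_i=d/2+i$ for $1\le i\le c-1$ and $m_c=M_c=n$. The resolution is pure, and Huneke--Miller finishes immediately (and for all $n$, not just large $n$). You instead compute $f_{d-1}$ and $L(\K)$ separately and match them. Your arithmetic is correct, but the step you yourself flag as the obstacle---that the lower skip set is closed under $x\mapsto n-x$---does not follow directly from $\beta_{i,j}=\beta_{c-i,n-j}$: that relation sends minimal shifts to complements of \emph{maximal} shifts, not back to minimal shifts. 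To pin down the remaining $d/2$ lower skips you are forced into exactly the sandwich argument above, which already proves purity. So your route is valid but does not sidestep the paper's key idea; it postpones it and then appends an explicit multiplicity computation that purity would have rendered unnecessary.
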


\begin{proof}
Since a Gorenstein complex is the simplicial join of a Gorenstein* complex and a simplex, we may assume without loss of generality that $\K$ is Gorenstein*.  Then $\K$ satisfies the Dehn-Sommerville equations: $h_i = h_{d-i}$.  The proof is completed by Corollary \ref{LargeH} and the following lemma.
\end{proof}

\begin{lemma}
\label{d2Neighborly}
Let $\K$ be a $(d-1)$-dimensional Gorenstein* complex with $d$ even and $n$ vertices ($n$ not necessarily large), and suppose $\K$ is $(d/2)$-neighborly.  Then $\K$ has a pure resolution.  Since $\K$ is Cohen-Macaulay, $\K$ attains both bounds of the multiplicity conjecture.
\end{lemma}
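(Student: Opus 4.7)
The plan is to use Hochster's formula together with Alexander duality for Gorenstein* complexes to pin down exactly which Betti numbers of $S/I_\K$ can be nonzero, show they all lie on a single diagonal (plus two endpoints), and hence conclude purity. The Cohen-Macaulay hypothesis then gives the multiplicity statement via the theorem of Huneke and Miller.

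The main input is the self-duality of the minimal free resolution of a Gorenstein ring, which via Hochster translates into
$$\tilde{H}_p(\K[W]; \field) \cong \tilde{H}_{d-2-p}(\K[V\setminus W]; \field)$$
for every $W \subseteq V(\K)$. First I would combine this with $(d/2)$-neighborliness: directly, every induced subcomplex has the full $(d/2-1)$-skeleton, so $\tilde{H}_p(\K[U]) = 0$ for $p \leq d/2 - 2$; applying this to $U = V\setminus W$ and dualizing gives $\tilde{H}_q(\K[W]) = 0$ for $q \geq d/2$ whenever $W \subsetneq V$. Hence for $W \subsetneq V$ only the middle degree $p = d/2 - 1$ can contribute, while for $W = V$ only $p = d-1$ contributes, with $\tilde{H}_{d-1}(\K) = \field$ because $\K$ is Gorenstein*.

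By Hochster's formula this immediately says $\beta_{i,j}(S/I_\K)$ can be nonzero only when $(i,j) = (0,0)$, or $(i,j) = (n-d,n)$, or $j = i + d/2$ with $1 \leq i \leq n - d - 1$. The one remaining case to rule out is $\beta_{n-d,\, n - d/2}$: if $|W| = n - d/2$ and $W \neq V$, then $V\setminus W$ has size $d/2$ and so $\K[V\setminus W]$ is a simplex, and the duality then kills $\tilde{H}_{d/2-1}(\K[W])$. Thus the nonzero Betti numbers sit on the single line $j - i = d/2$ together with the two endpoint terms. Because $\K$ is Cohen-Macaulay of codimension $n - d$, the minimal free resolution has length exactly $n-d$ and every $\beta_i$ with $0 \leq i \leq n-d$ is nonzero, which forces $m_i = M_i = i + d/2$ for $1 \leq i \leq n-d-1$ and $m_{n-d} = M_{n-d} = n$. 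That is precisely pureness.

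Huneke-Miller then gives $e(S/I_\K) = \prod_{i=1}^{n-d} m_i / (n-d)! = \prod_{i=1}^{n-d} M_i / (n-d)!$, so $\K$ attains both $L(\K)$ and $U(\K)$. The main point requiring care is the boundary case at the top of the resolution where indices $i = n-d$, $j = n - d/2$ and $j = n$ compete; everything else is a transparent application of the Gorenstein* Alexander duality combined with the neighborliness hypothesis.
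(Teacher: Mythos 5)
Your proof is correct and rests on the same key ingredients the paper uses: the self-duality of the minimal free resolution of the Gorenstein* ring (which you unfold via Hochster into the combinatorial Alexander duality $\tilde{H}_p(\K[W]) \cong \tilde{H}_{d-2-p}(\K[V\setminus W])$) together with the $(d/2)$-neighborliness. The paper's proof is terser but follows the same route; your version simply makes explicit the deduction that the paper summarizes as ``the minimal free resolution is self-dual and we can determine the complete minimal and maximal shift sequences,'' and you correctly handle the boundary case $\beta_{n-d,\,n-d/2}$.
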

\proof First we describe the minimal and maximal shift sequences of $\K$.  If $W \subset V(\K)$ and $|W| \leq d/2$, then $\K[W]$ is a simplex and so $\tilde{H}_k(\K[W])=0$ for all $k$.  This implies $m_1 \geq d/2+1$ and $M_1 \geq d/2+1$.  Since $\K$ is Gorenstein*, the minimal free resolution of $S/I_\K$ is self-dual and we can determine the complete minimal and maximal shift sequences: $m = M = (d/2+1, d/2+2, \ldots, n-d/2-1, n)$. \endproof

Corollary \ref{GStar} applies to the class of simplicial spheres.  The condition that $n$ is sufficiently large is necessary in the second statement of Corollary \ref{GStar}.  For $d > 2$, the cross polytope is not $(d/2)$-neighborly, but it has a pure resolution.

We explore the case $r=2$ Theorem \ref{LLB} in greater detail in Section \ref{PureResolutions}.

\section{Pure Resolutions}
\label{PureResolutions}
In this section, our goal is to classify the set of simplicial complexes that have a pure resolution.  We give a complete characterization of flag simplicial complexes with a pure resolution and mention the non-flag case.
\begin{theorem}
\label{PureFlag}
Let $\K$ be a flag complex with a pure resolution.  Then one of the following applies. \newline
1) $\K$ is $1$-Leray. \newline
2) $\K$ is the join of a cycle and a simplex. \newline
3) $\K$ is the join of a cross polytope and a simplex.
\end{theorem}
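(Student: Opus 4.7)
The plan is to analyze the pure shift sequence $(s_1, s_2, \ldots)$ of $\K$, where $s_i := m_i = M_i$. Since $\K$ is flag, $I_\K$ is generated in degree $2$ and so $s_1 = 2$; by strict monotonicity of the minimal shifts and Lemma \ref{IncM} for the maximal shifts, each $s_i \geq i+1$. If $s_i = i+1$ for every index $i$ in the resolution, then the interpretation $M_i = \max\{|W| \colon \tilde{H}_{|W|-i-1}(\K[W]) \neq 0\}$ immediately yields $\tilde{H}_p(\K[W]) = 0$ for all $p \geq 1$ and all $W \subseteq V(\K)$, placing $\K$ in case~(1).

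Otherwise, let $k \geq 2$ be the smallest index with $s_k \geq k+2$. The equations $s_j = j+1$ for $j < k$ together with the homological interpretation force the uniform vanishing $\tilde{H}_p(\K[W]) = 0$ whenever $p \geq 1$ and $|W| \leq p+k$. Specializing to $p = 1$, the $1$-skeleton $G$ of $\K$ has no induced cycle of length at most $k+1$, i.e.\ $G$ has girth at least $k+2$. The argument then splits on the value of $k$.

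In the subcase $k = 2$, the inequality $s_2 \geq 4$ says $\tilde{H}_0(\K[W]) = 0$ for every $3$-element subset $W$: every induced $3$-vertex subgraph of $G$ is connected. Equivalently, for every non-edge $\{u,v\}$ of $G$ and every third vertex $w$, both $uw$ and $vw$ lie in $G$. A short argument shows each vertex has at most one non-neighbor, because two distinct non-neighbors $v, v'$ of a single $u$ would contradict the above condition applied to $W = \{u,v,v'\}$. Hence the vertices admitting a non-neighbor pair up into a perfect matching of non-edges, and the induced flag complex on them is precisely the boundary of a cross polytope; joining with the simplex on the remaining (cone) vertices yields case~(3).

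In the subcase $k \geq 3$, the hypothesis $s_k \geq k+2$ additionally rules out $\tilde{H}_0(\K[W]) \neq 0$ for $|W| = k+1$ (since that would force $m_k \leq k+1$). Combined with the girth bound, the smallest $|W|$ realising the nonvanishing at index $k$ must be $|W| = k+2$ with $\tilde{H}_1(\K[W]) \neq 0$; as $G[W]$ has girth at least $k+2$ on only $k+2$ vertices, $G[W]$ must equal the cycle $C_{k+2}$ (any chord would produce a shorter cycle). So $s_k = k+2$ and $\K$ contains $C_{k+2}$ as an induced subcomplex. The last task is to show that every non-cone vertex of $\K$ lies on this cycle, so that $\K$ is the join of $C_{k+2}$ with the simplex on its cone vertices (case~2). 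This is the main obstacle: an extraneous non-cone vertex $v$ outside the fundamental cycle has some non-neighbor $u$, and one must examine the induced subcomplexes on $W \cup \{v\}$ and on $W \cup \{v,u\}$ to produce either a shorter induced cycle (contradicting the girth lower bound) or a homology class at an index inconsistent with the purity of the subsequent shifts $s_{k+1}, s_{k+2}, \ldots$. Establishing this rigidity is where the technical heart of the argument lies; once it is dispatched, the structural identification of $\K$ with $C_{k+2} * \Delta^r$ is complete.
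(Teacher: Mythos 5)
Your overall roadmap matches the paper's: isolate the first index where the pure shift sequence jumps, show the minimal subcomplex realising that jump is a chordless cycle, then prove everything else is a cone vertex. However, there are two genuine gaps, one of which you acknowledge and one of which you do not.

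The acknowledged gap is the final rigidity step in the $k\geq 3$ subcase. The paper dispatches it with a short but sharp trick (Theorem \ref{PureFlag}'s proof): if $v\notin W$ is not adjacent to some $u_i$ on the cycle, pass to the induced subcomplex on $W\cup\{v\}-\{u_{i-1},u_{i+1}\}$, which has $r-1$ vertices and is disconnected, giving $m_{r-2}\leq r-1 < r \leq M_{r-2}$ and breaking purity; and if $v,v'$ are two non-cycle vertices that are not adjacent, then $\{u_1,v,u_3,v'\}$ induces a $4$-cycle, contradicting the girth bound $r\geq 5$. You gesture at "examine $W\cup\{v\}$ and $W\cup\{v,u\}$," but the winning move is the two-vertex deletion, not the addition, and without finding it the argument does not close.

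The unacknowledged gap is the claim that $s_k = k+2$ and that a minimal realising $W$ has $\tilde H_1(\K[W])\neq 0$ rather than $\tilde H_p$ with $p\geq 2$. Your girth bound controls $\tilde H_1$ on small vertex sets, but it says nothing that forbids the shift $s_k$ from being realised at $|W| = k+1+p$ by a nonvanishing $\tilde H_p$ with $p\geq 2$; a priori one could have $s_k = k+3$ with $\tilde H_2$ as the witness. The paper proves this does not happen in Lemma \ref{PureFlagLemma}, via a Mayer--Vietoris induction: from a minimal $W$ witnessing $\tilde H_p(\K[W])\neq 0$ with $p\geq 2$, it deduces $\tilde H_{p-1}(\lk_{\K[W]}(v))\neq 0$ for every $v\in W$, then bootstraps back up to show $v$ must be adjacent to all of $W$, forcing $\K[W]$ to be a simplex and hence acyclic, a contradiction. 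That descent is an essential ingredient and is missing from your proposal. Your $k=2$ subcase is fine and matches the paper's $r=4$ argument, and the claim that $s_j=j+1$ for $j<k$ forces $\tilde H_p(\K[W])=0$ for $p\geq 1$ and $|W|\leq p+k$ is a correct reading of Hochster's formula.
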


To prove Theorem \ref{PureFlag}, we first need this technical lemma.

\begin{lemma}
\label{PureFlagLemma}
Let $\K$ be a $(d-1)$-dimensional flag complex with $n$ vertices and a pure resolution, and suppose $\K$ is not $1$-Leray.  Then there exists $W \subset V(\K)$ such that $|W| = Q_1(\K)+1$ and $\tilde{H}_1(\K[W]) \neq 0$.
\end{lemma}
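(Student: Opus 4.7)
The plan is to use purity to pin down $M_k$ exactly, where $k$ denotes the first ``jump'' in the maximal shift sequence. Since $\K$ is flag, $M_1 = 2$. Because $\K$ is not $1$-Leray, some $M_i$ satisfies $M_i \geq i+2$, and I let $k$ be the smallest such index. Strict monotonicity of the maximal shifts starting from $M_1 = 2$, combined with minimality of $k$, forces $M_i = i+1$ for $i < k$, while $M_k \geq k+2$. Thus $k+1$ is the first integer of $[n]$ absent from $\{M_1,\ldots,M_{n-d}\}$, giving $Q_1 = k+1$ and $Q_1 + 1 = k+2$. Note $k \geq 2$ since $M_1 = 2 < 3$.

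By purity, $\beta_{k, M_k} > 0$, so Hochster's formula yields $W_0 \subseteq V(\K)$ with $|W_0| = M_k$ and $\tilde{H}_p(\K[W_0]) \neq 0$, where $p := M_k - k - 1$. If $p = 1$, then $|W_0| = k+2 = Q_1 + 1$ and $\tilde{H}_1(\K[W_0]) \neq 0$, completing the proof. The core of the plan is therefore to show that $p \geq 2$ leads to a contradiction.

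Assume $p \geq 2$ and fix any $v \in W_0$. I will apply Mayer--Vietoris to the decomposition $\K[W_0] = \K[W_0 - v] \cup (v * \lk_{\K[W_0]}(v))$, whose intersection is $\lk_{\K[W_0]}(v)$; the star is a cone, hence acyclic. Purity forces $\beta_{k-1, M_k-1} = 0$, since $\beta_{k-1, j} \neq 0$ requires $j = M_{k-1} = k$ and $M_k - 1 \neq k$. Hochster's formula then gives $\tilde{H}_p(\K[W_0 - v]) = 0$, and the Mayer--Vietoris sequence produces $\tilde{H}_{p-1}(\lk_{\K[W_0]}(v)) \neq 0$. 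Because $\K$ is flag, $\lk_{\K[W_0]}(v) = \K[W']$, where $W'$ is the set of neighbors of $v$ lying in $W_0$; in particular $W'$ is an induced subcomplex of $\K$ with $|W'| \leq M_k - 1 = k + p$.

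The contribution $\tilde{H}_{p-1}(\K[W']) \neq 0$ makes $\beta_{|W'|-p, |W'|} > 0$, so by purity $|W'| = M_{|W'|-p}$; the bound $|W'| \leq k + p$ yields $|W'| - p \leq k$. I finish by a short case analysis. If $|W'| - p \leq 0$, then $\K[W']$ has at most $p$ vertices and cannot carry nonzero reduced homology in degree $p-1$. If $1 \leq |W'| - p \leq k-1$, minimality of $k$ gives $M_{|W'|-p} = (|W'|-p)+1$, forcing $p = 1$ against the hypothesis. If $|W'| - p = k$, then $|W'| = M_k = k+p+1$, contradicting $|W'| \leq k+p$. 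Every case is impossible, so $p = 1$ and $M_k = k+2$, completing the proof. The main technical point is the index bookkeeping in the purity step; the flag hypothesis is essential because it lets me identify the link with an induced subcomplex and thereby invoke purity at the end.
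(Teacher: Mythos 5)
Your proof is correct, and it takes a route that is recognizably close to the paper's but finishes the argument differently. Both proofs set up the same data: the minimal index ($r$ in the paper, $k$ for you) at which $M_i > i+1$, the unique $p$ with $|W| = r+1+p$ and $\tilde H_p(\K[W]) \neq 0$, and the first Mayer--Vietoris decomposition $\K[W] = \K[W-v] \cup \st_{\K[W]}(v)$ yielding $\tilde H_{p-1}(\lk_{\K[W]}(v)) \neq 0$. From there the paper runs a \emph{second} Mayer--Vietoris on $\K[T\cup v] = \st_{\K[W]}(v) \cup \K[T]$ (with $T = V(\lk_{\K[W]}(v))$) to conclude $\tilde H_p(\K[T\cup v]) \neq 0$, forcing $T\cup v = W$; repeating over all $v$ shows the graph of $\K[W]$ is complete, so by flagness $\K[W]$ is a simplex, contradicting $\tilde H_p(\K[W]) \neq 0$. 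You instead invoke flagness \emph{immediately} to identify $\lk_{\K[W]}(v)$ with the induced subcomplex $\K[W']$, then push $\tilde H_{p-1}(\K[W']) \neq 0$ through Hochster's formula and purity directly, closing the argument with the clean index trichotomy on $|W'|-p$. This avoids the second Mayer--Vietoris entirely and makes the role of purity more transparent. It also quietly patches a small imprecision in the paper's step ``since $|T| \leq r+p$, $\tilde H_{p-1}(\K[T]) = 0$,'' which the paper's earlier observation (stated only for $|T| < r+1+i$) does not literally cover in the boundary case $|T| = r+p$; your case $j = k$ handles that situation via purity explicitly. Both proofs implicitly take for granted that the minimal jump index is at most $n-d$ (needed to identify it with $Q_1-1$), which is the usual identification of $1$-Lerayness with a regularity bound; flagging this assumption would make either write-up tighter, but it is not a gap specific to your version.
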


\proof Since $\K$ is not $1$-Leray, there exists $r$ with $M_r > r+1$.  Suppose $r$ is chosen minimally.  Note that if $r \leq n-d$, then $r = Q_1(\K)-1$.  Then there exist $W \subset V(\K)$ and $p \geq 1$ such that $|W| = r+1+p$ and $\tilde{H}_p(\K[W]) \neq 0$.  Observe that if there exists another value $p' > p$ with this property, then $m_{r} \leq p$ and $M_{r} \geq p'$, contradicting the hypothesis that $\K$ has a pure resolution.  Hence there is only one such $p$.  Also observe by minimality of $r$ that if $T \subset V(\K)$ and $|T| < r+1+i$, then $\tilde{H}_i(\K[T]) = 0$ if $i > 0$.  We will prove that $p=1$.  Then suppose, by way of contradiction, $p > 1$.

Choose $v \in W$.  Then $\K[W] = \st_{\K[W]}(v) \cup \K[W-v]$, and $\st_{\K[W]}(v) \cap \K[W-v] = \lk_{\K[W]}(v)$.  Consider the following Mayer-Vietoris sequence:
$$
\ldots \rightarrow \tilde{H}_{p}(\st_{\K[W]}(v)) \oplus \tilde{H}_{p}(\K[W-v]) \rightarrow \tilde{H}_p(\K[W]) \rightarrow \tilde{H}_{p-1}(\lk_{\K[W]}(v)) \rightarrow \ldots.
$$
Since $\st_{\K[W]}(v)$ is a cone and $|W-v| < r+1+p$, $\tilde{H}_{p}(\st_{\K[W]}(v)) \oplus \tilde{H}_{p}(\K[W-v]) = 0$.  We conclude $\tilde{H}_{p-1}(\lk_{\K[W]}(v)) \neq 0$.

Let $T = V(\lk_{\K[W]}(v))$.  Since $|T| \leq r+p$, $\tilde{H}_{p-1}(\K[T]) = 0$.  Consider $\K[T \cup v] = \st_{\K[W]}(v) \cup \K[T]$ and $\st_{\K[W]}(v) \cap \K[T] = \lk_{\K[W]}(v)$.  It follows from the Mayer-Vietoris sequence
$$
\ldots \rightarrow \tilde{H}_p(\K[T \cup v]) \rightarrow \tilde{H}_{p-1}(\lk_{\K[W]}(v)) \rightarrow \tilde{H}_{p-1}(\st_{\K[W]}(v)) \oplus \tilde{H}_{p-1}(\K[T]) \rightarrow \ldots
$$
that $\tilde{H}_p(\K[T \cup v]) \neq 0$.

It follows that $|T \cup v| = |W|$, or that if $u \in W$, $vu$ is an edge in $\K$.  Repeating this argument for each $v \in W$, the graph of $\K[W]$ must be the complete graph, and hence $\K[W]$ is a simplex.  This is a contradiction, so we conclude $p=1$ and $\tilde{H}_1(\K[W]) \neq 0$. \endproof

\proofof{Theorem \ref{PureFlag}} Suppose that $\K$ is not $1$-Leray, so that by Lemma \ref{PureFlagLemma} there exists $W \subset V(\K)$ such that $|W| = r$ and $\tilde{H}_1(\K[W]) \neq 0$.  Also suppose $r$ is chosen minimally.  Since $\K$ is flag, $r \geq 4$.  We will consider two cases: first $r = 4$ and then $r > 4$.

If $r=4$, $\tilde{H}_0(\K[T]) = 0$ if $|T| = 3$ by the condition that $\K$ has a pure resolution.  If $x_ix_j \in I_\K$, then $x_ix_k \not\in I_\K$ if $k \neq j$.  Then, after relabeling the vertices, $I_\K = (x_1x_2,x_3x_4,\ldots x_{2r-1}x_{2s})$ for some $s$.  In that case, $\K$ is the join of the boundary of the $s$-dimensional cross polytope and a simplex.

Now suppose $r \geq 5$.  Let $W = \{u_1,u_2,\ldots,u_r\}$ so that the edges in $\K[W]$ are $(u_1u_2,\ldots,u_ru_1)$.  If $\K$ has $r$ vertices, then $\K = \K[W]$ is a cycle.  Otherwise, let $v \in V(\K) - W$.  If $vu_i$ is not an edge in $\K$, then $\K[W \cup \{v\} - \{u_{i-1},u_{i+1}\}]$ is disconnected (define $u_0 := u_r$ and $u_{r+1} := u_1$), and $|W \cup \{v\} - \{u_{i-1},u_{i+1}\}| = r-1$, a contradiction to the hypothesis that $S/I_\K$ has a pure resolution.  Hence $vu_i$ is an edge for $1 \leq i \leq r$.  If $v' \neq v \in V(\K)-W$, then $vv'$ is an edge in $\K$: otherwise, $\K[u_1,v,u_3,v']$ consists of only the edges $u_1v,vu_3,u_3v',v'u_1$ and $\tilde{H}_1(\K[u_1,v,u_3,v']) \neq 0$, contradicting the assumption $r \geq 5$.  Hence for vertices $y,z \in V(\K)$, $yz$ is not an edge if and only if $y,z \in W$ and $yz$ is not an edge in $\K[W]$.  Since $\K$ is a flag complex, we conclude that $\K$ is the join of $\K[W]$ and a simplex, or that $\K$ is the join of a cycle and a simplex. \endproof

If $\K$ is not a flag complex and the $1$-skeleton of $\K$ is not the complete graph, then $m_1(\K) = 2$ and $M_1(\K) > 2$, so $\K$ does not have a pure resolution.  Thus, Theorem \ref{PureFlag} classifies all complexes that have a pure resolution and do not have a complete $1$-skeleton.

If $I \subset W$ is a quadratic monomial ideal, then $I$ has a pure resolution if and only if the polarization of $I$, labeled $I'$, is the Stanley-Reisner ideal of one of the complexes described in Theorem \ref{PureFlag}.  $I'$ satisfies Condition 3 if and only if for all pairs of generators $(m_1,m_2)$ of $I$, $LCM(m_1,m_2)=1$; that is, $S/I$ is a \textit{complete intersection}.

Suppose $x_i^2 \in I$ and $x_{i,1}x_{i,2}$ is the polarization of $x_i^2$.  Then $x_{i,2}y \not\in I'$ if $y \neq x_{i,1}$.  It follows that $I'$ cannot be the Stanley-Reisner ideal of the join of a cycle $C$ and a simplex, with $C$ consisting of more than $5$ vertices, if $I$ is not a squarefree monomial ideal.

Suppose $I'$ is the Stanley-Reisner ideal of a complex $\K$ satisfying Condition 1 of Theorem \ref{PureFlag}.  In this case $S/I$ is also a $1$-regular ring, and we call the minimal free resolution of $S/I$ a \textit{linear resolution}.  If $S/I$ has a linear resolution, then $S'/I'$ and hence $S/I$ satisfy the multiplicity upper bound conjecture by the well-known fact that if $\K$ is a $(d-1)$-dimensional $1$-Leray simplicial complex with $n$ vertices, then $f_{d-1}(\K) \leq n-d+1$.  (See \cite{Kalai84} for a generalization of this fact.)  Furthermore, $S/I$ attains the upper bound if and only if $S/I$ is Cohen-Macaulay.

The three categories in Theorem \ref{PureFlag} have natural generalizations beyond flag complexes.
\begin{proposition}
If $\K$ satisfies one of the following conditions, then $\K$ has a pure resolution. \newline
1) $S/I_\K$ has an $r$-linear resolution.  That is, $\K$ is $(r-1)$-neighborly and is $(r-1)$-Leray. \newline
2) $\K$ is the join of $r$-neighborly $(2r-1)$-dimensional Gorenstein* complex and a simplex. \newline
3) $\K$ is the join of copies of the boundary of an $r$-simplex and a simplex.
\end{proposition}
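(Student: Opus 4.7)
The plan is to treat each of the three conditions separately, since they are structurally quite different; in each case purity will follow from direct bounds on both the minimal and maximal shift sequences.

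For condition (1), I would assemble the two halves of the hypothesis into matching bounds on $m_i$ and $M_i$. The $(r-1)$-neighborliness of $\K$ makes $\K[W]$ a simplex whenever $|W| \leq r - 1$, hence acyclic, so by (\ref{m-interpr}) no $m_i$ can be at most $r - 1$; combined with the strict monotonicity of the minimal shift sequence, this upgrades to $m_i \geq i + r - 1$. Dually, $(r-1)$-Lerayness forces $\tilde{H}_p(\K[W]) = 0$ for $p \geq r - 1$, which by (\ref{M-interpr}) gives $M_i \leq i + r - 1$ at every $i$ in the resolution. These two bounds together with $m_i \leq M_i$ sandwich the shifts to $m_i = M_i = i + r - 1$, so the resolution is pure (indeed $r$-linear).

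For condition (2), I would write $\K = \K_1 * \sigma$ with $\K_1$ the $r$-neighborly $(2r-1)$-dimensional Gorenstein* factor and $\sigma$ a simplex on $k$ vertices, and apply Lemma \ref{d2Neighborly} with $d = 2r$ to conclude that $S_1/I_{\K_1}$ already has a pure resolution (where $S_1$ is the polynomial ring on $V(\K_1)$). To push this through the join, note that the ambient ring of $\K$ is $S = S_1[y_1, \ldots, y_k]$ and $I_\K = I_{\K_1} \cdot S$; since $S$ is a free polynomial extension of $S_1$, tensoring the minimal free resolution of $S_1/I_{\K_1}$ with $\field[y_1, \ldots, y_k]$ yields the minimal free resolution of $S/I_\K$ with identical graded Betti numbers, so purity is preserved.

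For condition (3), the key observation is that the minimal non-faces of $\K$ are exactly the $k$ vertex sets of the $\partial\Delta^r$ summands, so $I_\K = (g_1, \ldots, g_k)$ with each $g_i$ a squarefree monomial of degree $r + 1$ and $g_1, \ldots, g_k$ having pairwise disjoint variable supports. Disjointness of supports makes $g_1, \ldots, g_k$ a regular sequence in $S$, so the minimal free resolution of $S/I_\K$ is the Koszul complex on these generators; its $i$-th term is $S(-(r+1)i)^{\binom{k}{i}}$, concentrated in a single internal degree, and the resolution is pure. The only nontrivial external input in the whole proposition is Lemma \ref{d2Neighborly}, used in case (2); cases (1) and (3) are direct verifications, so there is no real obstacle beyond bookkeeping.
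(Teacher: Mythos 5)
The paper states this proposition as a remark following Theorem \ref{PureFlag} and gives no proof at all, so there is nothing in the text to compare against; your argument supplies the missing verification and is correct in all three cases. For (1), the sandwich $i+r-1 \leq m_i \leq M_i \leq i+r-1$, obtained by feeding $(r-1)$-neighborliness into (\ref{m-interpr}) and $(r-1)$-Lerayness into (\ref{M-interpr}) and using strict monotonicity of the $m_i$, is exactly what "$r$-linear'' means once unwound. For (2), the reduction to Lemma \ref{d2Neighborly} with $d=2r$ via flat base change $S = S_1[y_1,\ldots,y_k]$ is the standard fact that joining with a simplex leaves the Betti table unchanged, and it is applied correctly. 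For (3), the identification of $I_\K$ as generated by a regular sequence of squarefree monomials of degree $r+1$ with pairwise disjoint supports, together with the Koszul complex, gives a pure resolution concentrated in degrees $(r+1)i$; this is the complete-intersection observation the paper itself makes in the flag case ($r=1$) just after Theorem \ref{PureFlag}, so your argument is the natural generalization. No gaps.
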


However, these conditions are not necessary.  Consider the $2$-dimensional complex $\K$ with vertex set $[7]$ and facets $$\{124, 126, 127, 135, 136, 137, 145, 147, 156, 234,$$ $$235, 236, 245, 257, 267, 346, 347, 357, 456, 467, 567\}.$$  One can verify that $m(\K) = M(\K) = (3,4,6,7)$, $\K$ is Cohen-Macaulay, and $f_{2}(\K) = 12$, but $\K$ does not satisfy any of the conditions above.

\section*{Acknowledgments}
The author wishes to thank Isabella Novik for all her help in preparation of this paper.  Also, the author wishes to thank Uwe Nagel and Ezra Miller for help with Lemma \ref{IncM}.

\end{document}